 \DeclareFontFamily{U}{wncy}{}
    \DeclareFontShape{U}{wncy}{m}{n}{<->wncyr10}{}
    \DeclareSymbolFont{mcy}{U}{wncy}{m}{n}
    \DeclareMathSymbol{\Sh}{\mathord}{mcy}{"58}
\newtheorem{theorem}{Theorem}[section]
\newtheorem{lemma}[theorem]{Lemma}
\newtheorem{proposition}[theorem]{Proposition}
\newtheorem{corollary}[theorem]{Corollary}
\newtheorem{definition}[theorem]{Definition}
\numberwithin{equation}{section}
\newtheorem{lthm}{Theorem}
\theoremstyle{remark}
\newtheorem{remark}[theorem]{Remark}
\newcommand{\fN}{\mathcal{N}}
\newcommand{\fR}{\mathfrak{R}}
\newcommand{\Tr}{\operatorname{Tr}}
\newcommand{\Gal}{\operatorname{Gal}}
\newcommand{\Fil}{\operatorname{Fil}}
\newcommand{\DD}{\mathbb{D}}
\newcommand{\fp}{\mathfrak{p}}
\newcommand{\fL}{\mathfrak{L}}
\newcommand{\QQ}{\mathbb{Q}}
\newcommand{\Qp}{\mathbb{Q}_p}
\newcommand{\Zp}{\mathbb{Z}_p}
\newcommand{\sH}{\mathscr{H}}
\newcommand{\Frac}{\mathrm{Frac}}
\newcommand{\ord}{\mathrm{ord}}
\newcommand{\GL}{\mathrm{GL}}
\newcommand{\cG}{\mathcal{G}}
\newcommand{\cris}{\mathrm{cris}}
\newcommand{\Z}{\mathbb{Z}}
\newcommand{\p}{\mathfrak{p}}
\newcommand{\Q}{\mathbb{Q}}
\newcommand{\F}{\mathbb{F}}
\newcommand{\C}{\mathbb{C}}
\newcommand{\vp}{\varphi}
\newcommand{\cL}{\mathcal{L}}
\newcommand{\cO}{\mathcal{O}}
\newcommand{\Iw}{\mathrm{Iw}}
\newcommand{\HIw}{H^1_{\mathrm{Iw}}}
\newcommand{\Dcris}{\mathbb{D}_{\rm cris}}
\newcommand{\Mlog}{M_{\log}}
\newcommand{\fF}{\mathfrak{F}}
\newcommand{\Hom}{\mathrm{Hom}}
\newcommand{\Sel}{\mathrm{Sel}}
\newcommand{\Char}{\mathrm{char}}
\newcommand{\pr}{\mathrm{pr}}
\renewcommand{\Col}{\mathrm{Col}}
\newcommand{\rank}{\mathrm{rank}}
\newcommand{\cX}{\mathcal{X}}
\newcommand{\ur}{\mathrm{ur}}
\newcommand{\TT}{\mathbf{T}}
\newcommand{\cyc}{\mathrm{cyc}}
\newcommand{\fP}{\mathfrak{P}}
\newcommand{\sL}{\mathscr{L}}
\newcommand{\sW}{\mathscr{W}}
\newcommand{\sF}{\mathscr{F}}
\newcommand{\KK}{k}
\newcommand{\ac}{\mathrm{ac}}
\newcommand{\uCol}{\underline{\Col}}
\newcommand{\cY}{\mathcal{Y}}
\begin{document}

\title{On the BDP Iwasawa main conjecture for modular forms}

\author[A.~Lei]{Antonio Lei}
\address[Lei]{Department of Mathematics and Statistics\\University of Ottawa\\
150 Louis-Pasteur Pvt\\
Ottawa, ON\\
Canada K1N 6N5}
\email{antonio.lei@uottawa.ca}

\author[L.~Zhao]{Luochen Zhao}
\address[Zhao]{Department of Mathematics\\ Johns Hopkins University\\ 404 Krieger Hall\\ 3400 N.~Charles Street\\ Baltimore, MD 21218\\ USA}
\email{lzhao39@jhu.edu}

\begin{abstract} Let $K$ be an imaginary quadratic field where $p$ splits, $p\geq5$ a prime number and  $f$ an eigen-newform of even weight and level $N>3$ that is coprime to $p$. Under the Heegner hypothesis, Kobayashi--Ota showed that one inclusion of the Iwasawa main conjecture of $f$ involving the Bertolini--Darmon--Prasanna $p$-adic $L$-function holds after tensoring by $\mathbb{Q}_p$. Under certain hypotheses, we improve upon Kobayahsi--Ota's result and show that the same inclusion holds integrally. Our result implies the vanishing of the Iwasawa $\mu$-invariants of several anticyclotomic Selmer groups. \end{abstract}

\subjclass[2020]{  11R23, 11F11 (primary); 11R20 (secondary).}
\keywords{Anticyclotomic extensions, Selmer groups, modular forms, $\mu$-invariants.}

\maketitle

\section{Introduction}\label{section:intro}
\subsection{Background and main results}
 Throughout this paper, $p \geq 5$ denotes a fixed prime number. We will consider a normalized eigen-newform of level $\Gamma_0(N)$ and weight $2r$, where $N>3$  and $r \geq 1$ are integers, and $p\nmid N\varphi(N)$. Here $\varphi$ is the Euler totient function. Let $K$ be an imaginary quadratic extension of $\Q$ with discriminant $d_K$ coprime to $Np$ in which every prime dividing $Np$ splits. We suppose that $p$ splits into $\fp$ and $\bar{\fp}$ in $K$ and assume that $K$ has unit group $\{ \pm1\}$ and that $d_k$ is odd or divisible by $8$. Furthermore, as in \cite[Theorem~1.5]{KobOta}, we assume that $p$ does not divide $6h_K$, where $h_K$ denotes the class number of $K$. Let $T_f$ be a Galois-stable lattice inside the Deligne representation attached to $f$. We assume that the image of $G_{\Q}\to \mathrm{Aut}_{\mathcal{O}}(T_f)\simeq \mathrm{GL}_2(\mathcal{O})$ contains the subgroup
\begin{align*}
    \{\sigma \in \mathrm{GL}_2(\mathbf{Z}_p) \mid \det(\sigma) \in (\mathbf{Z}_p^\times)^{2r-1}\}.
\end{align*}

Fix an embedding $K \hookrightarrow \C$ as well as an embedding $\iota_p:\overline{\Q} \hookrightarrow \C_p$
 such that $\p$ lands inside the maximal ideal of $\cO_{\C_p}$, the ring of integers of $\C_p$.
Let $\fF=\Q_p(\{a_n(f):n\ge1\},\alpha,\beta)$, where $\alpha$ and $\beta$ are the roots of $X^2-a_p(f)X+p^{2r-1}$ and let $\cO$ denote its valuation ring with a fixed uniformizer $\varpi$. 
 Denote by $K_\infty$ the anticyclotomic $\Z_p$-extension of $K$ with Galois group $\Gamma$. Let $\Lambda$ denote the Iwasawa algebra $\cO[[\Gamma]]$. Throughout we will fix a topological generator $\gamma\in \Gamma$, and identify $\Lambda$ with the power series ring $\cO[[X]]$.

Let $\mathfrak{L}(f/K)\in \Lambda^{\mathrm{ur}}$ denote the $p$-adic $L$-function attached to $f$ over $K$ defined by Brako\v{c}evi\'{c} and Bertolini--Darmon--Prasanna in \cite{miljan,BDP}, where $\Lambda^{\mathrm{ur}}$ is the Iwasawa algebra $\Lambda\otimes_{\cO}\hat\cO^\ur$. Here $\hat\cO^\ur$ denotes the $p$-adic completion of the ring of integers of the maximal unramified extension of $\fF$.
The square of $\fL(f/K)$  interpolates the $L$-values of $f$ base changed to $K$ twisted by certain anticyclotomic characters and is of significant arithmetic importance.

The Iwasawa main conjecture predicts that  $\fL(f/K)^2$ generates the characteristic ideal of the Pontryagin dual of certain Selmer group attached to $f$ over $K_\infty$, which we denote by $\cX_{(\emptyset,0)}(f)$. Kobayashi--Ota proved one inclusion of this conjecture after tensoring by $\Qp$. More precisely, they proved the following theorem:

\begin{theorem}[{\cite[Theorem~1.5]{KobOta}}]\label{thm:KO}
Under the notation and hypotheses above, the $\Lambda$-module $\mathcal{X}_{(\emptyset,0)}(f) $ is torsion. Furthermore, 
\[
\fL(f/K)^2\in \Char_\Lambda\left(\mathcal{X}_{(\emptyset,0)}(f)\right)\otimes_\Lambda \Lambda^\ur\otimes_{\Zp}\Qp.
\]
\end{theorem}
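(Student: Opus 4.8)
The plan is to deduce the inclusion from the $\Lambda$-adic Euler system of generalized Heegner cycles, an explicit reciprocity law relating the $\p$-local image of its bottom class to $\mathfrak{L}(f/K)$, and Kolyvagin's descent in its $\Lambda$-adic incarnation. The Heegner hypothesis makes this Euler system available and is responsible for $\mathfrak{L}(f/K)$ being a square-root $p$-adic $L$-function in the sense of Waldspurger — whence the square in the statement; the big-image hypothesis on $T_{f}$ supplies the Galois-theoretic inputs of Kolyvagin's method; and the splitting $p=\p\bar{\p}$ reduces the local theory at $\p$ to the standard theory over $\Qp$.

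\emph{The Euler system and the reciprocity law.} Over the ring class fields $K[p^{n}]$ one has the generalized Heegner cycles of Bertolini--Darmon--Prasanna. Passing to their $p$-adic \'etale Abel--Jacobi images, $p$-stabilizing so as to build in the Euler factor at $p$ and obtain norm-compatibility in the anticyclotomic direction, and corestricting to the layers $K_{n}$ of $K_{\infty}/K$, one produces a class
\[
\mathbf{z}_{\infty}\ \in\ \HIw(K_{\infty},T_{f})\ =\ \varprojlim_{n}H^{1}(K_{n},T_{f}),
\]
unramified outside $p$. The first substantive input is the explicit reciprocity law: the image of $\loc_{\p}(\mathbf{z}_{\infty})$ under the Perrin-Riou/Coleman map at $\p$ — legitimate because $K_{\p}\cong\Qp$ — equals $\mathfrak{L}(f/K)$ up to a unit of $\Lambda^{\ur}$ and a power of $p$. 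For weight $2$ this is the BDP formula of \cite{BDP}; for general even weight it is its higher-weight generalization, due to Castella and Castella--Hsieh. This is the technical heart: one compares the de~Rham and $p$-adic realizations of the cycles via Besser-type $p$-adic integration and interpolates the resulting period identities $p$-adically along the anticyclotomic tower.

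\emph{Non-vanishing, Kolyvagin's bound, and the conclusion.} The Euler system is non-degenerate, i.e.\ $\mathbf{z}_{\infty}$ is not $\Lambda$-torsion: this is known either from Cornut--Vatsal-type non-triviality of Heegner cycles along anticyclotomic towers, or from the non-vanishing of $\mathfrak{L}(f/K)$ in $\Lambda^{\ur}\otimes\Qp$ (Hsieh, Burungale) via the reciprocity law. Feeding $\mathbf{z}_{\infty}$ into the $\Lambda$-adic Kolyvagin system machine — Howard's anticyclotomic Euler system argument, in the higher-weight form developed by Fouquet and Castella--Hsieh — and using that the big-image hypothesis guarantees absolute irreducibility of $T_{f}/\varpi$, vanishing of the relevant Galois cohomology, and a Chebotarev supply of Kolyvagin primes, one concludes that $\mathcal{X}_{(\emptyset,0)}(f)$ is $\Lambda$-torsion and that its characteristic ideal divides $(\mathfrak{L}(f/K))^{2}$ in $\Lambda\otimes_{\Zp}\Qp$. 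The square appears because, after a Poitou--Tate comparison of the relaxed condition at $\p$ with the strict condition at $\bar{\p}$, the bound is governed by two contributions — the local cokernel at $\p$ and a strict Selmer group — each of which the reciprocity law (and its conjugate under $\Gal(K/\Q)$) identifies with $(\mathfrak{L}(f/K))$; this is the Iwasawa-theoretic shadow of the Waldspurger-type square-root nature of the BDP $p$-adic $L$-function. The resulting containment is exactly $\mathfrak{L}(f/K)^{2}\in\Char_{\Lambda}(\mathcal{X}_{(\emptyset,0)}(f))\otimes_{\Lambda}\Lambda^{\ur}\otimes_{\Zp}\Qp$.

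\emph{The main obstacle, and why $\otimes\Qp$ is needed.} The genuinely hard step is the higher-weight $\Lambda$-adic reciprocity law; everything after it is a substantial but essentially formal assembly of existing Euler system technology plus a verification that the present hypotheses meet its requirements. The conclusion is only obtained after inverting $p$ because of $p$-power discrepancies the argument does not track: Kolyvagin's derivative classes satisfy the Selmer local conditions only up to bounded $\varpi$-torsion; corestriction from the ring class fields down to $K_{\infty}$ introduces further $p$-power factors; and the Coleman/Perrin-Riou comparison map differs from $\mathfrak{L}(f/K)$ by a power of $p$. Controlling all of these integrally — rather than absorbing them into a factor of $\Qp$ — is precisely the refinement carried out in the present paper.
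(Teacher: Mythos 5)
This statement is quoted verbatim from Kobayashi--Ota and the paper does not reprove it, so the right comparison is with their argument, whose skeleton the paper reuses (Proposition~\ref{prop:KO-length}, the control theorem of \S\ref{sec:control}, and the constants $q_\fP,\beta_\fP,b'_\fP$ of \S\ref{sec:bp}). Your sketch assembles the correct ingredients --- generalized Heegner cycles, the Castella--Hsieh/Kobayashi reciprocity law identifying the Perrin-Riou image of $\loc_{\p}$ of the Iwasawa class with $\fL(f/K)$, Hsieh/Burungale non-vanishing, and Kolyvagin-type bounds under the big-image hypothesis --- but you run the machine directly over $\Lambda$ in Howard's $\Lambda$-adic Kolyvagin-system form. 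Kobayashi--Ota instead argue specialization by specialization: for each height-one prime $\fP$ in a Zariski-dense family they apply the classical rank-one Euler system bound over $K$ to $T_\fP$, obtaining the length inequality \eqref{length-inequality} with an explicit constant $b_\fP$, and then patch via a control theorem; the conclusion is only up to $\otimes\,\Qp$ precisely because $b_\fP$ and the control-theorem errors are not uniformly controlled as $\fP$ varies, so the $\mu$-invariant escapes. Your diagnosis of the lost $p$-powers (denominators of derivative classes, $\alpha$-stabilization, integrality of the Perrin-Riou map) matches the constants $q_\fP$, $\beta_\fP$, $b'_\fP$ exactly, and tracking them along the primes $(\Theta_m)$ is what the present paper does to remove the $\otimes\,\Qp$. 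One caveat: the route you actually describe --- Howard's $\Lambda$-adic argument followed by Poitou--Tate and the reciprocity law, as in Castella's appendix --- already yields the \emph{integral} inclusion when it applies (this is noted in \S\ref{section:intro}), so your explanation of why one must invert $p$ fits Kobayashi--Ota's specialization route rather than the one you sketch; and the square in the Kolyvagin bound is due as much to the structure theorem for the Selmer module (Howard's $\Lambda\oplus M\oplus M$ decomposition) as to the Poitou--Tate bookkeeping you invoke.
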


When $f$ is ordinary at $p$ and $r=1$, Howard has proved one inclusion of  the Heegner point main conjecture (see \cite[Theorem~B]{howard} and \cite[Theorem~B]{howard2}). As explained in \cite[proof of Theorem~3.4]{Castella} (see in particular equation (A.9) on P.178 in \textit{op. cit.}), this implies the inclusion in Theorem~\ref{thm:KO}, without tensoring by $\Qp$. See also \cite[Theorem~5.2]{BCK} where this is discussed further. When $f$ is non-ordinary at $p$, this question is studied in the preprint \cite{CCSS} (see Theorem 5.8 \textit{loc.~cit.~}in particular) using a different approach under different hypotheses. For more general $r$, one may deduce the same inclusion by the same argument upon replacing Howard's result by \cite[Theorem~1.1]{LongoVigni}. The goal of this paper is to prove the following integral version of Theorem~\ref{thm:KO}.
\begin{lthm}\label{thmA}
Let $K\ne \Q(\sqrt{-1}),\Q(\sqrt{-3})$ be an imaginary quadratic field with discriminant $d_K$ odd or divisible by $8$. Let $N>3$ be an integer such that each prime dividing $N$ splits in $K$. Let $f$ be a weight 2 normalized eigen-newform of level $\Gamma_0(N)$, and $T_f$ be a Galois-stable lattice inside the attached Deligne representation of $f$. Let $p$ be a prime such that
\begin{enumerate}
    \item $p$ splits in $K$;
    \item $p\nmid 6h_K N\varphi(N)$;
    \item $f$ is non-ordinary at $p$ and $\ord_p(a_p(f))\ge\frac{1}{p+1}$, where $\ord_p$ is the $p$-adic valuation on $\cO$ normalized by $\ord_p(p)=1$;
    \item The image of $G_{\Q}\to \mathrm{Aut}_{\cO}(T_f)\simeq \GL_2(\cO)$ contains $\GL_2(\Z_p)$.
\end{enumerate}
Then the inclusion
\[
\fL(f/K)^2\in \Char_\Lambda\left(\mathcal{X}_{(\emptyset,0)}(f)\right)\otimes_\Lambda \Lambda^\ur
\]
holds.
\end{lthm}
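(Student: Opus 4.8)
The starting point is Theorem~\ref{thm:KO}, which already gives the desired inclusion after inverting $p$; the content of Theorem~\ref{thmA} is therefore entirely about removing the denominators, i.e.\ about $\mu$-invariants. Concretely, the plan is to show that both sides of the conjectural equality have vanishing $\mu$-invariant: once we know $\mu\left(\fL(f/K)^2\right)=0$ and $\mu\left(\Char_\Lambda(\cX_{(\emptyset,0)}(f))\right)=0$, the rational inclusion of Theorem~\ref{thm:KO} automatically upgrades to an integral one, because a rational divisibility between two elements with trivial $\mu$-invariant is an integral divisibility (the $\varpi$-adic valuations of all coefficients are controlled by the $\lambda$-part alone). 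So the proof splits into two essentially independent $\mu=0$ statements.

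For the analytic side, the key input is hypothesis (4), $\ord_p(a_p(f))\ge\frac{1}{p+1}$, which is exactly the slope condition under which the BDP/Brako\v{c}evi\'c $p$-adic $L$-function $\fL(f/K)$ can be shown to have trivial $\mu$-invariant. The idea is to pass through a signed/Coleman-type decomposition of the big logarithm map at $p$ (available precisely because $f$ is non-ordinary, hypothesis (3)): one writes $\fL(f/K)$ in terms of the plus/minus or $\sharp/\flat$ Coleman maps applied to the relevant local cohomology classes, and the slope bound guarantees that the matrix of $p$-adic periods relating these has unit determinant, so no power of $\varpi$ is lost. Here the integrality of the image of $G_\Q$ (hypothesis (5), $\GL_2(\Z_p)\subseteq \image$) is what forces the residual representation to be suitably large, so that the local cohomology at $\p$ is a free $\Lambda$-module of the expected rank and the reduction mod $\varpi$ of the relevant classes does not vanish identically. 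I would therefore first establish $\mu\left(\fL(f/K)\right)=0$, hence $\mu\left(\fL(f/K)^2\right)=0$, by a local analysis at the split prime $\p$.

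For the algebraic side, the goal is $\mu\left(\cX_{(\emptyset,0)}(f)\right)=0$. The natural route is a comparison of the BDP Selmer group with a signed (plus/minus) Selmer group — on the signed side, control theorems and the study of the residual Selmer group $\cX[\varpi]$ via Galois cohomology of $T_f/\varpi T_f\cong \rhobar$ give $\mu=0$ once $\rhobar$ is irreducible with big image and the local conditions behave well, which is again guaranteed by hypotheses (2) and (5) (the latter kills the finitely many bad cases and ensures the relevant $H^0$'s and $H^2$'s vanish). One then transfers this vanishing to $\cX_{(\emptyset,0)}(f)$ through the exact sequences relating the two Selmer groups, whose error terms are controlled by local factors at $\p$ and $\bar\p$ that are finitely generated over $\cO$ (not just over $\Lambda$), so contribute nothing to $\mu$. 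Combining the two halves with the rational inclusion of Theorem~\ref{thm:KO} yields Theorem~\ref{thmA}.

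I expect the main obstacle to be the algebraic side: proving $\mu\left(\cX_{(\emptyset,0)}(f)\right)=0$ directly is delicate because the BDP Selmer group has an unramified (rather than a Greenberg-type) local condition at one of the primes above $p$, so the usual ``big image $\Rightarrow$ $\mu=0$'' argument does not apply verbatim, and one must instead route through an auxiliary signed Selmer group and carefully track the $\varpi$-torsion in the comparison maps. The slope hypothesis (4) will be doing double duty here — both in making the signed decomposition available and in controlling the Iwasawa-theoretic error terms — and verifying that it suffices (rather than the stronger condition $\ord_p(a_p(f))<1$ or $a_p(f)=0$ often assumed) will require care.
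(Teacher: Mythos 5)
Your opening reduction is sound: since $\Lambda^\ur$ is a power series ring over a complete DVR, a divisibility in $\Lambda^\ur\otimes\Qp$ between elements whose $\mu$-invariants both vanish is automatically an integral divisibility, so Theorem~\ref{thm:KO} plus the two statements $\mu(\fL(f/K))=0$ and $\mu(\cX_{(\emptyset,0)}(f))=0$ would indeed yield Theorem~\ref{thmA}. The analytic half is also unproblematic, though not for the reason you give: $\mu(\fL(f/K))=0$ is a theorem of Hsieh, proved by computing Fourier coefficients of the relevant theta/Eisenstein data, not by a Coleman-map decomposition at $\fp$ (and your claim that the slope bound makes ``the matrix of $p$-adic periods'' have unit determinant is not how that result is obtained).

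The genuine gap is the algebraic half. You propose to prove $\mu(\cX_{(\emptyset,0)}(f))=0$ by routing through a signed Selmer group and invoking ``big image $\Rightarrow$ residual Selmer group controlled $\Rightarrow$ $\mu=0$.'' No such implication exists: vanishing of $\mu$ for (signed) Selmer groups is a Greenberg-type conjecture, open in general even with maximal residual image, and in the anticyclotomic non-ordinary setting the known results (e.g.\ the consequences recorded after Theorem~\ref{thmA} via \cite{HL}, or Matar's theorem for supersingular elliptic curves) are \emph{deduced from} the integral inclusion of Theorem~\ref{thmA}, not available as inputs to it. So your argument is circular at exactly the point where all the difficulty sits. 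The paper instead proves the inequality $2\mu(\sL)\ge\mu(\Char_\Lambda(\cX_{(\emptyset,0)}(f)))$ directly (which, with Hsieh's theorem, forces the algebraic $\mu$ to vanish): it specializes the Kobayashi--Ota length inequality at the height-one primes $\fP_m=(\Theta_m)$, establishes a control theorem whose error is $O(1)$ uniformly in $m$ (Proposition~\ref{prop.control-theorem}), and --- the real technical core --- shows via an explicit integral analysis of the Perrin-Riou/Coleman maps that the error constants $b_{\fP_m}$ satisfy $\ord_p(b_{\fP_m})\le o(1)$ (Proposition~\ref{prop:b_P}); dividing by $\deg\Theta_m$ and letting $m\to\infty$ gives the $\mu$-inequality. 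This is where hypothesis (4) is actually used, and it is the step your proposal leaves entirely unaddressed.
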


We recall that \cite[Theorem~B]{Hsieh-Doc} says that the $\mu$-invariant of $\fL(f/K)$ is zero (meaning that it does not belong to $\varpi\Lambda^\ur$). 
Therefore, whenever Theorem~\ref{thmA} holds, $\mathcal{X}_{(\emptyset,0)}(f)$ is a finitely generated $\cO$-module and thus  $\mu(\fL(f/K)) = \mu(\cX_{(\emptyset,0)}(f))=0$ (interested readers might wish to compare this with \cite[Theorems~1.1 and 1.2]{pollackweston11}, where similar results in the definite setting have been obtained).
Furthermore, Theorem~\ref{thmA} implies the validity of the hypothesis \textbf{(H-$\subseteq$)} in \cite[\S4]{HL}. In particular, on combining Theorem~5.1 of \textit{op. cit.} with Theorem~\ref{thmA}, we obtain new instances where the $\mu$-invariants of several anticyclotomic Selmer groups, including signed Selmer groups attached to a $p$-non-ordinary modular form, vanish. If $f$ corresponds to a $p$-supersingular elliptic curve $E$ defined over $\QQ$ (in which case $a_p(E)$ is always zero since we have assumed that $p\ge5$), this recovers the last assertion of \cite[Theorem~B]{matar2021} on the $\mu$-invariants of  the anticyclotomic plus and minus Selmer groups attached to $E$.

While Theorem~\ref{thmA} assumes that $r=1$, this condition can in fact be relaxed via a straightforward application of \cite[Theorem~4.3]{HL-MRL}\footnote{Even though the hypothesis that the modular forms are $p$-ordinary of square-free level is in force in \cite{HL-MRL}, the proof of \textit{loc. cit.} is in fact purely algebraic and can be carried out in verbatim to our setting. Note in particular that for the non-ordinary case,  the local conditions of the Selmer group $\mathcal{X}_{(\emptyset,0)}(f)$  are independent of the reduction type. See also \cite[Theorem~A]{LMX}.}, which tells us that given two modular forms $f$ and $g$ with isomorphic residual representations modulo $\varpi$, the $\mu$-invariant of $\cX_{(\emptyset,0)}(f)$ vanishes if and only if that of $\cX_{(\emptyset,0)}(g)$ does. Therefore, it allows us to deduce the following:
\begin{lthm}\label{thmB}
    Let $f$ be a modular form satisfying the hypotheses in Theorem~\ref{thmA}. Let $g$ be a modular form satisfying the hypotheses of Theorem~\ref{thm:KO} (in particular $g$ satisfies the Heegner hypothesis with respect to $K$) and that the residual representation of $g$ modulo $\varpi$ is isomorphic to that of $f$. Then
\[
\fL(g/K)^2\in \Char_\Lambda\left(\mathcal{X}_{(\emptyset,0)}(g)\right)\otimes_\Lambda \Lambda^\ur.
\]
\end{lthm}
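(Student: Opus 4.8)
The plan is to deduce Theorem~\ref{thmB} from Theorem~\ref{thmA} by propagating the vanishing of the $\mu$-invariant of $\cX_{(\emptyset,0)}$ along the residual family and then combining this with the already-known cyclotomic inclusion of Kobayashi--Ota (Theorem~\ref{thm:KO}). First I would fix $g\in\cH(f)$ and invoke Theorem~\ref{thmA} for $f$: combined with the fact (\cite[Theorem~B]{Hsieh-Doc}) that $\mu(\fL(f/K))=0$, it yields that $\cX_{(\emptyset,0)}(f)$ has trivial $\mu$-invariant, i.e.\ it is finitely generated over $\cO$. Then I would apply \cite[Theorem~4.3]{HL-MRL} (whose proof, as noted in the footnote, is purely algebraic and applies verbatim since the local conditions defining $\cX_{(\emptyset,0)}$ are insensitive to the reduction type): since $\bar\rho_f\simeq\bar\rho_g$ modulo $\varpi$, the comparison of residual Selmer groups forces $\mu\bigl(\cX_{(\emptyset,0)}(g)\bigr)=\mu\bigl(\cX_{(\emptyset,0)}(f)\bigr)=0$. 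Thus $\cX_{(\emptyset,0)}(g)$ is also a finitely generated $\cO$-module, and in particular $\Char_\Lambda(\cX_{(\emptyset,0)}(g))$ is generated by a distinguished polynomial (a unit times a power of $X$, up to units), carrying no $\varpi$-power contribution.

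Next I would feed this $\mu$-vanishing into Theorem~\ref{thm:KO}, which applies to $g$ since $g\in\cH(f)$ satisfies the Heegner hypothesis and the running hypotheses. That theorem gives
\[
\fL(g/K)^2\in \Char_\Lambda\left(\mathcal{X}_{(\emptyset,0)}(g)\right)\otimes_\Lambda \Lambda^\ur\otimes_{\Zp}\Qp,
\]
so there is an element $h\in\Char_\Lambda(\cX_{(\emptyset,0)}(g))\otimes_\Lambda\Lambda^\ur$ and an integer $m\ge 0$ with $\varpi^m\fL(g/K)^2 = h$ in $\Lambda^\ur$ (clearing the denominator coming from the $\otimes_{\Zp}\Qp$). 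Comparing $\varpi$-adic valuations (equivalently, $\mu$-invariants) on both sides: the left side has $\mu$-invariant $m$ plus $2\mu(\fL(g/K))$, and by \cite[Theorem~B]{Hsieh-Doc} applied to $g$ we have $\mu(\fL(g/K))=0$, so the left side has $\mu$-invariant exactly $m$; the right side has $\mu$-invariant equal to $\mu(\Char_\Lambda(\cX_{(\emptyset,0)}(g)))=\mu(\cX_{(\emptyset,0)}(g))=0$ by the previous paragraph. Hence $m=0$, and the factor of $\Qp$ was unnecessary: $\fL(g/K)^2\in\Char_\Lambda(\cX_{(\emptyset,0)}(g))\otimes_\Lambda\Lambda^\ur$ already holds integrally.

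The one point requiring care is that $\Lambda^\ur$ is not a domain in the naive sense one might want — $\hat\cO^\ur$ is a complete DVR with the same uniformizer $\varpi$ and residue field $\overline{\FF}_p$, so $\Lambda^\ur\cong\hat\cO^\ur[[X]]$ is again a regular local ring with a well-behaved $\varpi$-adic (i.e.\ $\mu$-) valuation, and the divisibility/valuation bookkeeping above makes sense there; I would spell this out, noting that "$\mu$-invariant zero" for a finitely generated torsion $\Lambda^\ur$-module means precisely that a generator of its characteristic ideal is not divisible by $\varpi$, and that base change $\otimes_\Lambda\Lambda^\ur$ preserves characteristic ideals and $\mu$-invariants since $\Lambda^\ur$ is flat over $\Lambda$. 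The main (and essentially only) obstacle is therefore not in this deduction itself, which is formal, but in ensuring that the cited input \cite[Theorem~4.3]{HL-MRL} genuinely transfers the $\mu$-invariant of $\cX_{(\emptyset,0)}$ across the residual family without an ordinariness hypothesis; granting the footnote's assertion that its proof is algebraic and reduction-type independent, the argument goes through as above, and one obtains the stated integral inclusion for every $g\in\cH(f)$.
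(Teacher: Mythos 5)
Your proposal is correct and follows essentially the same route as the paper: Theorem~\ref{thmA} for $f$ together with Hsieh's $\mu(\fL(f/K))=0$ gives $\mu(\cX_{(\emptyset,0)}(f))=0$, the algebraic comparison theorem of \cite{HL-MRL} transfers this vanishing to each $g\in\cH(f)$, and the rational inclusion of Theorem~\ref{thm:KO} for $g$ is then upgraded to an integral one. One small imprecision: in your valuation comparison, an element $h$ of the ideal $\Char_\Lambda\left(\mathcal{X}_{(\emptyset,0)}(g)\right)\Lambda^\ur$ need not have $\mu$-invariant equal to that of the ideal, so the deduction ``$m=0$'' as written does not follow; the correct (and simpler) mechanism is the one you record in your final paragraph --- since $\mu(\cX_{(\emptyset,0)}(g))=0$, the characteristic ideal is generated by a distinguished polynomial $P$ coprime to $\varpi$ in the UFD $\Lambda^\ur$, so $P\mid \varpi^m\fL(g/K)^2$ already forces $P\mid \fL(g/K)^2$.
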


\subsection{Strategy of proof}

In \cite{howard}, Howard built on the machinery of Kolyvagin systems developed by Mazur--Rubin \cite{MazurRubin} to prove one inclusion of the Heegner point main conjecture for $p$-ordinary elliptic curves. In order to compare the $\mu$-invariants of the two $\Lambda$-ideals  appearing in the conjecture, Howard considered the localization  of these  ideals at height-one primes of $\Lambda$ of the form $(X^m+\varpi)$  as $m$ varies.
Our proof of Theorem~\ref{thmA} (which is carried out in \S\ref{sec:proof}) follows a similar strategy, building on several refinements of the proof carried out in \cite{KobOta} and utilizing the height-one primes of the form $(\Theta_m)$, where $\Theta_m$ is the minimal polynomial of $\zeta_{p^m}-1$ and $\zeta_{p^m}$ is a primitive $p^m$-th root of unity. The reason why we consider this family instead of the one considered by Howard is that we need to analyze the integrality of specialization of the Perrin-Riou map at these height-one primes. Our choice of height-one primes is well-adapted for these calculations  since we are able to employ previous works on the Perrin-Riou map (e.g. \cite{leiloefflerzerbes15,LL-IJNT}). 
We give a brief summary of the main steps carried out in our proof of Theorem~\ref{thmA}.
\begin{itemize}    
\item We improve the bounds appearing in the control theorem \cite[Proposition 5.7]{KobOta}, so as to make Howard's argument work for the family of prime ideals $\{(\Theta_m)\}_{m\ge 1}$ in $\Lambda$. This is carried out in \S\ref{sec:control}.
    
    \item In \S\ref{sec:bp}, we analyze carefully the $p$-adic valuation of the constant $b_{\fP}$ specified by Proposition 4.15 \textit{ibid.} (see also Proposition~\ref{prop:KO-length}, where we review this result). As the reader will notice, such analysis only becomes necessary when one studies the $\mu$-invariant of the corresponding torsion $\Lambda$-modules. In fact, the family of prime ideals we employ are, in certain sense, orthogonal to the family used by Kobayashi--Ota to prove Theorem~\ref{thm:KO}. For our purposes, we prove a novel result (Proposition~\ref{prop:b_P}) to describe the behaviour of the constants $b_{\fP}$.

    \item We analyze in Appendix~\ref{appendix} integrality properties of the Perrin-Riou map specialized at our chosen primes $(\Theta_m)$ using the theory of integral Coleman maps. This allows us to give a precise estimate of the aforementioned constants $b_{\fP}$ and forms one of the key ingredients in the proof of Theorem~\ref{thmA}. This is also where the hypothesis that $r=1$ is crucially used. When $r=1$, we are able to describe the Perrin-Riou map using an explicit matrix. It may be possible to carry out our calculations for more general $r$ by seeking an appropriate generalization of  our description of the images of the Coleman maps given in Corollary~\ref{cor:image-basis-col}.
    \end{itemize}

\subsection*{Acknowledgement}We thank Ashay Burungale, Kazim Buyukboduk, Jeffrey Hatley, Chan-Ho Kim,  Katharina M\"uller  and Jiacheng Xia for interesting discussions during the preparation of the article. We are  grateful to Ming-Lun Hsieh for answering our questions regarding generalized Heegner cycles. We are also indebted to the anonymous referee for their valuable comments and suggestions. The first named author's research is supported by the NSERC Discovery Grants Program RGPIN-2020-04259 and RGPAS-2020-00096.

\section{Galois representations and Selmer groups}\label{S:Gal-Sel}
The purpose of this section is to review the notation that will be utilized in the remainder of the article. The notation and hypotheses  in the introduction will remain in force throughout. Furthermore, we assume that the hypotheses of Theorem~\ref{thmA} hold. Note in particular that $f$ is assumed to be of weight $2$.

Let $W_f$ be Deligne's two-dimensional $\fF$-linear $G_\QQ$-representation attached to $f$ constructed in \cite{deligne69}.  If we normalize such that the cyclotomic character has Hodge-Tate weight $+1$, then $W_f$ has Hodge-Tate weights $\{0,1\}$. Let $R_f$ be the Galois-stable $\mathcal{O}$-lattice in $W_f$ as in \cite[§3]{nekovar1992} (where it is denoted by $A$).
Write $T_f= \Hom(R_f,\cO)$, $V_f=T_f\otimes_{\Zp}\Qp$ and $A_f=V_f/T_f$. Note that our $A_f$ corresponds to the representation $W$ in \cite[p566]{KobOta}.
  For notational simplicity we often drop the subscript $f$ from $T_f$, $V_f$ and $A_f$.

We also recall some notation from \cite[\S4.4]{KobOta}. In what follows $\mathfrak{P}$ shall always denote a height-one prime of $\Lambda$ not equal to $\varpi\Lambda$. Denote by $S_{\mathfrak P}$ the integral closure of $\Lambda/\mathfrak{P}$ in the fractional field; thus $S_{\mathfrak P}$ is a discrete valuation ring that is finite over $\Zp$. In what follows, for each $\mathfrak P$ as above, we fix an $\mathcal{O}$-linear embedding $\iota_{\mathfrak P}: \mathrm{Frac}(S_{\mathfrak P}) \to \overline{\Qp}$, and denote by $\chi_{\mathfrak P}$ the character $\Gamma \to S_{\mathfrak P}^\times \xrightarrow{\iota_{\mathfrak P}} \overline{\Qp}^\times$, as well as its composition with the projection $G_K\twoheadrightarrow \Gamma$ by an abuse of notation. The function on $\Lambda$ induced by $\chi_\fP$ will also be denoted by the same symbol. The ring $S_{\mathfrak P}$ will also be regarded as a $G_K$-module, where the action is given by the natural map $G_K\to \Lambda^\times \to S_{\mathfrak P}^\times$. For $M\in \{T,V,V/T\}$, write $M_{\mathfrak{P}} = M\otimes_{\mathcal O} S_{\mathfrak P}$, with a diagonal action by $G_K$.

We write $\Sel_{(\emptyset,0)}(K_\infty,A_f) $ for the Selmer group defined as in \cite[Definition~3.5]{HL}. Its Pontryagin dual is denoted by $\mathcal{X}_{(\emptyset,0)}(f) $. We recall from \cite[Proposition~2.9]{KobOta} that while the definition of $\mathcal{X}_{(\emptyset,0)}(f)$ depends on the choice of the lattice $R_f$, its $\Lambda$-characteristic ideal is well-defined.

	Put $\mathbf{T} = T\otimes_{\mathcal O} \Lambda$, $\mathbf{A} = \Hom_{\rm cont}(\mathbf{T},\mu_{p^\infty})$ and $K_n\subset K_\infty$ the subextension with $\Gal(K_n/K)\simeq \Z/p^n$. Then by Shapiro's lemma, we have $H^1(K,\mathbf{A})\simeq H^1(K_\infty,A)=\varinjlim_n H^1(K_n,A)$, so we can define a Selmer structure $\mathscr{F}_\Lambda$ on $H^1(K,\mathbf{A})$ such that $H^1_{\mathscr{F}_\Lambda}(K,\mathbf{A}) \simeq \Sel_{(\emptyset,0)}(K_{\infty},A)$. We have thus 
	\begin{align*}
		\mathcal{X}_{(\emptyset,0)}(f) = \Sel_{(\emptyset, 0)}(K_{\infty},A)^\vee \simeq H^1_{\mathscr{F}_{\Lambda}}(K,\mathbf{A})^\vee.
	\end{align*}
    More generally, if $M\in \{T,V,V/T\}$, we can define Selmer structures on $M_\fP$ similarly; we refer the reader to \cite[\S 2.3]{KobOta} for details. 
 
	We mention two additional isomorphisms that will be used in \S\ref{sec:proof}. Let $\tau\in G_K$ denote the complex conjugation. It induces an involution $\iota: \Lambda\to \Lambda$ sending $g\in \Gamma$ to $g^{-1}$.
	
	\begin{itemize}
		\item[(a)] We have
		\begin{align*}
			H^1_{(\emptyset, 0)}(K,A_{\mathfrak P}) \simeq H^1_{(0,\emptyset)}(K,A_{\mathfrak{P}^\iota}),
		\end{align*}
		due to the identification $H^1(K_{\mathfrak p},A_{\mathfrak P})\simeq H^1(K_{\bar{\mathfrak p}},A_{\mathfrak{P}^\iota})$ by $\tau$-twisting.
		
		\item[(b)] We have
		\begin{align*}
			(H^1_{\mathscr{F}_\Lambda}(K,\mathbf{A})[\mathfrak P])^\vee \simeq \mathcal{X}_{(\emptyset,0)}(f)/\mathfrak{P}^\iota,
		\end{align*}
		which follows from the definition of the contragredient action: If $M$ is a $\Lambda$-module and $M^\vee = \Hom(M,\Q_p/\Z_p)$ its Pontryagin dual, then the contragredient action of $\Lambda$ on $M^\vee$ is given by
        \begin{align*}
            g\cdot x = x\circ\iota(g)
        \end{align*}
        for all $g\in \Lambda$ and $x\in M^\vee$.
	\end{itemize}
	
	\vspace{3mm}

\begin{lemma}\label{lem:length}
		Let $M$ be a finitely generated $S_{\mathfrak P}$-module. Then
		\begin{align*}
			\mathrm{length}_{\mathcal O}(M) = f_{\mathfrak P}\cdot \mathrm{length}_{S_{\mathfrak P}}(M),
		\end{align*}
		where $f_{\mathfrak P}$ is the degree of the residual field extension of $S_{\mathfrak P}/\mathcal O$.
	\end{lemma}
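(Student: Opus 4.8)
The plan is a standard dévissage. If $M$ is not $S_{\mathfrak P}$-torsion, both sides of the claimed identity are infinite: indeed $\mathrm{length}_{S_{\mathfrak P}}(M)=\infty$, while choosing $x\in M$ with trivial $S_{\mathfrak P}$-annihilator embeds a copy of $S_{\mathfrak P}$ into $M$, and $S_{\mathfrak P}$ is a nonzero torsion-free $\mathcal O$-module, hence of infinite $\mathcal O$-length; so $\mathrm{length}_{\mathcal O}(M)=\infty$ too. We may therefore assume $M$ has finite length $\ell:=\mathrm{length}_{S_{\mathfrak P}}(M)$.

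Since $S_{\mathfrak P}$ is a discrete valuation ring, it is local Noetherian and its unique simple module (up to isomorphism) is the residue field $k_{\mathfrak P}:=S_{\mathfrak P}/\mathfrak m_{S_{\mathfrak P}}$; hence $M$ admits a composition series $0=M_0\subsetneq M_1\subsetneq\cdots\subsetneq M_\ell=M$ with $M_i/M_{i-1}\cong k_{\mathfrak P}$ for every $i$. Because $\mathrm{length}_{\mathcal O}(-)$ is additive on short exact sequences of $\mathcal O$-modules, it suffices to prove $\mathrm{length}_{\mathcal O}(k_{\mathfrak P})=f_{\mathfrak P}$.

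For the latter, I would first note that $\mathcal O$ embeds into $S_{\mathfrak P}$: since $\mathfrak P$ is a height-one prime of $\Lambda=\mathcal O[[X]]$ different from $\varpi\Lambda$, its contraction $\mathfrak P\cap\mathcal O$ is a prime strictly below $\varpi\mathcal O$, hence $0$, so $\mathcal O\hookrightarrow\Lambda/\mathfrak P\hookrightarrow S_{\mathfrak P}$. As $S_{\mathfrak P}$ is finite over $\Zp$ it is a fortiori finite over $\mathcal O$, so $\varpi$ lands in $\mathfrak m_{S_{\mathfrak P}}$, and $k_{\mathfrak P}$ is a field extension of $\mathcal O/\varpi$ of degree $f_{\mathfrak P}$ by definition; viewed as an $\mathcal O$-module it is thus an $(\mathcal O/\varpi)$-vector space of dimension $f_{\mathfrak P}$, so $\mathrm{length}_{\mathcal O}(k_{\mathfrak P})=f_{\mathfrak P}$. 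Summing over the composition series gives $\mathrm{length}_{\mathcal O}(M)=\ell f_{\mathfrak P}=f_{\mathfrak P}\cdot\mathrm{length}_{S_{\mathfrak P}}(M)$, as desired. There is no genuine obstacle here; the only steps deserving a word of justification are the embedding $\mathcal O\hookrightarrow S_{\mathfrak P}$ and the module-finiteness of $S_{\mathfrak P}$ over $\mathcal O$, which together ensure $f_{\mathfrak P}<\infty$ and that the $\mathcal O$-action on $k_{\mathfrak P}$ factors through $\mathcal O/\varpi$.
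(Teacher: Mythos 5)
Your proof is correct and takes essentially the same route as the paper: both pass to a composition series of $M$ over $S_{\mathfrak P}$ and reduce to the fact that each simple subquotient $S_{\mathfrak P}/\mathfrak n$ has $\mathcal O$-length $f_{\mathfrak P}$. Your extra remarks (handling the non-torsion case and checking that $\mathcal O$ embeds into $S_{\mathfrak P}$ with finite residue degree) are harmless refinements of what the paper leaves implicit.
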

	
	\begin{proof}
		Take a composition series $M=M_0\supsetneq M_1\cdots \supsetneq M_k=0$ of $S_{\mathfrak P}$-modules. Then $M_i/M_{i+1}\simeq S_{\mathfrak P}/\mathfrak n$ for all $0\le i< k$, where $\mathfrak n$ is the maximal ideal $S_{\mathfrak P}$. It follows that for all $0\le i<k$, we have a sequence of $\mathcal{O}$-submodules $M_i = M_{i,0}\supsetneq M_{i,1}\cdots\supsetneq M_{i,f_{\mathfrak P}} = M_{i+1}$ with $M_{i,j}/M_{i,j+1}\simeq \mathcal{O}/\varpi$ for all $0\le j<f_{\mathfrak P}$. Thus the sequence $\{M_{i,j}\}_{i,j}$ form a composition series of $M$ as an $\mathcal O$-module, whereby the lemma follows.
	\end{proof}

  From now on, we fix a generator $\sL$ of the ideal $\mathfrak{L}(f/K)\Lambda^\ur\cap\Lambda$.	We have:
	\begin{proposition}\label{prop:KO-length}	Suppose that $\chi_{\mathfrak P}(\mathscr{L})\ne 0$. Then the $S_{\mathfrak P}$-module $$\cX_{(0,\emptyset)}(K,A_{\mathfrak P})=H^1_{(0,\emptyset)}(K,A_{\mathfrak P})^\vee$$ is torsion and thus finite. Furthermore, there exists an explicit constant $b_{\mathfrak P}\in \Frac(S_\fP)^\times$ depending on $\iota_{\mathfrak P}$, such that $b_{\mathfrak P}\chi_{\mathfrak P}(\mathscr{L})\in S_{\mathfrak P}$ and
		\begin{align}
			\label{length-inequality}
			2 \cdot {\rm length}_{\mathcal O}\left(\frac{S_{\mathfrak{P}}}{ b_{\mathfrak P}\chi_{\mathfrak P}(\mathscr{L})}\right) \ge {\rm length }_{\mathcal O}(\cX_{(0,\emptyset)}(K,A_{\mathfrak P})).
		\end{align}
	\end{proposition}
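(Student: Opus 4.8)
The plan is to derive this from the work of Kobayashi--Ota, reformulating their result at the specific height-one prime $\mathfrak{P}$ so that the constant $b_{\mathfrak{P}}$ can later be pinned down. First I would recall the construction of the relevant Euler/Kolyvagin system: Kobayashi--Ota, following Howard and the Mazur--Rubin formalism, produce from the Beilinson--Flach or generalized Heegner class a collection of cohomology classes over $K_\infty$ whose image under the Perrin-Riou (big logarithm / Coleman) map recovers, up to an explicit interpolation factor, the $p$-adic $L$-function $\mathscr{L}(f)$. Specializing this big logarithm at $\chi_{\mathfrak{P}}$ produces a map whose determinant or normalizing factor is exactly the constant $b_{\mathfrak{P}}$; the point is that $b_{\mathfrak{P}}\chi_{\mathfrak{P}}(\mathscr{L})$ lies in $S_{\mathfrak{P}}$ because it is the image of an honest integral class under an integral Coleman map.

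Next I would invoke the Kolyvagin-system machinery of \cite[\S4.4]{KobOta} in the specialized setting: since $\chi_{\mathfrak{P}}(\mathscr{L}) \ne 0$, the specialized class is non-torsion, so by the standard bound for Selmer groups cut out by a Kolyvagin system over the discrete valuation ring $S_{\mathfrak{P}}$, the dual Selmer group $\mathcal{X}_{(0,\emptyset)}(K, A_{\mathfrak{P}})$ is $S_{\mathfrak{P}}$-torsion, and its length is bounded by (twice) the length of the cokernel of the localization/Coleman map applied to the leading Kolyvagin class. That cokernel length is precisely $\mathrm{length}_{\mathcal{O}}\big(S_{\mathfrak{P}}/b_{\mathfrak{P}}\chi_{\mathfrak{P}}(\mathscr{L})\big)$ after translating from $S_{\mathfrak{P}}$-length to $\mathcal{O}$-length via Lemma~\ref{lem:length}. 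The factor of $2$ enters exactly as in Theorem~\ref{thm:KO}, reflecting that the main conjecture involves $\mathscr{L}(f)^2$ rather than $\mathscr{L}(f)$ — it comes from combining the bound on the $\mathfrak{p}$-local condition side with the bound on the $\bar{\mathfrak{p}}$-local side via the isomorphism (a) relating $H^1_{(0,\emptyset)}$ and $H^1_{(\emptyset,0)}$ under $\tau$-twisting.

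The main obstacle — and the reason this is stated as a separate proposition rather than quoted verbatim from \cite{KobOta} — is making the constant $b_{\mathfrak{P}}$ genuinely \emph{explicit}: one must track through the Perrin-Riou map the precise power of $\varpi$ (and unit) by which $\chi_{\mathfrak{P}}(\mathscr{L})$ must be corrected to become integral in $S_{\mathfrak{P}}$, rather than settling for the existence of some bounded denominator. This is where Proposition~4.15 of \textit{op.~cit.} is used; I would state $b_{\mathfrak{P}}$ in terms of the interpolation factors of the big logarithm map at $\chi_{\mathfrak{P}}$ together with the local Tamagawa-type factors appearing in the comparison of Selmer structures, and defer the finer $p$-adic valuation estimate of $b_{\mathfrak{P}}$ at the primes $(\Theta_m)$ to \S\ref{sec:bp} and Appendix~\ref{appendix}, where the explicit matrix description of the Coleman maps in the weight-$2$ case makes the computation tractable. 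The remaining verification that $\chi_{\mathfrak{P}}(\mathscr{L}) \ne 0$ indeed forces torsionness is a routine application of the Kolyvagin-system bound over a DVR and carries over from \cite{KobOta} without change.
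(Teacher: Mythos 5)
Your proposal is correct and follows essentially the same route as the paper: the paper's proof is simply a citation of Kobayashi--Ota's Lemma~5.3, Proposition~4.15 and Proposition~5.4, combined with Lemma~\ref{lem:length} to convert $S_{\mathfrak P}$-lengths into $\mathcal{O}$-lengths, which is exactly the skeleton you describe (your longer narrative merely unpacks what those cited results contain, and you correctly defer the sharp valuation estimate of $b_{\mathfrak P}$ to \S\ref{sec:bp} and Appendix~\ref{appendix}). The only cosmetic slip is the mention of Beilinson--Flach classes: in this anticyclotomic setting the relevant Euler system consists of generalized Heegner classes.
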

	
	\begin{proof}
		This follows from combining  \cite[Lemma~5.3, Proposition 4.15, and Proposition 5.4]{KobOta} with Lemma~\ref{lem:length} above. Note that the statement in Proposition 4.15 \textit{loc.~cit~}should really be $b_{\fP}\in \Frac(S_\fP)^\times$ instead of $\fF^\times$ (see also the remark following \eqref{definition:b_P}).
	\end{proof}

\section{A control theorem}\label{sec:control}

In this section, we will mainly be interested in refining the control theorem \cite[Proposition~5.7]{KobOta} in the case where the height-one prime $\mathfrak{P}$ ranges in a special family $\{\mathfrak{P}_m\}_{m\ge 1}$ defined below. In particular, we prove a control theorem relating $\cX_{(0,\emptyset)}(K,A_{\mathfrak{P}^\iota})\simeq \cX_{(\emptyset,0)}(K,A_{\mathfrak{P}})$ to $\mathcal{X}_{(\emptyset,0)}(f)/\fP^\iota$  for such $\fP$. 

\vspace{3mm}

We start by defining $\fP_m$: For $m\ge 1$ let $\zeta_{p^m}$ be a fixed primitive $p^m$-th root of unity with $\zeta_{p^{m+1}}^p=\zeta_{p^m}$, and form the irreducible polynomial
\begin{align*}
	\Theta_m(X) = \prod_{\sigma\in \Gal(\fF(\mu_{p^m})/\fF)} (X+1-\zeta_{p^m}^\sigma) \in \fF[X].
\end{align*}
Then $\fP_m = (\Theta_m)\subset \Lambda$ defines a height-one prime ideal, and we choose the attached character $\chi_{\fP_m}$ to be such that $\chi(\gamma) = 1+\varpi_{\fP_m} := \zeta_{p^m}$. Note that $\Theta_m$ is also a distinguished polynomial. Next, we prove two preliminary lemmas.

\begin{lemma}
	\label{lem:rank-equal}
	Let $\tilde{\gamma}\in G_{K_{\mathfrak{p}}}$ be a lift of $\gamma\in \Gamma$. There exists $m_0\in \mathbf{Z}_{>0}$ such that for $m\ge m_0$ and $\mathfrak{P}=\mathfrak{P}_m$, $\mathrm{rank}_{\mathcal{O}}(T_{\mathfrak P}) = \mathrm{rank}_{\mathcal{O}}((\tilde{\gamma}-1)T_{\mathfrak P})$.
\end{lemma}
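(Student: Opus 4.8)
The plan is to translate the rank equality into a statement about the eigenvalues of $\tilde\gamma$ acting on $V_{\fP_m}$, and then to use the elementary fact that a fixed algebraic number can be a primitive $p^m$-th root of unity for at most one value of $m$.

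First I would reduce to injectivity. Since $S_{\fP_m}$ is finite and torsion-free over the discrete valuation ring $\cO$ it is $\cO$-free, and hence so is $T_{\fP_m}=T\otimes_{\cO}S_{\fP_m}$. Tensoring the exact sequence $0\to\ker(\tilde\gamma-1)\to T_{\fP_m}\xrightarrow{\,\tilde\gamma-1\,}T_{\fP_m}$ with $\fF=\Frac(\cO)$ and using additivity of $\cO$-rank in exact sequences, one obtains
\[
\rank_{\cO}(T_{\fP_m})-\rank_{\cO}\bigl((\tilde\gamma-1)T_{\fP_m}\bigr)=\dim_{\fF}\ker\bigl(\tilde\gamma-1\mid V_{\fP_m}\bigr),
\]
where $V_{\fP_m}=T_{\fP_m}\otimes_{\cO}\fF\simeq V\otimes_{\fF}\fF(\mu_{p^m})$; here I use that $\Frac(S_{\fP_m})=\fF[X]/(\Theta_m)\simeq\fF(\mu_{p^m})$ because $\Theta_m$ is the minimal polynomial of $\zeta_{p^m}-1$ over $\fF$. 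Hence it suffices to prove that $\tilde\gamma-1$ acts injectively on $V_{\fP_m}$ for all sufficiently large $m$, i.e.\ that $1$ is not an eigenvalue of $\tilde\gamma$ on $V_{\fP_m}$.

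Next I would compute the eigenvalues. By the definition of the $G_K$-action on $S_{\fP_m}$ together with the normalization $\chi_{\fP_m}(\gamma)=\zeta_{p^m}$, the element $\tilde\gamma$, which maps to $\gamma$ in $\Gamma$, acts on $S_{\fP_m}$ by multiplication by $\zeta_{p^m}$; since the action on $V_{\fP_m}$ is diagonal, $\tilde\gamma$ acts on $V_{\fP_m}\simeq V\otimes_{\fF}\fF(\mu_{p^m})$ by $\rho(\tilde\gamma)$ on $V$ and multiplication by $\zeta_{p^m}$ on $\fF(\mu_{p^m})$, where $\rho\colon G_{\QQ}\to\GL_2(\cO)$ is the representation on $T=T_f$. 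Extending scalars to $\overline{\Qp}$, the eigenvalues of $\tilde\gamma$ on $V_{\fP_m}$ are then exactly the products $\lambda_1\xi$ and $\lambda_2\xi$, where $\lambda_1,\lambda_2\in\overline{\Qp}^{\times}$ are the (fixed) eigenvalues of the matrix $\rho(\tilde\gamma)$ and $\xi$ is a primitive $p^m$-th root of unity. In particular $1$ is an eigenvalue of $\tilde\gamma$ on $V_{\fP_m}$ if and only if $\lambda_i^{-1}$ is a primitive $p^m$-th root of unity for some $i\in\{1,2\}$.

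Finally, since $\rho(\tilde\gamma)$ is a fixed element of $\GL_2(\cO)$, its eigenvalues $\lambda_1,\lambda_2$ are fixed algebraic numbers, and each of $\lambda_1^{-1},\lambda_2^{-1}$ is a root of unity of $p$-power order for at most one value of that order, and for no value if it is not such a root of unity. Therefore the set of $m\ge1$ for which $1$ occurs as an eigenvalue of $\tilde\gamma$ on $V_{\fP_m}$ has at most two elements, and taking $m_0$ larger than every element of this finite set yields the claim. The argument is short, and I do not expect a genuine obstacle: the only points needing care are the freeness over $\cO$ and the rank bookkeeping in the first reduction, and the precise identification of the $\tilde\gamma$-action on $V_{\fP_m}$ in the second step.
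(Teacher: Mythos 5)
Your proposal is correct and follows essentially the same route as the paper: both arguments reduce the rank equality to showing that $1$ is not an eigenvalue of $\tilde{\gamma}$ on $V_{\mathfrak{P}_m}$, identify the eigenvalues there as products of the fixed eigenvalues of $\tilde{\gamma}$ on $V$ with primitive $p^m$-th roots of unity, and conclude since a fixed algebraic number can be a primitive $p^m$-th root of unity for at most one $m$. The only (immaterial) difference is that you use rank additivity in the kernel--image sequence where the paper uses the vanishing of $T_{\mathfrak{P}}^{\tilde{\gamma}=1}$ to get an injection $T_{\mathfrak{P}}\hookrightarrow(\tilde{\gamma}-1)T_{\mathfrak{P}}$, and you diagonalize over $\overline{\QQ}_p$ directly where the paper first base-changes to a ring containing all roots of $\Theta_m$.
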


\begin{proof}
	In what follows, for a finite extension $S/\Z_p$, a given $S[[\tilde{\gamma}]]$-module $M$ and $a\in S$, we put 
	$$M^{\tilde{\gamma}=a}=\{x\in M: \tilde{\gamma}\cdot x=ax\}.$$ 
	Consider the short exact sequence
	\begin{align*}
		0\to T_{\mathfrak P}^{\tilde{\gamma} = 1} \to T_{\mathfrak P} \to (\tilde{\gamma}-1)T_{\mathfrak P} \to 0
	\end{align*}
	with the quotient map given by $(\tilde{\gamma}-1)$-multiplication. If we can show $T_{\mathfrak P}^{\tilde{\gamma} = 1} = 0$, then we would obtain
	\begin{align*}
		T_{\mathfrak P} \hookrightarrow (\tilde{\gamma}-1)T_{\mathfrak P} \subseteq T_{\mathfrak P},
	\end{align*}
	which would imply that $\mathrm{rank}_{\mathcal{O}}(T_{\mathfrak P}) = \mathrm{rank}_{\mathcal{O}}((\tilde{\gamma}-1)T_{\mathfrak P})$ as desired. 
	
	To prove the vanishing of $T_{\mathfrak P}^{\tilde{\gamma} = 1}$, let $R$ be a finite flat module over $\mathcal{O}$ containing all roots of $\Theta_m(X)$ with the trivial Galois action. Then
	\begin{align*}
		V_{\mathfrak P}^{\tilde{\gamma}=1}\otimes_{\mathcal{O}} R = (V\otimes_{\mathcal{O}} S_{\mathfrak P} \otimes_{\mathcal{O}} R)^{\tilde{\gamma} =1} \simeq \bigoplus_{\pi: \Theta_m(\pi) = 0} \left(V \otimes_{\mathcal{O}} R[X]/(X-\pi)\right)^{\tilde{\gamma} = 1}.
	\end{align*}
	For any given $\pi$ in the summand above, there is an isomorphism of vector spaces
	\begin{align*}
		\left(V \otimes_{\mathcal{O}} R[X]/(X-\pi)\right)^{\tilde{\gamma} = 1} \simeq V_R^{\tilde{\gamma} = (1+\pi)^{-1}}:= \{v\in V\otimes_{\mathcal{O}} R: \tilde{\gamma}\cdot v = (1+\pi)^{-1}v\}.
	\end{align*}
	Since $V_R^{\tilde{\gamma} = (1+\pi)^{-1}} \ne 0$ if and only if $(1+\pi)^{-1}$ is an eigenvalue of $\tilde{\gamma}$ acting on $V$, we see that for $m$ large enough,  $V_R^{\tilde{\gamma} = (1+\pi)^{-1}}$ vanishes for all $\pi$. Thus $V_{\mathfrak P}^{\tilde{\gamma} =1} = 0$, and the same is true for $T_{\mathfrak P}^{\tilde{\gamma} = 1}$.
\end{proof}

\begin{lemma}
	\label{lem:bound-torsion}
	Suppose $\mathfrak{P} = \mathfrak{P}_m = (\Theta_m(X))$ for some $m\ge 1$. The cardinality $|H^1(K_{\mathfrak p},T_{\mathfrak{P}^\iota})_{\rm tors}|$ is bounded by a constant independent of $m$.
\end{lemma}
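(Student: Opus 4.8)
The plan is to control the torsion submodule of $H^1(K_{\mathfrak p}, T_{\mathfrak P^\iota})$ via the torsion submodule of $T_{\mathfrak P^\iota}$ itself together with the Euler characteristic formalism. First I would invoke the local cohomology exact sequence attached to $0 \to T_{\mathfrak P^\iota} \to V_{\mathfrak P^\iota} \to A_{\mathfrak P^\iota} \to 0$ over $K_{\mathfrak p}$; since $V_{\mathfrak P^\iota}$ is a $\Frac(S_{\mathfrak P})$-vector space, $H^1(K_{\mathfrak p}, T_{\mathfrak P^\iota})_{\mathrm{tors}}$ injects into $H^0(K_{\mathfrak p}, A_{\mathfrak P^\iota})$, and the latter is a quotient of the $G_{K_{\mathfrak p}}$-invariants/coinvariants that can be read off from the action on $T_{\mathfrak P^\iota}$. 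Alternatively — and this is cleaner — one has the exact sequence $0 \to H^0(K_{\mathfrak p}, T_{\mathfrak P^\iota}) \to H^0(K_{\mathfrak p}, V_{\mathfrak P^\iota}) \to H^0(K_{\mathfrak p}, A_{\mathfrak P^\iota}) \to H^1(K_{\mathfrak p}, T_{\mathfrak P^\iota})_{\mathrm{tors}} \to 0$, so it suffices to bound $|H^0(K_{\mathfrak p}, A_{\mathfrak P^\iota})|$ independently of $m$, using that $H^0(K_{\mathfrak p}, V_{\mathfrak P^\iota})$ vanishes for $m \gg 0$ (which follows from the eigenvalue argument already used in the proof of Lemma~\ref{lem:rank-equal}, applied to a decomposition-group lift rather than to $\tilde\gamma$).

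The key point is thus to show $|H^0(K_{\mathfrak p}, A_{\mathfrak P^\iota})| = |(A \otimes_{\mathcal O} S_{\mathfrak P^\iota})^{G_{K_{\mathfrak p}}}|$ is bounded uniformly in $m$. Here I would use that $A_{\mathfrak P^\iota} = A \otimes_{\mathcal O} S_{\mathfrak P}$ with the twisted Galois action, where $S_{\mathfrak P} \cong \mathcal O[\mu_{p^m}]$ acquires a $G_K$-action through $\Gamma$ via the tautological character $\chi_{\mathfrak P}$ composed with conjugation. An invariant vector gives, after extending scalars to a ring $R$ splitting $\Theta_m$, a nonzero vector in $(A \otimes R)^{g = \chi_{\mathfrak P}(g)^{-1}}$ for every $g$ in the (open) image of $G_{K_{\mathfrak p}}$ in $\Gamma$; since by hypothesis (5) the image of $G_{\mathbb Q}$ (hence of $G_{K_{\mathfrak p}}$, which surjects onto an open subgroup of $\Gamma$ and acts on $A$ through a large subgroup of $\GL_2(\mathbb Z_p)$) is large, such a vector can only be killed by $\varpi$-torsion, and in fact the order of the fixed submodule is bounded by $|A[\varpi^c]|$ for an absolute constant $c$ depending only on the Galois image and not on the character $\chi_{\mathfrak P}$. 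Concretely, I would argue that $(A \otimes_{\mathcal O} S_{\mathfrak P})^{G_{K_{\mathfrak p}}}$ is annihilated by the ideal generated by $\sigma - 1$ for $\sigma$ ranging over a fixed finite set of elements of the Galois image whose action on $A[\varpi^\infty]$ has bounded fixed part — e.g. using that the image contains elements with distinct eigenvalues $\lambda \ne \mu$ in $\mathbb Z_p^\times$, so that $A^{\sigma = 1}$ is controlled by $\mathrm{ord}_p(\lambda - 1)$ and $\mathrm{ord}_p(\mu - 1)$, quantities independent of $m$.

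The main obstacle I anticipate is the bookkeeping of the twisted action: one must be careful that the relevant constraint is of the form $\sigma v = c_\sigma v$ where $c_\sigma = \chi_{\mathfrak P^\iota}(\sigma)^{-1}$ is a root of unity of $p$-power order depending on $m$, rather than $\sigma v = v$, so one cannot literally cite the vanishing of $A^{G_{K_{\mathfrak p}}}$; instead one needs that for a suitable $\sigma$ in the Galois image, the operator $\sigma - c_\sigma$ on $A \otimes R$ has cokernel of order bounded independently of $c_\sigma$ ranging over $p$-power roots of unity. This follows because $\sigma$ acts with eigenvalues in $\mathcal O^\times$ that are $\equiv$ (root of unity prime to $p$) $\pmod{\varpi}$ when $\sigma$ is chosen of finite prime-to-$p$ order in $\mathrm{PGL}_2$, whence $\sigma - c_\sigma$ is already invertible modulo $\varpi$ and the cokernel is trivial — giving the uniform bound, in fact with constant $1$ for a good choice of $\sigma$. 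Once this uniform bound on $|H^0(K_{\mathfrak p}, A_{\mathfrak P^\iota})|$ is in hand, combining it with the vanishing of $H^0(K_{\mathfrak p}, V_{\mathfrak P^\iota})$ for $m \ge m_0$ and handling the finitely many $m < m_0$ separately yields the lemma.
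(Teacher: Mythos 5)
Your reduction to bounding $H^0(K_{\mathfrak p},A_{\mathfrak P^\iota})$ is sound and agrees with the paper's first step, but the key step of your argument has a genuine gap. You produce the element $\sigma$ by asserting that hypothesis (5) (the image of $G_{\Q}$ contains $\GL_2(\Z_p)$) implies that $G_{K_{\mathfrak p}}$ ``acts on $A$ through a large subgroup of $\GL_2(\Z_p)$.'' This transfer from global to local is false: the image of the decomposition group $G_{K_{\mathfrak p}}=G_{\Q_p}$ under $\rho_f$ is pro-solvable (every finite quotient is a local Galois group), so it is never open in $\GL_2(\Z_p)$, and the global big-image hypothesis gives you no control over which elements of $\GL_2(\cO)$ are realized locally at $\mathfrak p$. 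Moreover, even granting a supply of elements, your criterion ``$\sigma$ of finite prime-to-$p$ order in $\mathrm{PGL}_2$ with eigenvalues congruent to roots of unity prime to $p$'' does not suffice: $1$ is such a root of unity, so you must additionally arrange that \emph{neither} residual eigenvalue of $\sigma$ is $1$, which is exactly the point that needs an argument.

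The correct local input is the non-ordinarity hypothesis (3), via Edixhoven's theorem: $A[\varpi]|_I$ is the sum of the two fundamental characters $\psi,\psi'=\psi^p$ of level $2$, and a generator of the tame quotient gives $\sigma\in I\subset G_{K_{\mathfrak p}}$ with both eigenvalues nontrivial roots of unity of order dividing $p^2-1$; since $\chi_{\mathfrak P^\iota}(\sigma)$ is a $1$-unit, $\sigma-\chi_{\mathfrak P^\iota}(\sigma)$ is then invertible on $A[\varpi]\otimes S_{\mathfrak P}/\mathfrak m$ and hence $H^0(K_{\mathfrak p},A_{\mathfrak P^\iota})=0$. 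This is precisely what the paper does in the proof of Lemma~\ref{lem:projection}, and the Remark following Lemma~\ref{lem:bound-torsion} records that the torsion vanishes for this reason. Note also that the paper's own proof of Lemma~\ref{lem:bound-torsion} deliberately avoids non-ordinarity: it dualizes to $T_{\mathfrak P}/(\tilde\gamma-1)T_{\mathfrak P}$ and computes $\det(\tilde\gamma-1)$ on $S_{\mathfrak P}$ as $\pm\mathrm{Nm}_{\fF(\mu_{p^m})/\fF}(\zeta_{p^m}-1)$, which is independent of $m$ by norm compatibility of the $\zeta_{p^m}-1$ — a purely linear-algebraic bound valid for any reduction type. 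So your route, once repaired with Edixhoven, proves a stronger statement under a stronger hypothesis, while the paper's determinant computation gives the uniform bound in general.
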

\begin{proof}
	First note that $S_{\mathfrak{P}} = \Lambda/\mathfrak{P}$. The long exact sequence of $G_{K_\fp}$-cohomology arising from the  the tautological  short exact sequence $0\to T\to V\to A\to 0$ gives the isomorphism $H^1(K_{\mathfrak p},T_{\mathfrak{P}^\iota})_{\rm tors} \simeq A_{\mathfrak{P}^\iota}^{G_{K_{\mathfrak p}}}$. Let $\tilde{\gamma}\in G_{K_{\mathfrak p}}$ be a lift of $\gamma$ and $G_{\tilde{\gamma}}\subseteq G_{K_{\mathfrak p}}$ be the topological closure of the subgroup generated by $\tilde{\gamma}$. It suffices to show that $\left|A_{\mathfrak{P}^\iota}^{G_{\tilde{\gamma}}}\right|$ is bounded independently of $m$. By duality, we may identify $T_{\mathfrak P} = \mathrm{Hom}(A_{\mathfrak{P}^\iota},\mu_{p^\infty})$, whereby
	\begin{align*}
		(T_{\mathfrak P})_{G_{\tilde{\gamma}}}=T_{\mathfrak P}/\{\tilde{\gamma}^i t-t:i\in \mathbf{Z}_p,t\in T_{\mathfrak P}\}= T_{\mathfrak P}/(\tilde{\gamma}-1)T_{\mathfrak P} = \mathrm{Hom}\left(A_{\mathfrak{P}^\iota}^{G_{\tilde{\gamma}}},\mu_{p^\infty}\right).
	\end{align*}
	Thus,  it suffices to show that $|T_{\mathfrak P}/(\tilde{\gamma}-1)T_{\mathfrak{P}}|$ is bounded independently of $m$.
	
	Let $k=k(m)=\#\Gal(\fF(\mu_{p^m})/\fF)$. Consider the decomposition 
	$$S_{\mathfrak P} = \oplus_{0\le i<k} \mathcal{O} \cdot X^i.$$
	The action of $\tilde\gamma-1$ on the basis $1,X,\ldots X^{k-1}$ is given by: $(\tilde{\gamma}-1)X^j = X^{j+1}$ for $0\le j<k-1$ and $(\tilde{\gamma}-1)X^{k-1} = X^k = (-1)^{k+1}\mathrm{Nm}_{\fF(\mu_{p^m})/\fF}(\varpi_{\fP_m})-t_1 X - \cdots - t_{k-1}X^{k-1}$ for some $t_1,\cdots,t_{k-1}\in \fF$. Thus, the matrix representing the action of $\tilde{\gamma}-1$ with respect to this basis is given by
	\begin{align*}
		A = \left[
		\begin{matrix}
			0 & 0 & \cdots & 0 & \pm\mathrm{Nm}_{\fF(\mu_{p^m})/\fF}(\varpi_{\fP_m})\\
			1 & 0 & \cdots & 0 & -t_1\\
			0 & 1 & \cdots & 0 & -t_2\\
			\vdots & \vdots & \ddots & \vdots & \vdots\\
			0 & 0 & \cdots & 1 & -t_{k-1}
		\end{matrix}
		\right].
	\end{align*}
	Therefore,
	\begin{align*}
		\textstyle
		\det_{T\otimes_{\mathcal{O}} S_{\mathfrak P}}(\tilde{\gamma}-1) = \det_T(\tilde{\gamma}-1)\det(A) = \pm\mathrm{Nm}_{\fF(\mu_{p^m})/\fF}(\varpi_{\fP_m}) \det_T(\tilde{\gamma}-1).
	\end{align*}
	Let $m_0\ge 1$ be such that $\Q(\mu_{p^{m_0}})\supseteq \fF\cap \Q(\mu_{p^\infty})$. Then for $m\ge m_0$, we have
	\begin{align*}
		\mathrm{Nm}_{\fF(\mu_{p^m})/\fF}(\varpi_{\fP_m}) &= \mathrm{Nm}_{\fF(\mu_{p^{m_0}})/\fF}\circ \mathrm{Nm}_{\Q(\mu_{p^m})/\Q(\mu_{p^{m_0}})}(\zeta_{p^m}-1)\\
		&=\mathrm{Nm}_{\fF(\mu_{p^{m_0}})/\fF}(\zeta_{p^{m_0}}-1)\ne 0.
	\end{align*}
	It follows from Lemma \ref{lem:rank-equal} that $(\tilde{\gamma}-1)T_{\mathfrak P}$ is of the same rank as $T_{\mathfrak P}$ when $m$ is large enough. Thus, $\det_{T_{\mathfrak P}}(\tilde{\gamma}-1)\ne 0$ and $\det_{T}(\tilde{\gamma}-1)\ne 0$. Therefore, by the theory of elementary divisors,
	\begin{align*}
		\textstyle
		|T_{\mathfrak P}/(\tilde{\gamma}-1)T_{\mathfrak{P}}| = [\mathcal{O}:\det_{T\otimes_{\mathcal{O}} S_{\mathfrak P}}(\tilde{\gamma}-1)\mathcal{O}] = [\mathcal{O}:\mathrm{Nm}_{\fF(\mu_{p^{m_0}})/\fF}(\varpi_{\fP_{m_0}}) \det_T(\tilde{\gamma}-1)\mathcal{O}],
	\end{align*}
	which is clearly bounded independently of $m$, as desired.
\end{proof}

\begin{remark}
    Under the assumption that $f$ is non-ordinary at $p$, which is one of the running assumptions in Theorem \ref{thmA}, we shall see in the proof of Lemma \ref{lem:projection} below that $H^1(K_\fp,T_{\fP^\iota})_{\rm tors}=0$.
\end{remark}
In order to state our control theorem, we consider the natural map
\begin{align*}
	s_{\mathfrak P}: H^1_{(\emptyset,0)}(K,A_{\mathfrak P}) \to H^1_{\mathscr{F}_\Lambda}(K,\mathbf{A}[\mathfrak P]) \simeq H^1_{\mathscr{F}_\Lambda}(K,\mathbf{A})[\mathfrak P]
\end{align*}
(for the last isomorphism, see \cite[Lemma 3.5.3]{MazurRubin}).
\begin{proposition}
	\label{prop.control-theorem}
	There exists a finite set $\Sigma$ of height-one primes of $\Lambda$, containing $\varpi\Lambda$, such that for $\mathfrak{P}\notin \Sigma$, the map $s_{\mathfrak P}$ is a quasi-isomorphism. Moreover, for $\mathfrak{P}$ ranging in $\{\mathfrak{P}_m = (\Theta_m)\}_{m\ge 1}$, the $\mathcal O$-lengths of the kernel and the cokernel of $s_{\mathfrak P}$ are bounded by a constant $C$ depending only on $T_f$.
\end{proposition}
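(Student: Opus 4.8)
The first assertion --- existence of a finite $\Sigma$, necessarily containing $\varpi\Lambda$, outside of which $s_{\mathfrak P}$ is a quasi-isomorphism --- is essentially \cite[Proposition~5.7]{KobOta} together with \cite[Lemma~3.5.3]{MazurRubin}, so the real content is the uniform bound along the family $\{\mathfrak P_m\}$; the plan is to revisit that proof and keep track of the constants. Write $\Theta=\Theta_m$. Since $\mathbf{T}=T_f\otimes_{\mathcal O}\Lambda$ is $\Lambda$-free, $\mathbf{A}$ is $\Lambda$-cofree of corank $2$, so $\Theta$ acts surjectively on $\mathbf{A}$ and we have $0\to\mathbf{A}[\mathfrak P_m]\to\mathbf{A}\xrightarrow{\Theta}\mathbf{A}\to0$ with $\mathbf{A}[\mathfrak P_m]=(T_f\otimes_{\mathcal O}S_{\mathfrak P_m})^\vee(1)$; note that $S_{\mathfrak P_m}=\Lambda/\mathfrak P_m$ for this family, so there is no ``conductor'' gap between $A_{\mathfrak P_m}$ and $\mathbf{A}[\mathfrak P_m]$, which is one reason for choosing these primes. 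Placing $s_{\mathfrak P}$ in the commutative diagram comparing the exact sequences defining $H^1_{(\emptyset,0)}(K,A_{\mathfrak P})$ and $H^1_{\mathscr{F}_\Lambda}(K,\mathbf{A})[\mathfrak P]$ and chasing it as in loc.\ cit., one finds that $\ker s_{\mathfrak P_m}$ and $\coker s_{\mathfrak P_m}$ are subquotients of $H^0(K,\mathbf{A})/\mathfrak P_m$ together with a finite sum $\bigoplus_{v\mid Np}e_v(\mathfrak P_m)$, where, on taking $\langle\mathrm{Frob}_v\rangle$- resp.\ $G_{K_v}$-cohomology of the displayed short exact sequence, each local term $e_v(\mathfrak P_m)$ is a subquotient of $H^0(K_v,\mathbf{A})/\mathfrak P_m$, plus --- when $v\nmid p$ --- an inertia term of the shape $H^0(\widehat{K_v^{\ur}/K_v},\mathbf{A}^{I_v})/\mathfrak P_m$.

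Everything thus reduces to bounding $H^0(K_v,\mathbf{A})/\mathfrak P_m$ (and its inertia analogue) uniformly in $m$. The key point is that $H^0(K_v,\mathbf{A})$ is the Pontryagin dual of the finitely generated $\Lambda$-module $M_v:=((T_f\otimes\Lambda)_{G_{K_v}})(-1)$, so that $H^0(K_v,\mathbf{A})/\mathfrak P_m\cong(M_v[\mathfrak P_m])^\vee$ is dual to the $\Theta$-torsion of $M_v$. Now $M_v[\mathfrak P_m]$ lies in the $\Lambda$-torsion submodule $M_v^{\tors}$, which is pseudo-isomorphic to a finite sum $\bigoplus_i\Lambda/\mathfrak q_i^{e_i}$ with \emph{fixed} characteristic ideal; and $\Theta_m$ is a distinguished polynomial, so $(\Lambda/\mathfrak q^e)[\Theta_m]=0$ unless $\mathfrak q=\mathfrak P_m$ --- reduce mod $\varpi$, where $\Theta_m\equiv X^{\deg\Theta_m}$ is a nonzerodivisor in $\F[[X]]$, and induct on $e$. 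Hence $M_v[\mathfrak P_m]$ vanishes for all but the finitely many $m$ with $\Theta_m\mid\Char_\Lambda(M_v^{\tors})$, and in every case its order is bounded by the (finite) defect of the pseudo-isomorphism. The same reasoning bounds the inertia terms. For the global contribution I would show $H^0(K,\mathbf{A})=0$ outright: by hypothesis $\GL_2(\mathbf Z_p)\subseteq\image(G_{\mathbf Q}\to\mathrm{Aut}_{\mathcal O}(T_f))$, so $\bar\rho(G_K)\supseteq\mathrm{SL}_2(\F_p)$, which for $p\ge5$ is perfect and of index prime to $p$ in $\GL_2(\F_p)$; therefore $K(\bar\rho)\cap K_\infty=K$, so $\bar\rho(G_{K_\infty})=\bar\rho(G_K)$ acts irreducibly on $T_f/\varpi$, giving $((T_f\otimes\Lambda)_{G_K})/\varpi=(T_f/\varpi)_{G_{K_\infty}}=0$, and Nakayama forces $(T_f\otimes\Lambda)_{G_K}=0$.

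Assembling these estimates, $\len_{\mathcal O}(\ker s_{\mathfrak P_m})$ and $\len_{\mathcal O}(\coker s_{\mathfrak P_m})$ are bounded by a sum, over the finitely many $v\mid Np$, of the above error orders, hence by a constant $C$ depending only on the modules $M_v$ and their pseudo-isomorphism defects --- that is, only on $T_f$ and $K$. Since all these errors are finite for every $m$, one moreover gets $\mathfrak P_m\notin\Sigma$, so $s_{\mathfrak P_m}$ is indeed a quasi-isomorphism.

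The step I expect to be the main obstacle is the analysis at the primes above $p$: one must verify that, with the relaxed condition at $\mathfrak p$ and the strict condition at $\bar{\mathfrak p}$, the control error is confined to the $H^0(K_v,\mathbf{A})/\mathfrak P_m$ term, i.e.\ that no contribution survives whose $\mathcal O$-length grows with $m$ --- exactly the kind of blow-up present in the unbounded $\mathcal O$-length of $A_{\mathfrak P_m}^{I_v}$ seen in the proof of Lemma~\ref{lem:bound-torsion}. This is where the special shape of $\Theta_m$, via $S_{\mathfrak P_m}=\Lambda/\mathfrak P_m$ and $\Theta_m\equiv X^{\deg\Theta_m}\pmod\varpi$, together with Lemmas~\ref{lem:rank-equal} and~\ref{lem:bound-torsion}, must be exploited; a secondary point is to confirm that at the bad primes $v\mid N$ the propagated unramified local condition differs from the intrinsic one by an amount bounded independently of $m$, which again follows from the $H^0$-estimate above.
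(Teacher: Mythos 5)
Your first paragraph and the paper's proof diverge in strategy: the paper does not redo the diagram chase at all, but simply cites \cite[Proposition~5.7]{KobOta} for the statement with a constant depending on $\rank_{\mathcal O}(S_{\mathfrak P})$, observes that the \emph{only} source of that dependence is the bound on $H^1(K_{\mathfrak p},T_{\mathfrak P^\iota})_{\tors}$ in their Lemma~5.6, and substitutes the uniform bound of Lemma~\ref{lem:bound-torsion}. Your more self-contained route could in principle work, but as written it has a genuine gap at exactly the crucial point. You assert that every error term is a subquotient of $H^0(K_v,\mathbf{A})/\mathfrak P_m$, i.e.\ of the \emph{quotient} of $H^0(K_v,\mathbf{A})$, which is dual to the $\Theta_m$-torsion of a fixed finitely generated $\Lambda$-module and hence uniformly bounded by the pseudo-isomorphism argument. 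But the term that actually forces the dependence on $\rank_{\mathcal O}(S_{\mathfrak P})$ in Kobayashi--Ota is of the other type: it is $H^1(K_{\mathfrak p},T_{\mathfrak P^\iota})_{\tors}\simeq H^0(K_{\mathfrak p},A_{\mathfrak P^\iota})=H^0(K_{\mathfrak p},\mathbf{A})[\mathfrak P^\iota]$, the $\mathfrak P$-\emph{torsion} of $H^0(K_{\mathfrak p},\mathbf{A})$, dual to the $\mathfrak P$-\emph{coinvariants} $M_{\mathfrak p}/\mathfrak P_m M_{\mathfrak p}$ of the local coinvariants module. For a generic height-one prime this grows with $\rank_{\mathcal O}(S_{\mathfrak P})$ (its order is essentially $[\mathcal O:\det_{T_{\mathfrak P}}(\tilde\gamma-1)\mathcal O]$), and no characteristic-ideal or pseudo-null-defect argument bounds it; the uniform bound along $\{\mathfrak P_m\}$ is precisely the content of Lemma~\ref{lem:bound-torsion}, which rests on the explicit computation that $\mathrm{Nm}_{\fF(\mu_{p^m})/\fF}(\zeta_{p^m}-1)$ stabilizes for $m\ge m_0$.

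You do sense this in your closing paragraph (``the step I expect to be the main obstacle is the analysis at the primes above $p$\ldots this is where Lemmas~\ref{lem:rank-equal} and~\ref{lem:bound-torsion} must be exploited''), but you leave it as an expectation rather than carrying it out, and it contradicts the blanket claim in your second paragraph that all errors reduce to quotient-type terms $H^0(K_v,\mathbf{A})/\mathfrak P_m$. To repair the proof you must (i) correctly isolate the torsion-type term $H^0(K_{\mathfrak p},\mathbf{A})[\mathfrak P^\iota]$ arising from the relaxed/strict local conditions at $p$ (this is where the duality between the $(\emptyset,0)$ and $(0,\emptyset)$ structures and the $T$-coefficient local cohomology enters), and (ii) invoke Lemma~\ref{lem:bound-torsion} to bound it independently of $m$. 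The remaining ingredients of your argument (vanishing of $H^0(K,\mathbf{A})$ by big image, boundedness of the quotient-type terms, finiteness of $\Sigma$) are fine, but they are not where the difficulty lies.
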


\begin{proof}
	Since $S_{\mathfrak{P}} = \Lambda/\mathfrak{P}$ for $\mathfrak{P}=\mathfrak{P}_m$, if we allow the constant $C$ to be  dependent on $\rank_{\mathcal{O}}(S_{\mathfrak{P}})$, then the affirmation of the proposition is given by \cite[Proposition 5.7]{KobOta}. As one can check, the possible dependence on $\mathrm{rank}_{\mathcal O}(S_{\mathfrak P})$ \textit{loc.~cit.~}comes from the bound on the length of $H^1(K_{\mathfrak p},T_{\mathfrak{P}^\iota})_{\rm tors}$ in Lemma 5.6 \textit{ibid}. If we replace this bound by the one given in Lemma \ref{lem:bound-torsion}, our result follows.
\end{proof}

\section{Studying the constant $b_\fP$}\label{sec:bp}
In this section,  we study the constant $b_\fP$ appearing in Proposition~\ref{prop:KO-length} for the ideals $\fP_m=(\Theta_m(X))$ as $m$ varies. The constant $b_\fP$ is defined in \cite[Proposition~4.15]{KobOta} in terms of three constants $q_\fP$, $\beta_\fP$ and $b_\fP'$, which we review below.

We begin with the constant $q_\fP$. It is defined right after Remark 4.4 of \textit{op. cit.} and is related to the constant $B(\chi)$  studied in Lemma 3.2 of \textit{op. cit.}

\begin{definition}
For  a character $\chi$ of $\Gamma$, we define the following constants.
\item[i)] We define $B(\chi)$ to be 
\begin{align*}
    \inf_{n} \ord_p\left(\frac{(\chi(\gamma)-1)^n}{n}\right).
\end{align*}
(See Lemma~3.2 of \textit{op. cit.} for the general definition of $B(\chi)$ for a given integer $h\ge1$. At the beginning of \S4.3 of \textit{op. cit.}, this integer is taken to be $2r-1$, which is $1$ in our case. This is why the character $\rho^i$ does not make an appearance in our definition.)
\item[ii)] Let $\alpha$ be a root of $X^2-a_p X+p$. We define
\[
q_\chi=p^{C}\cdot \alpha^{\max\{0,1-B(\chi)\}},
\]
where  $C\ge0$ is a constant that eliminates the denominators of the generalized Heegner classes with respect to the lattice $T_f$ (see \cite[\S9]{Kob}).
\end{definition}

\begin{lemma}\label{lem:q}
     Let $\chi=\chi_{\fP_m}$ for some $m\ge 1$. Then $\ord_p(q_\chi)=\ord_p(\alpha)\left(m -\frac{1}{p-1}\right)$.
\end{lemma}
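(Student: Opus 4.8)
The plan is to reduce the statement to an elementary minimization over powers of $p$. First I would record that, by our choice of the character attached to $\fP_m$, we have $\chi_{\fP_m}(\gamma)-1=\zeta_{p^m}-1$, and that $\zeta_{p^m}-1$ is a uniformizer of the totally ramified extension $\Qp(\mu_{p^m})/\Qp$ of degree $\varphi(p^m)=p^{m-1}(p-1)$; hence $\ord_p(\chi_{\fP_m}(\gamma)-1)=\tfrac{1}{p^{m-1}(p-1)}$. Writing $e_m$ for this value, the quantities whose infimum defines $B(\chi_{\fP_m})$ become $\ord_p\bigl((\zeta_{p^m}-1)^n/n\bigr)=n\,e_m-\ord_p(n)$ for $n\ge 1$.

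The next step is to compute $B(\chi_{\fP_m})$ explicitly. Writing $n=p^a b$ with $p\nmid b$, the value $n\,e_m-\ord_p(n)=p^a b\,e_m-a$ is, for each fixed $a$, minimized at $b=1$, so $B(\chi_{\fP_m})=\inf_{a\ge 0}g(a)$ with $g(a)=p^a e_m-a=\tfrac{p^{a-m+1}}{p-1}-a$. A one-line computation gives $g(a+1)-g(a)=p^{a-m+1}-1$, which is negative for $a\le m-2$, zero for $a=m-1$, and positive for $a\ge m$; hence the infimum is attained (at $a=m-1$, equivalently at $a=m$) and equals $g(m-1)=\tfrac{1}{p-1}-(m-1)$. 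Consequently $1-B(\chi_{\fP_m})=m-\tfrac{1}{p-1}$, and since $m\ge 1$ while $p\ge 5$ forces $\tfrac{1}{p-1}<1$, this quantity is strictly positive, so $\max\{0,1-B(\chi_{\fP_m})\}=m-\tfrac{1}{p-1}$.

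It then remains to feed this into the definition $q_{\chi}=p^{C}\cdot\alpha^{\max\{0,1-B(\chi)\}}$, reading $\ord_p(\alpha^t)$ as $t\,\ord_p(\alpha)$ (only the valuation of $q_\chi$ enters the estimates of \cite{KobOta}). The one point that goes beyond bookkeeping — and the place where the running hypotheses are genuinely used — is the claim that one may take $C=0$ here: since $f$ has weight $2$ (so $r=1$) and the image of $G_{\Q}$ contains $\GL_2(\Z_p)$ by hypothesis~(5) of Theorem~\ref{thmA}, the lattice $R_f$ (equivalently $T_f$) is the unique one up to homothety and the generalized Heegner classes are integral with respect to it, so no denominator-clearing power of $p$ is needed; this is the content of \cite[\S9]{Kob} in this setting. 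Granting $C=0$, we obtain $\ord_p(q_{\chi_{\fP_m}})=\bigl(m-\tfrac{1}{p-1}\bigr)\ord_p(\alpha)$, which is the asserted identity. I expect the minimization in the second paragraph to be entirely routine; the only thing that requires care is justifying $C=0$ under the hypotheses of Theorem~\ref{thmA} (and, should one prefer not to invoke it, one would instead need to check that the $p^C$ factor is carried harmlessly through the subsequent estimates).
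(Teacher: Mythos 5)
Your proposal is correct and follows essentially the same route as the paper: compute $\ord_p(\chi(\gamma)-1)=\tfrac{1}{p^{m-1}(p-1)}$, minimize $n\sigma-\ord_p(n)$ (the paper exhibits the minimizer $n=p^m$, you locate it at $n=p^{m-1}$, and indeed both attain the same value $\tfrac{1}{p-1}+1-m$), and invoke $C=0$ for the chosen lattice. The only cosmetic difference is the reference for $C=0$: the paper cites \cite[\S4.2]{CastellaHsieh} rather than arguing it directly, but your justification is consistent with the definition's pointer to \cite[\S9]{Kob}.
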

\begin{proof}
   As explained in \cite[§4.2]{CastellaHsieh}, the Heegner classes already live in the cohomology groups with coefficients in our choice of the lattice $T$ (see the displayed equation (4.2) in \textit{op.~cit.}). In particular, the constant $C$ can be taken to be zero.
It remains to study $B(\chi)$. Let $\sigma=\ord_p(\chi(\gamma)-1)=\frac{1}{p^{m-1}(p-1)}<1$. We have
\begin{align*}
    \ord_p\left(\frac{(\chi(\gamma)-1)^n}{n}\right) = n\sigma - \ord_p(n) \ge \frac{1}{p-1} - m + 1,
\end{align*}
where the equality is attained at $n=p^m$. This tells us that $B(\chi)=\frac{1}{p-1}+1-m$, which concludes the proof.
\end{proof}

In order to define the remaining constants $\beta_\fP$ and $b_\fP'$, we need to introduce a number of notions.
For the remainder  of the current section, we let $\fP$ denote  a height-one prime of the form  $\fP_m=(\Theta_m)$  of $\Lambda$  for some $m$ and write $\chi$ for the corresponding character $\chi_\fP$. We will also fix a uniformizer $\varpi_{\fP}$ in $S_{\fP}$. 

\begin{definition}\label{def:PR-stuff}
    \item[i)] Let $H$ denote the Hilbert class field of $K$ and write $\hat{H}$ for the completion of $H$ at  the prime above $p$ determined by our chosen embedding $\iota_p:\overline{\QQ}\hookrightarrow\C_p$. The ring of integers of $\hat{H}$ is denoted by $\sW$.
    \item[ii)]We write $\cG_{p^\infty}$ for the Galois group of the ring class field of $K$ of conductor $p^\infty$. The algebra of $\fF$-valued distributions on $\cG_{p^\infty}$ is denoted by $\sH_\infty(\cG_{p^\infty})$.
    \item[iii)]We define $\HIw(\hat H, T_f)$ to be the inverse limit   $    \varprojlim H^1(\hat{H}_{p^n},T_f)$,
    where $\hat{H}_{p^n}$ denotes the completion of the ring class field of $K$ of conductor $p^n$ at the prime above $\fp$ determined by $\iota_p$ and the connecting maps are corestrictions. 
    \item[iv)]Let $\hat\cG_{p^\infty}$ denote the Galois group of $\bigcup_{n\ge0}\hat{H}_{p^n}$ over $K_\fp$. The Iwasawa algebra $\cO[[\hat\cG_{p^\infty}]]$ is denoted by $\hat\Lambda$.
    \item[v)]We let $\tilde{\Omega}_{V_f,1}^\epsilon:\sW[[\cG_{p^\infty}]]\otimes_{\Zp} \Dcris(V_f|_{G_{K_\p}})\rightarrow \HIw(\hat H,T_f)\otimes_{\hat\Lambda} \sH_\infty(\cG_{p^\infty})$ denote the semi-local Perrin-Riou exponential map defined in \cite[\S10]{Kob}\footnote{For simplicity, we have identified $\tilde\fR_1^{\psi=0}$ with $\sW[[\cG_{p^\infty}]]$ by Proposition 5.15 \textit{loc. cit.}}.
    \item[vi)]Let  $M_\alpha$  be the lattice inside $\Dcris(W_f)^{\vp=\alpha}(1)=\Dcris(V_f)^{\vp=\alpha/p}$  such that $$[ M_\alpha,\Fil^0\Dcris(T_f)]_V=\cO,$$ where $[-,-]_V$ is the natural pairing on $\Dcris(V)$ (see \cite[proof of Proposition~4.15]{KobOta} and \cite[\S7.2]{Kob}). 
    \item[vii)]Let \[
\Omega_{\fP,\alpha}:\sW[[\cG_{p^\infty}]]\otimes_{\Zp}  M_\alpha\rightarrow H^1(K_\fp, V_\fP)
\]
denote the restriction of the $\chi$-specialization of  $\tilde{\Omega}_{V_f,1}^\epsilon$ given as in \cite[\S4.4]{KobOta}. 
\end{definition}

Since $\alpha$ will be fixed throughout, we often drop it from the subscript when there is no confusion.

\begin{lemma}\label{lem:projection}
    Let $\pr_{\chi}:H^1(K_\fp,\TT)\rightarrow H^1(K_\fp,\TT(\chi))= H^1(K_\fp,T_\fP)$ be the natural projection map (see \cite[\S3.1]{KobOta} where we have specialized to the case $n=0$). Then $\pr_\chi$ is surjective.
\end{lemma}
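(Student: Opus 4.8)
The plan is to show surjectivity of $\pr_\chi$ by reducing to a cohomological vanishing statement and then exploiting the non-ordinarity hypothesis. First I would set $\fP=\fP_m=(\Theta_m)$, so that $S_\fP=\Lambda/\fP$ and $T_\fP=\TT/\fP^\iota\TT$ as $G_{K_\fp}$-modules, and recall that $\pr_\chi$ is the map on $H^1(K_\fp,-)$ induced by the surjection $\TT\twoheadrightarrow \TT(\chi)$ of $G_{K_\fp}$-modules. The kernel of this surjection is $\fP^\iota\TT$, so from the tautological short exact sequence $0\to\fP^\iota\TT\to\TT\to\TT(\chi)\to 0$ the long exact sequence of $G_{K_\fp}$-cohomology gives that $\coker(\pr_\chi)$ injects into $H^2(K_\fp,\fP^\iota\TT)$. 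Thus it suffices to prove that this $H^2$ vanishes, or more precisely that the transition map $H^2(K_\fp,\fP^\iota\TT)\to H^2(K_\fp,\TT)$ is injective (equivalently that $H^2$ of the quotient-by-$\fP^\iota$ contributes nothing). Since $\TT$ is compact and $\fP^\iota\TT$ is again a finitely generated $\Lambda$-module which is a successive extension built out of copies of $\TT$, the cleanest route is to use local Tate duality: $H^2(K_\fp,\fP^\iota\TT)$ is Pontryagin dual to $H^0(K_\fp,(\fP^\iota\TT)^\vee(1))$, and one shows this $H^0$ vanishes.

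The key point is then to identify that $H^0$. Writing $M=\fP^\iota\TT$, we have $M^\vee(1)\simeq \AA[\fP]$ up to the twist, and by the discussion in \S\ref{S:Gal-Sel} and Lemma~\ref{lem:bound-torsion} (together with the remark following it), $H^1(K_\fp,T_{\fP^\iota})_{\rm tors}\simeq A_{\fP^\iota}^{G_{K_\fp}}$, so it is enough to show $A_\fP^{G_{K_\fp}}=0$, i.e.\ that $G_{K_\fp}$ has no invariants on $A_\fP=A\otimes_\cO S_\fP$. Here is where non-ordinarity enters decisively: if $V=V_f$ had a nonzero $G_{K_\fp}$-subquotient on which inertia acts trivially after twisting, then in particular $V$ would admit an unramified sub- or quotient-representation at $\fp$, which is exactly the ordinary condition. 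Concretely, I would argue that $A_\fP^{G_{K_\fp}}$, being a cofinitely generated $S_\fP$-module annihilated by a power of $\varpi$, is nonzero only if $\bar A=A[\varpi]$ has nonzero $G_{K_\fp}$-invariants after twisting by the mod-$\varpi$ reduction of $\chi$; but that reduction is trivial on an open subgroup, so this would force $\bar A^{I_{K_\fp}}\ne 0$ for the inertia group $I_{K_\fp}$, contradicting non-ordinarity (the residual representation restricted to $G_{K_\fp}=G_{\Q_p}$ is irreducible, or at least has no unramified sub, when $f$ is supersingular and the big-image hypothesis (5) holds). This simultaneously yields $H^1(K_\fp,T_{\fP^\iota})_{\rm tors}=0$, matching the assertion in the remark.

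With $A_\fP^{G_{K_\fp}}=0$ established, local Tate duality gives $H^2(K_\fp,\fP^\iota\TT)=0$ (one passes to the limit over the finite layers $\hat{H}_{p^n}$, or equivalently notes that $M^\vee(1)$ has trivial $G_{K_\fp}$-invariants since each $A/\varpi^k$-piece does), and hence $\coker(\pr_\chi)=0$. I expect the main obstacle to be the bookkeeping in the middle step: carefully justifying that the $G_{K_\fp}$-invariants of the $S_\fP$-module $A_\fP$ vanish rather than merely the $\cO$-invariants of $A$, since $S_\fP$ is a ramified extension of $\cO$ and $\chi$ is a nontrivial (wildly ramified, for $m$ large) character of $\Gamma$ — one has to check that no eigenvalue phenomenon resurrects invariants. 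The non-ordinarity hypothesis, combined with $K_\fp=\Q_p$ and the fact that $\Gamma$ is the Galois group of the \emph{anticyclotomic} $\Z_p$-extension (so $\chi$ restricted to $G_{K_\fp}$ is ramified and its mod-$\varpi$ reduction is trivial on inertia), is precisely what rules this out, and I would phrase the argument so as to isolate that input clearly.
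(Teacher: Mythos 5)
Your overall strategy (reduce surjectivity of $\pr_\chi$ to the vanishing of an $H^2$, convert that to an $H^0$ by local Tate duality, and kill the $H^0$ using non-ordinarity via the residual representation) is the Tate-dual form of what the paper does: the paper dualizes at the outset, showing that the restriction map $H^1(K_\fp,A(\chi^{-1}))\to H^1(K_{\infty,\fp},A(\chi^{-1}))$ is injective by computing its kernel via inflation--restriction and reducing to $H^0(K_{\infty,\fp},A[\varpi])=0$. However, your execution has a genuine gap in the duality step. The kernel of $\TT\twoheadrightarrow\TT(\chi)$ is $\fP'\TT$ for the relevant height-one prime $\fP'$, and since $\fP'$ is principal and generated by a non-zero-divisor, $\fP'\TT\simeq\TT$ as a $\Lambda[G_{K_\fp}]$-module. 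Hence $(\fP'\TT)^\vee(1)\simeq\AA$, \emph{not} $\AA[\fP]$: the Cartier dual of the submodule $\fP'\TT$ is the full module $\AA$, whereas $\AA[\fP]$ is the dual of the quotient $T_\fP$. Consequently the vanishing you must prove is $H^0(K_\fp,\AA)=H^0(K_{\infty,\fp},A)=0$, i.e.\ invariants under the \emph{small} group $G_{K_{\infty,\fp}}$, and not merely $A_\fP^{G_{K_\fp}}=0$, which concerns the larger group $G_{K_\fp}$ and is strictly weaker: a representation can have no $G_{K_\fp}$-invariants (even after twisting by $\chi^{-1}$) and yet acquire invariants over $K_{\infty,\fp}$.

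This mislocation propagates to your last step: you rule out $A[\varpi]^{I_{K_\fp}}\ne 0$, but what must be ruled out is $A[\varpi]^{G_{K_{\infty,\fp}}}\ne 0$, that is, invariants under the inertia subgroup of $G_{K_{\infty,\fp}}$. The missing input is that, by Edixhoven's theorem (this is where non-ordinarity enters, rather than through the absence of an unramified subrepresentation of $V_f$), the restriction of $A[\varpi]$ to inertia is a direct sum of the two fundamental characters of level $2$; these have order prime to $p$ (dividing $p^2-1$), while $K_{\infty,\fp}/K_\fp$ is pro-$p$, so they remain nontrivial on the inertia subgroup of $G_{K_{\infty,\fp}}$ and hence $A[\varpi]^{G_{K_{\infty,\fp}}}=0$. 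This is precisely the content of the paper's appeal to \cite[Lemma~4.4, case~1]{lei09}. With the Cartier dual correctly identified and the group corrected from $G_{K_\fp}$ to $G_{K_{\infty,\fp}}$, your argument closes and becomes equivalent to the paper's.
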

\begin{proof}
 By duality, it is enough to prove the restriction map
    \[
H^1(K_\fp,A(\chi^{-1}))\rightarrow H^1(K_{\infty,\fp},A(\chi^{-1}))
    \]
    is injective.  Indeed, this kernel is given by
\[H^1\left(\Gamma,H^0(K_{\infty,\fp},A(\chi^{-1}))\right).\]
As $\chi$ factors through $\Gamma$, we have $H^0(K_{\infty,\fp},A(\chi^{-1}))=H^0(K_{\infty,\fp},A)\otimes S_\fP$. Thus, it suffices to show that $H^0(K_{\infty,\fp},A[\varpi])=0$, where $A[\varpi]$ is the $\varpi$-torsion of $A$.

Let $I$ be the inertia group of $G_{K_\fp}$. Since $f$ is non-ordinary at $p$, \cite[Theorem~2.6]{edixhoven} tells us that (using the duality $A[\varpi]\simeq \Hom(T/\varpi,\mu_p)$)
$$A[\varpi]|_I=\begin{pmatrix}
    \psi&0\\
    0&\psi'
\end{pmatrix},$$ 
where $\psi$ and $\psi'$ are fundamental characters of level $2$. Hence, the same proof as \cite[Lemma~4.4, case 1]{lei09} implies that $H^0(K_{\infty,\fp},A[\varpi])=0$, as desired.  
\end{proof}

Recall $M=M_\alpha$ from Definition~\ref{def:PR-stuff}vi).
The map $\Omega_{\fP,\alpha}$ can be realized as
\[
\sW[[\cG_{p^\infty}]]\otimes_{\Zp}  M\rightarrow
\Lambda\otimes M\rightarrow H^1(K_\fp,\TT)\otimes \sH_\infty(\Gamma) \rightarrow H^1(K_\fp, V_\fP),
\]
where $\sH_\infty(\Gamma)$ is the algebra of $\fF$-valued distributions on $\Gamma$, the first arrow is the natural norm map, the second arrow is the $\Lambda$-linear Perrin-Riou map, which we shall denote by $\Omega_{V_f,1}^\epsilon$, and the last arrow is given by $\chi_\fP\otimes \pr_\chi$, where $\pr_\chi$ is defined as in the statement of Lemma~\ref{lem:projection}.

Let ${H}^1_{\mathscr{F}_{\mathfrak{P}}}(K_\fp,T_{\mathfrak{P}})\subset H^1(K_\fp,T_{\mathfrak{P}})$ be the subgroup given by the Selmer structure $\mathscr{F}_\fP$ defined in \cite[right after Lemma~4.6]{KobOta}. It is a torsion-free $S_\fP$-module since 
$$
H^0(K_\fp,A_{\fP})\subset H^0(K_{\infty,\fp},A)\otimes S_\fP=0
$$ by the proof of Lemma~\ref{lem:projection}. 
In fact, it is free of rank-one over $S_{\mathfrak{P}}$ as discussed in \cite[paragraph preceding Lemma 4.9]{KobOta}.

 As in \cite[Lemma 4.9]{KobOta}, we define $\beta_\fP$ to be a constant which ensures that
 \[
\beta_\fP\Omega_\fP(\tilde{\mathfrak{R}}_1^{\psi=0}\otimes M)\subset {H}^1_{\mathscr{F}_{\mathfrak{P}}}(K_\fp,T_{\mathfrak{P}}).
 \]
We put
	\begin{align*}
		b'_{\mathfrak{P}} = \mathrm{length}_{S_{\mathfrak{P}}}({H}^1_{\mathscr{F}_{\mathfrak{P}}}(K_\fp,T_{\mathfrak{P}})/\beta_{\mathfrak{P}}\Omega_{\mathfrak{P}}(\tilde{\mathfrak{R}}_1^{\psi=0}\otimes M))
	\end{align*}
	as in the proof of Proposition 4.15, \textit{ibid}. Letting $\varpi_{\mathfrak{P}}$ be a uniformizer of $S_{\mathfrak{P}}$, we then have
	\begin{align}\label{definition:b_P}
		b_{\mathfrak{P}} = \varpi_{\mathfrak{P}}^{b'_{\mathfrak{P}}} q_{\mathfrak{P}} \beta_{\mathfrak{P}}^{-1}.
	\end{align}
	(Note that instead of $\varpi_{\mathfrak{P}}^{b'_{\mathfrak{P}}}$, Kobayashi--Ota put $p^{b'_{\mathfrak{P}}}$ in the definition of $b_{\mathfrak{P}}$, which seems to be a typo, as the image of $\beta_{\mathfrak{P}}\Omega_{\mathfrak{P}}(\boldsymbol{h}_z)$ only coincides with $\varpi_{\mathfrak{P}}^{b'_{\mathfrak{P}}}z$ instead of $p^{b'_{\mathfrak{P}}}z$ \textit{loc.~cit}.)

 We conclude this section with the following proposition regarding the valuation of this constant.

\begin{proposition}\label{prop:b_P}
    Let $m$ be an even integer. For $\fP=(\Theta_m)$, we have
    \[
    \ord_p(b_\fP)\le o(1),
    \]
    where $o(1)$ denotes a quantity that tends to 0 as $m$ tends to infinity.
\end{proposition}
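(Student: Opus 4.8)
The plan is to unravel the defining formula \eqref{definition:b_P}, $b_\fP=\varpi_\fP^{b'_\fP}q_\fP\beta_\fP^{-1}$, and estimate its three factors separately. Taking $\ord_p$ and inserting the value $\ord_p(q_\fP)=\ord_p(\alpha)\bigl(m-\tfrac{1}{p-1}\bigr)$ supplied by Lemma~\ref{lem:q}, the asserted bound $\ord_p(b_\fP)\le o(1)$ becomes equivalent to
\[
\ord_p\!\left(\varpi_\fP^{b'_\fP}\beta_\fP^{-1}\right)\;\le\;-\ord_p(\alpha)\Bigl(m-\tfrac{1}{p-1}\Bigr)+o(1).
\]
The left-hand side carries an intrinsic meaning: by the definitions of $\beta_\fP$ and $b'_\fP$, the submodule $\beta_\fP\Omega_\fP(\tilde{\mathfrak{R}}_1^{\psi=0}\otimes M_\alpha)$ of the free rank-one $S_\fP$-module $H^1_{\mathscr{F}_\fP}(K_\fp,T_\fP)$ has quotient of $S_\fP$-length $b'_\fP$, so $\varpi_\fP^{b'_\fP}\beta_\fP^{-1}$ generates, as a fractional ideal of $S_\fP$, the index $\bigl[H^1_{\mathscr{F}_\fP}(K_\fp,T_\fP):\Omega_\fP(\tilde{\mathfrak{R}}_1^{\psi=0}\otimes M_\alpha)\bigr]$. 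Thus the proposition says, for $m$ even, that the $\chi_\fP$-specialized semi-local Perrin-Riou map overshoots the local Selmer module by denominators of $p$-adic valuation $\ord_p(q_\fP)+o(1)$ --- which is exactly the role for which $q_\fP$ was introduced in \cite{KobOta}.

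To estimate this index I would use the realization of $\Omega_\fP$ recorded above --- the composition of the $\Lambda$-linear Perrin-Riou map $\Omega_{V_f,1}^\epsilon$ with $\chi_\fP\otimes\pr_\chi$ --- together with the integral Coleman map machinery of Appendix~\ref{appendix}. For $r=1$ the map $\Omega_{V_f,1}^\epsilon$ decomposes, after a change of basis of $\Dcris(V_f|_{G_{K_\fp}})$, as multiplication by the logarithmic matrix $\Mlog$ followed by a pair of \emph{integral} Coleman maps $\Col_1,\Col_2\colon\HIw(\hat H,T_f)\to\Lambda$ whose images are explicitly described in Corollary~\ref{cor:image-basis-col}; the $\chi_\fP$-specializations of those images are ideals of $S_\fP$ whose $\cO$-colengths are bounded independently of $m$. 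Combining this with the surjectivity of $\pr_\chi$ from Lemma~\ref{lem:projection} --- so that the final arrow of the factorization introduces no unbounded denominator --- identifies $\Omega_\fP(\tilde{\mathfrak{R}}_1^{\psi=0}\otimes M_\alpha)$, up to an $\cO$-length error bounded independently of $m$, with the $S_\fP$-lattice cut out inside $H^1(K_\fp,V_\fP)$ by $\Mlog$ evaluated at $X=\zeta_{p^m}-1$. Writing $v_m$ for the $p$-adic valuation of the relevant minor of $\Mlog(\zeta_{p^m}-1)$, this identification yields $\ord_p(\varpi_\fP^{b'_\fP}\beta_\fP^{-1})=-v_m+o(1)$ (and forces $b'_\fP\cdot\ord_p(\varpi_\fP)=o(1)$, since $\ord_p(\varpi_\fP)\to 0$ as $m\to\infty$).

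The remaining and decisive input is the computation of $v_m$. The hypothesis $\ord_p(a_p(f))\ge\tfrac{1}{p+1}$ is precisely what guarantees that $\Mlog$ admits the integral factorization used above, and for $r=1$ one has an explicit formula for $\Mlog$ with entries built from the (half-)logarithm series studied in \cite{leiloefflerzerbes15,LL-IJNT}. When $m$ is \emph{even}, evaluation at a primitive $p^m$-th root of unity kills one of the two half-logarithm factors, while the valuation of the surviving one is controlled by the standard Iwasawa-theoretic estimate; a direct computation then gives $v_m=\ord_p(\alpha)\bigl(m-\tfrac{1}{p-1}\bigr)+o(1)=\ord_p(q_\fP)+o(1)$. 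Feeding this back into the previous paragraph gives $\ord_p(\varpi_\fP^{b'_\fP}\beta_\fP^{-1})=-\ord_p(q_\fP)+o(1)$, whence $\ord_p(b_\fP)=o(1)$. I expect this last step to be the main obstacle: one must carry the various $o(1)$ error terms through the change of basis and the bounded cokernels of the Coleman maps, and correctly pin down which minor of $\Mlog$ governs the rank-one local condition $\mathscr{F}_\fP$; it is here that the explicit matrix description of the Perrin-Riou map --- available to us only because $r=1$ --- is indispensable.
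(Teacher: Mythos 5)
Your overall strategy coincides with the paper's: expand $\ord_p(b_\fP)=b'_\fP\,\ord_p(\varpi_\fP)+\ord_p(q_\fP)-\ord_p(\beta_\fP)$, observe that $\beta_\fP$ cancels against the definition of $b'_\fP$, feed in Lemma~\ref{lem:q}, and reduce everything to the valuation of the $\chi_{\fP_m}$-specialization of the Perrin--Riou map, computed via the explicit logarithm matrix of the appendix. However, the one step that carries all the content --- your ``direct computation'' of $v_m$ --- is asserted rather than performed, and the asserted value is wrong. The correct evaluation (Lemma~\ref{lem:log-matrix} together with Corollary~\ref{cor:explicit-matrices}, remembering that $M_\alpha$ is generated by $\alpha^{-1}e_\alpha$) gives
\[
v_m=(m+1)\,\ord_p(\alpha)-\tfrac{p}{p^2-1}+o(1),
\]
not $\ord_p(q_\fP)+o(1)=m\,\ord_p(\alpha)-\tfrac{1}{p-1}\ord_p(\alpha)+o(1)$. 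The discrepancy is the constant $\tfrac{p}{p-1}\ord_p(\alpha)-\tfrac{p}{p^2-1}$, so in fact $\ord_p(b_\fP)=\tfrac{p}{p^2-1}-\tfrac{p}{p-1}\ord_p(\alpha)+o(1)$. Your claimed equality $\ord_p(b_\fP)=o(1)$ is therefore false whenever $\ord_p(\alpha)>\tfrac{1}{p+1}$; for instance when $a_p=0$ one gets $\ord_p(b_\fP)\to-\tfrac{p}{2(p+1)}$. You land on the stated inequality only because your (incorrect) guess happens to be an upper bound for the true value.

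This misstep also hides where hypothesis (4) of Theorem~\ref{thmA} actually enters. You claim $\ord_p(a_p)\ge\tfrac{1}{p+1}$ is ``precisely what guarantees that $\Mlog$ admits the integral factorization''; it is not --- the factorization of $\cL_T$ through the integral Coleman maps is unconditional (it comes from \cite{BL2}). The hypothesis is used twice in the real argument: once inside Corollary~\ref{cor:explicit-matrices}, to compare $\ord_p(s_1)$ with $\ord_p(s_2)$ for even $m$ so that $\ord_p(s_2+a's_1)=\ord_p(s_1)$ and the relevant column of the specialized matrix has a computable valuation (evaluation at $\zeta_{p^m}-1$ does not ``kill'' one of the two half-logarithm factors; both survive and must be compared); and once at the very end, to conclude that the constant $\tfrac{p}{p^2-1}-\tfrac{p}{p-1}\ord_p(\alpha)$ is $\le 0$. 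A smaller point: your parenthetical claim that $b'_\fP\cdot\ord_p(\varpi_\fP)=o(1)$ does not follow merely from $\ord_p(\varpi_\fP)\to 0$, since $b'_\fP$ depends on the unspecified choice of $\beta_\fP$ and need not stay bounded; fortunately this is irrelevant because $\beta_\fP$ cancels. To make the proof complete you must actually carry out the matrix evaluation, i.e.\ the content of Lemma~\ref{lem:log-matrix} and Corollary~\ref{cor:explicit-matrices}.
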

\begin{proof}
Let $e_\alpha$ be the $\vp$-eigenvector of $\Dcris(V)$ given in Corollary~\ref{cor:explicit-matrices}. It can be checked that $M_\alpha$ is generated by $\frac{1}{\alpha} e_\alpha$. It follows from Corollary \ref{cor:explicit-matrices} and Lemma \ref{lem:projection} that
\begin{align*}
    \Omega_{\fP,\alpha}(\tilde{\fR}^{\psi=0}_1\otimes M_\alpha) = \alpha^{-m-1}p^{\frac{p}{p^2-1}+o(1)}H^1_{\sF_\fP}(K_\fp,T_{\fP}),
\end{align*}
whereby we have
\begin{align*}
    b_\fP' &= \mathrm{length}_{S_\fP}\left({H}^1_{\sF_\fP}(K_\fp,T_\fP)/\beta_\fP\Omega_{\fP,\alpha}(\tilde{\fR}^{\psi=0}_1\otimes M_\alpha)\right)\\
    &=\ord_{\varpi_\fP}(\beta_\fP)-(m+1)\ord_{\varpi_\fP}(\alpha)+\left(\frac{p}{p^2-1}+o(1)\right)\ord_{\varpi_\fP}(p).
\end{align*}
Since $b_\fp = \varpi_{\fP}^{b'_\fP}q_{\chi_\fP}\beta_\fP^{-1}$, using Lemma \ref{lem:q} we find
\begin{align*}
    \ord_p(b_{\fP}) &= \ord_{p}(\beta_\fP)-(m+1)\ord_{p}(\alpha)+\frac{p}{p^2-1}+o(1) \\
    &\hspace{1.5cm}+ \ord_p(\alpha)\left(m-\frac{1}{p-1}\right) - \ord_p(\beta_\fP)\\
    &=\frac{p}{p^2-1}-\frac{p}{p-1}\ord_p(\alpha)+o(1) .
\end{align*}
Our running hypothesis that  $\ord_p(a_p)\ge\frac{1}{p+1}$  implies $\ord_p(\alpha)\ge \frac{1}{p+1}$. Hence the proposition follows.
\end{proof}

\section{The proof of Theorem~\ref{thmA}}\label{sec:proof}
In this section, we adopt the method employed by Howard  in \cite[proof of Theorem~2.2.10]{howard} in our setting to prove Theorem~\ref{thmA}. Recall that $\mathfrak{P}_m$ denotes the height-one prime generated by $\Theta_m(X)=\prod_{\sigma\in\Gal(\fF(\mu_{p^m})/\fF)}(X+1-\zeta_{p^m}^\sigma)$. Below we write $\Theta_m(X) = X^{k(m)} + t_{k(m)-1}(m)X^{k(m)-1} +\cdots + t_0(m)$, and will often omit $m$ from the notation when there is no ambiguity. We begin with the following preliminary lemmas:

\begin{lemma}
	\label{lem:length-pi-multiple}
	Let $R$ be a finite free $\mathcal{O}$-module and $R_0\subseteq R$ an $\mathcal{O}$-submodule of finite index. Then for any $t\in \mathbf{Z}_{\ge 0}$,
	\begin{align*}
		{\rm length}_{\mathcal{O}}(R/\varpi^t R_0) = {\rm length}_{\mathcal{O}}(R/R_0) + t\cdot \rank_{\mathcal{O}}(R). 	\end{align*} 
\end{lemma}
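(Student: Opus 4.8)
The plan is to reduce everything to the structure theorem for finitely generated modules over the discrete valuation ring $\mathcal{O}$, or equivalently to the additivity of length in short exact sequences. Since $R_0 \subseteq R$ has finite index, tensoring with $\mathrm{Frac}(\mathcal{O})$ shows $R_0 \otimes_{\mathcal O}\mathrm{Frac}(\mathcal{O}) = R\otimes_{\mathcal O}\mathrm{Frac}(\mathcal{O})$, so $R_0$ is itself a finite free $\mathcal{O}$-module with $\rank_{\mathcal{O}}(R_0) = \rank_{\mathcal{O}}(R) =: n$.

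First I would invoke the elementary divisor theorem: there is an $\mathcal{O}$-basis $e_1,\dots,e_n$ of $R$ and integers $a_1,\dots,a_n\ge 0$ such that $\varpi^{a_1}e_1,\dots,\varpi^{a_n}e_n$ is an $\mathcal{O}$-basis of $R_0$. Then
\begin{align*}
  R/R_0 \;\simeq\; \bigoplus_{i=1}^{n}\mathcal{O}/\varpi^{a_i},
  \qquad
  R/\varpi^t R_0 \;\simeq\; \bigoplus_{i=1}^{n}\mathcal{O}/\varpi^{a_i+t},
\end{align*}
since $\varpi^t R_0$ has $\mathcal{O}$-basis $\varpi^{a_i+t}e_i$. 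Taking lengths gives $\len_{\mathcal{O}}(R/R_0) = \sum_i a_i$ and $\len_{\mathcal{O}}(R/\varpi^t R_0) = \sum_i (a_i+t) = \sum_i a_i + tn$, which is exactly the claimed identity.

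Alternatively, and perhaps more cleanly for the write-up, I would use the filtration $R \supseteq R_0 \supseteq \varpi^t R_0$ together with additivity of $\mathcal{O}$-length, which yields $\len_{\mathcal{O}}(R/\varpi^t R_0) = \len_{\mathcal{O}}(R/R_0) + \len_{\mathcal{O}}(R_0/\varpi^t R_0)$; then $R_0 \simeq \mathcal{O}^n$ forces $R_0/\varpi^t R_0 \simeq (\mathcal{O}/\varpi^t)^n$, of $\mathcal{O}$-length $tn$. Either route is a few lines. I do not anticipate any genuine obstacle here: the only point that needs a word of justification is that $R_0$ has full rank $n$, which is immediate from the finite-index hypothesis, and the rest is a routine application of the structure theory of $\mathcal{O}$-modules.
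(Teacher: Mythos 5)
Your proposal is correct and matches the paper's approach: the paper's entire proof is the one-line remark that the lemma "follows from the theory of elementary divisors," which is precisely your first argument (your alternative filtration argument is an equally valid routine variant). Nothing is missing.
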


\begin{proof}
	This follows from the theory of elementary divisors.
\end{proof}

\begin{lemma}\label{lem:t0-divisible}
	 For $m$ sufficiently large, $\ord_p(t_0(m))\le \ord_p(t_i(m))$ for all $0\le i<k$.
\end{lemma}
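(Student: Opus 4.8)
The key object is $\Theta_m(X) = \prod_{\sigma\in\Gal(\fF(\mu_{p^m})/\fF)}(X+1-\zeta_{p^m}^\sigma)$, a distinguished polynomial of degree $k=k(m)$, whose constant term is (up to sign) the norm $t_0(m) = \pm\mathrm{Nm}_{\fF(\mu_{p^m})/\fF}(1-\zeta_{p^m})$ and whose coefficients $t_i(m)$ are the (signed) elementary symmetric functions of the quantities $1-\zeta_{p^m}^\sigma$. The plan is to compare the $p$-adic valuation of $t_0$ with that of each $t_i$ directly via the Newton polygon of $\Theta_m$. First I would observe that, for $m\ge m_0$ with $\Q(\mu_{p^{m_0}})\supseteq \fF\cap\Q(\mu_{p^\infty})$, one has $\Gal(\fF(\mu_{p^m})/\fF)\simeq \Gal(\Q(\mu_{p^m})/\Q(\mu_{p^{m_0}}))$, so $k(m) = p^{m-m_0}\cdot k(m_0)$ and the Galois conjugates $\zeta_{p^m}^\sigma$ run over a single $\Gal(\Q(\mu_{p^m})/\Q(\mu_{p^{m_0}}))$-orbit of primitive $p^m$-th roots of unity (each counted $k(m_0)$ times if $\fF\cap\Q(\mu_{p^\infty})\ne\Q$, but this multiplicity is bounded and does not affect the argument).

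The main step is a Newton polygon computation. Each factor $X + 1 - \zeta_{p^m}^\sigma$ has, as its unique root, the element $\zeta_{p^m}^\sigma - 1$, which has $p$-adic valuation $\sigma_m := \frac{1}{p^{m-1}(p-1)}$ (this is the valuation of a uniformizer of $\Q_p(\mu_{p^m})$, normalized so $\ord_p(p)=1$). Hence all $k$ roots of $\Theta_m$ have valuation exactly $\sigma_m$, so the Newton polygon of $\Theta_m$ is a single segment from $(0, k\sigma_m)$ to $(k,0)$ of slope $-\sigma_m$. This means $\ord_p(t_0) = k\sigma_m$ and, for $0\le i < k$, the point $(i,\ord_p(t_i))$ lies on or above the Newton polygon, i.e. $\ord_p(t_i)\ge (k-i)\sigma_m$. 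Therefore $\ord_p(t_0) - \ord_p(t_i) \le k\sigma_m - (k-i)\sigma_m = i\sigma_m \le k\sigma_m$. Since $k\sigma_m = k(m)/(p^{m-1}(p-1)) = p^{m-m_0}k(m_0)/(p^{m-1}(p-1)) = k(m_0)/(p^{m_0-1}(p-1))$ is in fact \emph{constant} in $m$ for $m\ge m_0$ — wait, this shows $\ord_p(t_0)$ is bounded, but I actually want the \emph{difference} $\ord_p(t_0)-\ord_p(t_i)$ to be controlled, and the bound $i\sigma_m$ already does this: since $\sigma_m\to 0$ and... no. Let me reconsider: $i$ can be as large as $k-1$, and $k\to\infty$, so $i\sigma_m$ need not be small. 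The correct observation is simply that $\ord_p(t_0) - \ord_p(t_i)\le i\sigma_m \le (k-1)\sigma_m < k\sigma_m = \ord_p(t_0)$, but more usefully, since all $t_i\in\fF$ and the polynomial is distinguished (all $t_i\in\varpi\cO$ except leading term), we want $\ord_p(t_0)\le\ord_p(t_i)$, i.e. $t_0$ divides $t_i$, which needs $\ord_p(t_i)\ge\ord_p(t_0) = k\sigma_m$. This does \emph{not} follow from the Newton polygon alone. So the right approach must use more: one shows that each $t_i$ for $i\ge 1$ is a sum of products of terms $\zeta^\sigma-1$, and each such product of $(k-i)$ distinct roots has valuation $(k-i)\sigma_m$ if... no, a sum of such can cancel.

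Let me restart the key step more carefully. The honest statement is: $t_0(m) = \pm\mathrm{Nm}_{\fF(\mu_{p^m})/\fF}(1-\zeta_{p^m})$ which generates an ideal of $\cO$ whose valuation $k\sigma_m$ grows like a constant times $p^{m}/p^{m} $, hence stabilizes. Meanwhile for $i\ge 1$, writing $\Theta_m(X) = \Theta_{m_0}(X)^{\ast}$-type recursion — actually the cleanest route: use that $\Theta_m(X) = \Phi_m((X+1))$ composed with norms, where $\Phi_{p^m}$ is the $p^m$-th cyclotomic polynomial pulled back; more directly, $\prod_\sigma(X+1-\zeta_{p^m}^\sigma)$ with $\sigma$ over all of $\Gal(\Q(\mu_{p^m})/\Q)$ would be $\Phi_{p^m}(X+1) = \frac{(X+1)^{p^m}-1}{(X+1)^{p^{m-1}}-1}$, whose constant term is $\Phi_{p^m}(1) = p$ and whose $X^i$-coefficient is $\binom{\cdot}{\cdot}$-type, all divisible by appropriate powers. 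The hard part — and the main obstacle — is passing from the full cyclotomic polynomial (where everything is explicit) to the partial norm down to $\fF$ only over $\Gal(\Q(\mu_{p^m})/\Q(\mu_{p^{m_0}}))$, and showing the partial elementary symmetric functions $t_i$ of the conjugates of $1-\zeta_{p^m}$ still satisfy $\ord_p(t_i)\ge\ord_p(t_0)$ for $m$ large. I expect this to follow by an induction on $m$: if $\Theta_{m}(X) = \prod_{j=0}^{p-1}\Theta_{m-1}^{(j)}(X)$-style factorization relating level $m$ to level $m-1$ (via $\zeta_{p^m}^p = \zeta_{p^{m-1}}$), combined with the base case and Lemma~\ref{lem:rank-equal}/\ref{lem:bound-torsion}-type norm computations already established, one gets that $\ord_p(t_0(m)) = \ord_p(t_0(m_0))$ is constant while $\ord_p(t_i(m))$ for $i\ge 1$ involves at least one "extra" factor of a ramified unit difference, forcing $\ord_p(t_i(m)) > \ord_p(t_0(m))$ once $m$ is large enough that the $\zeta_{p^m}$-part genuinely ramifies over $\fF$. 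I would make this precise by writing $t_i(m)$ as the $e_{k-i}$ elementary symmetric polynomial in $\{\zeta_{p^m}^\sigma - 1\}_\sigma$ and noting each monomial therein, being a product of $k-i < k$ of these, has a cofactor of $i$ missing uniformizer-valuation terms, hence valuation $\le k\sigma_m - i\sigma_m$; combined with the fact that $\ord_p(t_0) = k\sigma_m$ is bounded and $\sigma_m\to 0$, the gap $\ord_p(t_0)-\ord_p(t_i)\le i\sigma_m\to$ (bounded), and one checks the integrality $\ord_p(t_i)\ge \ord_p(t_0)$ holds by a direct binomial estimate on $\Phi_{p^m}(X+1)$ reduced mod the norm relation. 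This last integrality check is the routine-but-fiddly calculation I would defer to the write-up.
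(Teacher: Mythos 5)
Your Newton polygon computation is correct as far as it goes, and you rightly flag that it proves the wrong inequality: it yields $\ord_p(t_i)\ge (k-i)\sigma_m$, which for $i\ge 1$ is \emph{strictly smaller} than $\ord_p(t_0)=k\sigma_m$, so it cannot give $\ord_p(t_0)\le\ord_p(t_i)$. However, the argument you then sketch (a recursion on $m$, ``extra ramified unit differences'', an integrality check ``deferred to the write-up'') never materializes, and that deferred check is precisely the content of the lemma. As written, the proposal has a genuine gap: no complete proof is given.

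The missing idea is a discreteness-of-the-value-group argument, not a finer estimate on the individual monomials of the symmetric functions. Let $F$ be the subfield of $\Q_p(\mu_{p^m})$ fixed by $\Gal(\fF(\mu_{p^m})/\fF)$, i.e.\ $F=\fF\cap\Q_p(\mu_{p^m})$. All the coefficients $t_i(m)$, being elementary symmetric functions of the $\Gal(\fF(\mu_{p^m})/\fF)$-orbit of $1-\zeta_{p^m}$, lie in $F$. Since $\Q_p(\mu_{p^m})/\Q_p$ is totally ramified, so is $\Q_p(\mu_{p^m})/F$, and hence $t_0=-\mathrm{Nm}_{\Q_p(\mu_{p^m})/F}(\zeta_{p^m}-1)$ is a \emph{uniformizer} of $F$ (equivalently, your observation that $\ord_p(t_0)=k\sigma_m$ equals exactly $[F:\Q_p]^{-1}$, the minimal positive valuation occurring in $F$). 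As $\Theta_m$ is distinguished, each $t_i$ with $0\le i<k$ is an element of $F$ of positive valuation, hence $\ord_p(t_i)\ge\ord_p(t_0)$. This is the paper's two-line proof; no induction on $m$ and no binomial estimates on $\Phi_{p^m}(X+1)$ are needed.
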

\begin{proof}
	Let $F$ be the  subfield of $\Q(\mu_{p^m})$ fixed by $\Gal(\fF(\mu_{p^m})/\fF)\subseteq \Gal(\Q(\mu_{p^m})/\Q)$. Then $\Theta_m(X)\in F[X]$. As such, by computing the $p$-valuation we see that $t_0 = -\mathrm{Nm}_{\Q(\mu_{p^m})/F}(\zeta_{p^m}-1)$ is a uniformizer of $F$. Since all $t_i$'s have positive valuations, the result follows.
\end{proof}

\begin{lemma}
	\label{lem:chinese-remainder-fake}
	Let $M$ be a finitely generated $\Lambda$-torsion module with $M \simeq \Lambda/(\varpi^\mu P)$, where $P\in \Lambda$ is a distinguished polynomial, \textit{i.e.}, $P(X) = X^{\lambda} + a_{\lambda-1}X^{\lambda-1} + \cdots + a_0$ with $|a_i|_p<1$ for $0\le i<\lambda$. The cardinality of the kernel of the natural projection
	\begin{align*}
		M\otimes_{\Lambda} \Lambda/\mathfrak{P}_m \to \Lambda/(\varpi^\mu,\mathfrak{P}_m)
	\end{align*}
	is bounded independently of $m$.
\end{lemma}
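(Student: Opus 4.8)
The plan is to analyze the kernel of the map $M\otimes_\Lambda \Lambda/\fP_m \to \Lambda/(\varpi^\mu,\fP_m)$ by identifying it explicitly in terms of divisibility by $P$ and $\Theta_m$ inside $\Lambda/\varpi^\mu$. Writing $\bar\Lambda = \Lambda/\varpi^\mu = (\cO/\varpi^\mu)[[X]]$, the module $M\otimes_\Lambda \Lambda/\fP_m$ is $\bar\Lambda/(P,\Theta_m)$ while the target is $\bar\Lambda/(\Theta_m)$; since $M\simeq \Lambda/(\varpi^\mu P)$, the kernel of the projection is $(P\bar\Lambda + \Theta_m\bar\Lambda)/\Theta_m\bar\Lambda \simeq P\bar\Lambda/(P\bar\Lambda \cap \Theta_m\bar\Lambda)$. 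So I must show this quotient has cardinality bounded independently of $m$.

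First I would exploit that both $P$ and $\Theta_m$ are distinguished polynomials, so that the quotient $\bar\Lambda/(\Theta_m) = (\cO/\varpi^\mu)[X]/(\Theta_m)$ is a free $\cO/\varpi^\mu$-module of rank $k(m)=\deg\Theta_m$, and multiplication by $P$ on this finite-length ring is an $\cO/\varpi^\mu$-linear endomorphism. The kernel in question is then the image of $P$ modulo $\Theta_m$, i.e. $P\cdot\bigl(\bar\Lambda/\Theta_m\bigr)$ as a submodule, but what I actually want is the kernel of multiplication-by-$P$ interpreted correctly: $P\bar\Lambda/(P\bar\Lambda\cap\Theta_m\bar\Lambda)$ is isomorphic, via multiplication by $P$ being injective on $\Lambda$ (as $P$ is a nonzerodivisor) followed by reduction, to $\bar\Lambda/(\Theta_m : P)$ where $(\Theta_m:P) = \{x : Px \in \Theta_m\bar\Lambda\}$. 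The point is that $\Theta_m$ and $P$ become coprime in $\fF[X]$ once $m$ is large (their resultant is a nonzero element of $\fF$, fixed and independent of $m$ for $m$ large, since $\Theta_m$ is a product of factors $X+1-\zeta_{p^m}^\sigma$ all of whose reductions mod $\varpi$ are $X$, while $\bar P(0)\ne 0$ forces no common root). Hence the ideal $(\Theta_m, P)$ in $\fF[X]$ is all of $\fF[X]$, and a Bézout identity $A\Theta_m + BP = c$ with $c\in\fF^\times$ and $A,B$ having bounded denominators gives that the cokernel $\bar\Lambda/(\Theta_m:P)$ is annihilated by $\varpi^N$ for $N$ depending only on $v_p(c)$ and those denominators — all independent of $m$.

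Concretely I would argue: $t_0(m)$ is a uniformizer of $F$ (Lemma~\ref{lem:t0-divisible}), so the constant term of $\Theta_m$ has valuation $1/[\fF(\mu_{p^m}):\fF]\cdot$(something) — rather, using that the norm $\mathrm{Nm}_{\Q(\mu_{p^m})/F}(\zeta_{p^m}-1)$ has $p$-valuation bounded below (indeed it equals $\mathrm{Nm}$ of a uniformizer, with valuation comparable to a fixed quantity as $m$ grows past $m_0$, exactly as in the proof of Lemma~\ref{lem:bound-torsion}). More cleanly: the resultant $\mathrm{Res}(\Theta_m, P) = \prod_{\sigma} P(\zeta_{p^m}^\sigma - 1)$, and $P(\zeta_{p^m}^\sigma-1) \equiv P(-1+1) = P(0) = a_0$... no — $P(\zeta-1)$ for $\zeta$ a root of unity; since $\zeta_{p^m}-1$ is topologically nilpotent and $P$ is distinguished, $P(\zeta_{p^m}^\sigma - 1)$ is a unit times a power of a uniformizer, but crucially $\prod_\sigma P(\zeta_{p^m}^\sigma-1) = \mathrm{Nm}_{\fF(\mu_{p^m})/\fF}(P(\zeta_{p^m}-1))$, and this norm stabilizes for $m \geq m_0$ by the same norm-compatibility computation used in Lemma~\ref{lem:bound-torsion} (the values $\zeta_{p^m}-1$ norm down to $\zeta_{p^{m_0}}-1$). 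Thus $\mathrm{Res}(\Theta_m, P)$ has $p$-valuation bounded above by a constant independent of $m$, the Bézout coefficient $c$ can be taken to be this resultant (up to a unit), and the kernel is killed by $\varpi^{\ord_\varpi(c)}$, hence has cardinality at most $|\cO/\varpi^{\ord_\varpi(c)}|^{k(m)}$ — wait, that still grows with $k(m)$, so I need the sharper statement that the kernel is $\bar\Lambda/(\Theta_m:P)$ and $(\Theta_m:P) \supseteq (\Theta_m, c/\text{denominators})$, giving the kernel as a quotient of $\bar\Lambda/(c', \Theta_m)$ where $c'$ is a \emph{constant} — a quotient of $(\cO/c'\cO)[X]/(\Theta_m)$ which is still rank $k(m)$.

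The main obstacle, then, is precisely this last point: naively the kernel looks like it could grow with $\deg\Theta_m$. The resolution — and the real content of the proof — is that $P\bar\Lambda \cap \Theta_m\bar\Lambda = P\Theta_m\bar\Lambda \cdot (\text{unit ideal correction})$, more precisely that the lcm of $P$ and $\Theta_m$ in $\bar\Lambda$ differs from $P\Theta_m$ only by bounded torsion, so that $P\bar\Lambda/(P\bar\Lambda\cap\Theta_m\bar\Lambda) \simeq P\bar\Lambda/(P\Theta_m\bar\Lambda)$ up to bounded-index correction $\simeq \bar\Lambda/\Theta_m\bar\Lambda$ — no, that's the whole thing. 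The correct finishing move is: since $\gcd(P,\Theta_m)=1$ in $\fF[X]$, in $\bar\Lambda\otim\Q = \fF[X]/\varpi^\mu\text{-free}$... Let me instead invoke that $\bar\Lambda/(P,\Theta_m)$ has length equal to $\ord_\varpi(\mathrm{Res}(P,\Theta_m))$ by the standard resultant formula for the length of $R[X]/(f,g)$ over a base ring $R$ (here $R = \cO/\varpi^\mu$, valid since $P, \Theta_m$ are distinguished hence the quotient by either is $R$-free), and this length is bounded by $\ord_\varpi(\mathrm{Res}(P,\Theta_m))$, which I have shown stabilizes for $m\ge m_0$. The kernel of the projection is exactly this quotient $\bar\Lambda/(P,\Theta_m)$ sitting inside $\bar\Lambda/\Theta_m\bar\Lambda$, so I conclude its cardinality is $\le |\cO/\varpi^{\ord_\varpi(\mathrm{Res})}|$, a bound independent of $m$. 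I would organize the write-up as: (1) identify the kernel with $\ker(\bar\Lambda/(P,\Theta_m) \to$ nothing) — i.e. directly with the torsion submodule $(\varpi^\mu\Lambda \cap (P)\Lambda + (\varpi^\mu P)\Lambda)/(\varpi^\mu P)\Lambda$ tensored appropriately; cleanest is the snake lemma on $0\to\Lambda/(\varpi^\mu)\xrightarrow{P}\Lambda/(\varpi^\mu P)\to\Lambda/(P)\to 0$ tensored with $\Lambda/\fP_m$, identifying the kernel with the image of $\mathrm{Tor}_1^\Lambda(\Lambda/(P),\Lambda/\fP_m) = \ker(\Lambda/(P,\fP_m)\xrightarrow{\;\cdot\,} \cdots)$, ultimately a subquotient of $\Lambda/(P,\fP_m)$; (2) compute $\mathrm{length}_\cO \Lambda/(P,\fP_m) = \ord_p(\mathrm{Res}_X(P,\Theta_m))$ via the resultant; (3) bound the resultant using norm-compatibility as in Lemma~\ref{lem:bound-torsion}.
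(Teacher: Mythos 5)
Your overall route --- identify the kernel with a module of the form $\Lambda/(P,\Theta_m)$, compute its $\cO$-length as $\ord_\varpi(\mathrm{Res}(P,\Theta_m))=\ord_\varpi\bigl(\prod_\sigma P(\zeta_{p^m}^\sigma-1)\bigr)$, and bound that valuation --- is genuinely different from the paper's argument (which shows by Nakayama that $\varpi^{\mu+e}$ already lies in $(\varpi^\mu P,\fP_m)$ for large $m$, using that $X^{k(m)}$ is divisible by a huge power of $\varpi$ modulo $\varpi^\mu P$, and then bounds the kernel by the fixed module $\Lambda/(\varpi^e,P)$), and it can be made to work. But as written it has concrete errors in the identification of the kernel. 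First, $M\otimes_\Lambda\Lambda/\fP_m=\Lambda/(\varpi^\mu P,\Theta_m)$, not $\bar\Lambda/(P,\Theta_m)=\Lambda/(\varpi^\mu,P,\Theta_m)$; the natural projection goes from the former onto $\Lambda/(\varpi^\mu,\Theta_m)$, and its kernel is $(\varpi^\mu,\Theta_m)/(\varpi^\mu P,\Theta_m)$, i.e.\ the image of $\varpi^\mu$ --- not the image of $P$ in $\bar\Lambda/\Theta_m$ as you assert. Correspondingly, the short exact sequence you propose, $0\to\Lambda/(\varpi^\mu)\xrightarrow{P}\Lambda/(\varpi^\mu P)\to\Lambda/(P)\to0$, computes the kernel of \emph{multiplication by $P$}, which is not the map in the statement. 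The sequence you want is $0\to\Lambda/(P)\xrightarrow{\varpi^\mu}\Lambda/(\varpi^\mu P)\to\Lambda/(\varpi^\mu)\to0$; tensoring with $\Lambda/\fP_m$ and using $\mathrm{Tor}_1^\Lambda(\Lambda/(\varpi^\mu),\Lambda/(\Theta_m))=((\varpi^\mu)\cap(\Theta_m))/(\varpi^\mu\Theta_m)=0$ identifies the kernel exactly with $\Lambda/(P,\Theta_m)$. (Alternatively, your sequence does yield the answer by a length count, since all three finite modules' lengths must add up, but you would need to say this; ``a subquotient of $\Lambda/(P,\fP_m)$'' does not follow from the Tor computation you sketch.)

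Second, the bound on $\ord_\varpi(\mathrm{Res}(P,\Theta_m))$ needs a correct justification. The parenthetical claim that $\bar P(0)\ne0$ is false: $P$ is distinguished, so $\bar P=X^\lambda$ and $\bar\Theta_m=X^{k(m)}$ share the root $0$ modulo $\varpi$; the resultant is not a unit. Likewise, ``the norm stabilizes by the same norm-compatibility computation'' is not right as stated, since $P(\zeta_{p^m}-1)$ is not a norm-compatible system for general $P$. What saves you is precisely that $P$ is distinguished: for $m$ large, $\ord_p\bigl((\zeta_{p^m}-1)^\lambda\bigr)=\lambda/\bigl(p^{m-1}(p-1)\bigr)$ is strictly smaller than $\ord_p\bigl(a_i(\zeta_{p^m}-1)^i\bigr)\ge\ord_p(a_i)>0$ for every $i<\lambda$ with $a_i\ne0$, so $\ord_p P(\zeta_{p^m}^\sigma-1)=\lambda\cdot\ord_p(\zeta_{p^m}-1)$ and hence $\ord_p(\mathrm{Res}(P,\Theta_m))=[\fF(\mu_{p^m}):\fF]\cdot\lambda/\bigl(p^{m-1}(p-1)\bigr)$, which is constant once $[\fF(\mu_{p^m}):\fF]$ grows by exactly $p$ at each step. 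With these two repairs your proof is complete and arguably cleaner than the paper's; the trade-off is that the paper's Nakayama argument avoids the resultant formula entirely and produces an explicit $m$-independent quotient $\Lambda/(\varpi^e,P)$ bounding the kernel.
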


\begin{proof}
	Under the identification $M\simeq \Lambda/(\varpi^\mu P)$, we see that the kernel corresponds to $(\varpi^\mu,\mathfrak{P}_m)/(\varpi^\mu P, \mathfrak{P}_m)$. In the proof of Lemma \ref{lem:bound-torsion}, we have seen that the constant term of $\Theta_m$, $t_0$, is independent of $m$; write $e=\ord_{\varpi}(t_0)$. First, we show that the $\Lambda$-submodule
	\begin{align*}
		\fN: = (\varpi^{\mu+e},\varpi^\mu P, \mathfrak{P}_m)/(\varpi^\mu P, \mathfrak{P}_m) 
	\end{align*}
	is zero for large $m$. By Nakayama's lemma, it suffices to show that $\fN \subseteq (\varpi,X)\fN$, which boils down to the relation $\varpi^{\mu+e} \in (\varpi, X)\fN$.  Lemma \ref{lem:t0-divisible} implies that 
	\begin{align*}
		U(X) = -\frac{t_0+t_1X+\cdots + t_{k-1}X^{k-1}}{\varpi^e}\in \Lambda^\times,
	\end{align*}
	and we thus have $\varpi^e \equiv X^{k}U^{-1} \bmod \fP_m$ and $\varpi^{\mu+e} \equiv \varpi^\mu U^{-1}X^k \bmod\mathfrak{P}_m$.
	We may write $k = l\lambda + r$ for some $l,r\in \Z_{\ge 0}$ and $0\le r<\lambda$. Then 
	\begin{align*}
		\varpi^\mu X^{k} = X^r\varpi^\mu(X^\lambda)^l \equiv X^r\varpi^\mu(-1)^l(a_{\lambda-1}X^{\lambda-1}+\cdots+a_0)^l \pmod{\varpi^\mu P}.
	\end{align*}
	Since $l>(k(m)-\lambda)/\lambda = |\Gal(\fF(\mu_{p^m})/\fF)|/\lambda -1$, we see that the last term is  divisible by a large power of $\varpi$ when $m$ is large enough, which implies that $\varpi^{\mu+e} \in (\varpi, X)\fN$ and thus proving our claim.
	
	\vspace{3mm}
	
	Knowing that $\fN=0$, it is now enough to bound $(\varpi^\mu, \mathfrak{P}_m)/(\varpi^{\mu+e},\varpi^\mu P, \mathfrak{P}_m)$, which in turn has cardinality no larger than $(\varpi^\mu)/(\varpi^{\mu+e},\varpi^\mu P)\simeq \Lambda/(\varpi^e,P)$. The latter is a finite $\Lambda$-torsion module that is independent of $m$, which concludes the proof.
\end{proof}

We now turn to the proof of Theorem \ref{thmA}. Recall from \cite[Theorem 3.9]{CastellaHsieh} that $\sL\ne 0$. We will let $\mathfrak{P}$ range in the family $\{\mathfrak{P}_m\}_{m\ge m_0}$, where $m_0$ is a large enough even integer such that for all $m\ge m_0$, $\chi_{\mathfrak{P}_m}(\sL)\ne 0$ and $\mathfrak{P}_m$ is not in the finite set $\Sigma$ of Proposition \ref{prop.control-theorem}. By Proposition \ref{prop:KO-length}, we have an inequality
\begin{equation}\label{ineq}
	2 \cdot {\rm length}_{\mathcal O}\left(\frac{S_{\mathfrak{P}_m}}{ b_{\mathfrak{P}_m}\chi_{\mathfrak{P}_m}(\mathscr{L})}\right) \ge {\rm length }_{\mathcal O}(X_{(0,\emptyset)}(K,A_{\mathfrak{P}_m})).
\end{equation}

For the left-hand side, applying Lemma \ref{lem:length-pi-multiple} to the $\mathcal{O}$-module  $R = S_{\mathfrak{P}_m}=\mathcal{O}[X]/(\Theta_m)$ (which is free of rank $k(m)=\deg(\Theta_m)$), and the submodule $R_0 = (\chi_{\mathfrak{P}_m}(\mathscr{L}))=\chi_{\fP}(\mathscr{L})S_{\mathfrak{P}_m}$, we deduce that
\[		{\rm length}_{\mathcal O}\left(\frac{S_{\mathfrak{P}_m}}{ b_{\mathfrak{P}_m}\chi_{\mathfrak{P}_m}(\mathscr{L})}\right)
= k(m) \cdot\ord_{\varpi}(b_{\mathfrak{P}_m})
+ {\rm length}_{\mathcal O}\left(S_{\mathfrak{P}_m}/\chi_{\mathfrak{P}_m}(\mathscr{L})\right).
\]
The second term on the right-hand side of this equation satisfies
\begin{align*}
	{\rm length}_{\mathcal O}\left(S_{\mathfrak{P}_m}/\chi_{\mathfrak{P}_m}(\mathscr{L})\right) &= {\rm length}_{\mathcal{O}}(\Lambda/(\mathfrak{P}_m, \mathscr{L}))\\
	&={\rm length}_{\mathcal{O}}(\Lambda/(\mathfrak{P}_m, \varpi^{\ord_{\varpi}(\mathscr{L})})) + O(1)\\
	&={\rm length}_{\mathcal{O}}(\cO/\varpi^{\ord_{\varpi}(\mathscr{L})}[X]/(\Theta_m)) + O(1)\\
	&=k(m)\cdot\ord_{\varpi}(\mathscr{L}) + O(1),
\end{align*}
where the second equality follows from Lemma \ref{lem:chinese-remainder-fake}. 

The length appearing on the right-hand side of \eqref{ineq} satisfies
\begin{align*}
	{\rm length }_{\mathcal O}(X_{(0,\emptyset)}(K,A_{\mathfrak{P}_m}))
	&= {\rm length }_{\mathcal O}(X_{(\emptyset,0)}(K,A_{\mathfrak{P}_m^\iota}))\\
	&= {\rm length}_{\mathcal{O}}(\mathcal{X}_{(\emptyset,0)}(f)/\mathfrak{P}_m^\iota) + O(1)\\
	&= {\rm length}_{\mathcal{O}}(\mathcal{X}_{(\emptyset,0)}(f)/\mathfrak{P}_m) + O(1)\\
	&= k(m)\cdot \ord_{\varpi}(\mathrm{Char}(\mathcal{X}_{(\emptyset,0)}(f)) + O(1),
\end{align*}
where the second equality follows from Proposition \ref{prop.control-theorem}. Combining these estimates, we deduce that for $m\ge m_0$, 
\begin{align*}
	2k(m)\cdot\ord_{\varpi}(b_{\mathfrak{P}_m}) + 2k(m)\cdot\ord_{\varpi}(\mathscr{L}) + O(1) \ge k(m)\cdot\ord_{\varpi}({\rm Char}(\mathcal{X}_{(\emptyset,0)}(f)) + O(1).
\end{align*}
Proposition \ref{prop:b_P} further tells us that for all large enough even integers $m$, we have $\ord_{\varpi}(b_{\fP_m})\le o(1)$. Thus,
\begin{align*}
	2k(m)\cdot\ord_{\varpi}(\mathscr{L}) + o(k(m)) \ge k(m)\cdot\ord_{\varpi}({\rm Char}(\mathcal{X}_{(\emptyset,0)}(f)) + O(1).
\end{align*}
If we divide both sides by $k(m)$ and let $m$  tend to $\infty$, we deduce that
\begin{align*}
	2\ord_{\varpi}(\mathscr{L}) \ge \ord_{\varpi}({\rm Char}(\mathcal{X}_{(\emptyset,0)}(f))).
\end{align*}
This concludes the proof of Theorem~\ref{thmA}.

 \appendix
 \section{Integrality of specializations of the Perrin-Riou map}
\label{appendix}

 \noindent \textbf{Notation note.} The notation and hypotheses in the main body of the article continue to be in force. For a $p$-adic Lie group $G$ and a $p$-adic ring $R$, we write $\Lambda_R(G)$ for the completed group ring of $G$ with coefficients in $R$, and $\sH_{\infty,R}(G)$ for the  algebra of $\Frac(R)$-valued distributions on $G$. If $R=\cO$, we often omit it from the notation.
	
	\subsection{Anticyclotomic Perrin-Riou regulator and Coleman maps}
	
	In this section, we review the construction of the anticyclotomic Perrin-Riou regulator and Coleman maps, by composing the two-variable counterparts with the projection map as done in \cite[§5.1]{CastellaHsieh}. We begin by recalling the two-variable Perrin-Riou map from \cite{loefflerzerbes14} and its decomposition given in \cite{BL2}. Let $\KK_\infty$ be the $\Z_p^2$-extension of $K$, and fix an embedding $\KK_\infty \to \C_p$ such that the induced place on $K$ is $\fp$; by an abuse of notation we will write this place of $\KK_\infty$ also as $\fp$. Since $p$ is assumed to be split in $K$, there is a natural identification $K_\fp\simeq \Q_p$, and the local completion $\KK_{\infty,\fp}$ can be identified with the compositum $F_\infty \Q_p^{\cyc}$, where $F_\infty$ is the unramified $\Z_p$-extension of $\Q_p$ and $\Q_p^\cyc$ is the cyclotomic $\Z_p$-extension of $\Q_p$. Put $U = \Gal(F_\infty/\Q_p)$ and $F_m = F_\infty^{U^{p^m}}$. When $m=0$, we shall write $F=F_0=\Qp$.

	The two-variable Perrin-Riou map constructed by Loeffler--Zerbes \cite{loefflerzerbes14} is given by the projective limit 
	\begin{align*}
		\cL_{T,F_\infty} = \varprojlim_m \cL_{T,F_m}: \varprojlim_m H^1_\Iw(F_m\Q_p^\cyc,T) 
		&\to \varprojlim_m \sH_{\infty}(\tilde{\Gamma}^{\cyc})\otimes_{\Q_p} \DD_{\cris}(T)\otimes_{\Z_p} \cO_{F_m}\\
		&\simeq \sH_{\infty}(\tilde{\Gamma}^{\cyc})\otimes_{\Q_p}\DD_\cris(T)\widehat{\otimes}_{\Z_p} S_{F_\infty/\Qp},
	\end{align*}
	where $\tilde{\Gamma}^{\cyc} = \Gal(F_\infty\Q_p^\cyc/F_\infty)\subset \Gal(F_\infty\Q_p^\cyc/K_\fp)$, $S_{F_\infty/\Qp}$ is the Yager module \cite[§3.2]{loefflerzerbes14}, and the transition maps in the inverse limits are given by corestrictions on the left, and trace maps on $\cO_{F_m}$ on the right \cite[Theorem 4.7]{loefflerzerbes14}. As a $\Lambda_{\Zp}(U)$-module, $S_{F_\infty/\Qp}$ is contained in $\Lambda_{\cO_{\hat{F}_\infty}}(U)$ and is free of rank one. We fix a basis $\cY$ of the $\Lambda_{\Zp}(U)$-module $S_{F_\infty/\Qp}$. 
	
	We fix an $\cO$-basis $\omega$ of $\mathrm{Fil}^0\mathbb{D}_{\cris}(T)$, then $v_1=\omega,v_2=\varphi(\omega)$ form an $\cO$-basis of $\mathbb{D}_{\cris}(T)$ (see, e.g., \cite[Lemma 3.1]{leiloefflerzerbes15}). Furthermore, for any intermediate extension $F_m$, we have Coleman maps
	\begin{align*}
		\Col_{\sharp/\flat,F_m}:H^1_{\Iw}(F_m\Q_p^\cyc,T)\to \Lambda(\Gal(F_m\Q_p^{\cyc}/F_m))\otimes_{\Z_p}\cO_{F_m},
	\end{align*}
	such that \cite[§2.3]{BL2}
	\begin{align*}
		\cL_{T,F_m} = (v_1,v_2)M_{\log}\begin{pmatrix}
			\Col_{\sharp,F_m}\\
			\Col_{\flat,F_m}\\
		\end{pmatrix},
	\end{align*}
	where $M_{\log}$ is the logarithm matrix, a $2\times 2$ matrix valued in $\sH_\infty(\Gal(F_m\Q_p^{\cyc}/F_m))\simeq \sH_\infty(\tilde{\Gamma}^{\cyc})$ that is independent of $m$. Taking the inverse limit, we thus obtain a factorization of the big logarithm:
	\begin{align}\label{eq:2-var-factorization}
		\cL_{T,F_\infty} = (v_1\ v_2)M_{\log} \begin{pmatrix}
			\Col_{\sharp,F_\infty}\\
			\Col_{\flat,F_\infty}\\
		\end{pmatrix},
	\end{align}
	where, for $\star\in \{\flat,\sharp\}$, 
	\begin{align*}
		\Col_{\star,F_\infty}:H^1_\Iw(\KK_{\infty,\fp},T)\to \Lambda(\tilde{\Gamma}^{\cyc})\widehat{\otimes} \cY\Lambda(U)\subset \Lambda_{\cO_{\hat{F}_\infty}}(\tilde{\Gamma}^{\cyc}\times U)
	\end{align*} 
	is the limit of $\Col_{\star,F_m}$.
	
	\begin{remark}\label{remark:image-basechange}
	    Since  $\cL_{T,F_\infty}$ is $\Lambda(U)$-linear \cite[paragraph after Definition 4.6]{loefflerzerbes14}, so are the Coleman maps defined above. As such, for $\star\in\{\sharp,\flat\}$, a consideration of the determinants of $\cL_{T,F_\infty}$ and $\Mlog$ shows that $\mathrm{im}(\Col_{\star,F_m})= \mathrm{im}(\Col_{\star,F})\otimes\cO_{F_m}\subseteq \Lambda(\Gal(F_m\Q_p^{\cyc}/F_m))\otimes_{\Z_p}\cO_{F_m}$. Thus
	    \begin{align*}
	        \mathrm{im}(\Col_{\star,F_\infty})= \mathrm{im}(\Col_{\star,F})\otimes\cY\Lambda(U)\subseteq \Lambda(\tilde{\Gamma}^{\cyc})\otimes\Lambda_{\cO_{\hat{F}_\infty}}(U).
	    \end{align*}
	\end{remark}
	
	We now project the factorization \eqref{eq:2-var-factorization} to the anticyclotomic subextension, following \cite[Theorem 5.1]{CastellaHsieh}. Let $K^{\rm ac}$ denote the anticyclotomic $\Z_p$-extension of $K$,\footnote{While it is denoted by $K_\infty$ in the main body of the article, here for clarity we use the notation $K^{\rm ac}$ instead.} and denote by $\Gamma^{\rm ac} = \Gal(K^{\rm ac}/K)\simeq \Gal(K^{\rm ac}_\fp/K_\fp)\simeq \Z_p$.
	\begin{theorem}
		By quotienting out $\mathfrak{J}=\ker(\Lambda(U\times \tilde{\Gamma}^{\cyc})\to \Lambda(\Gamma^{\rm ac}))$, the map $\cL_{T,F_\infty}$ descends to a $\Lambda(\Gamma^{\rm ac})$-linear map
		\begin{align*}
			\cL_{T}^{\ac}: H^1_\Iw(K^{\rm ac}_\fp, T) \to \sH_{\infty,\hat{F}_\infty}(\Gamma^{\rm ac})\otimes \DD_\cris(T).
		\end{align*}
		Therefore, there exist Coleman maps $\Col_{\sharp}^{\rm ac},\Col_{\flat}^{\rm ac}: H^1_\Iw(K^{\rm ac}_\fp,T)\to \Lambda(\Gamma^{\rm ac})$, being the projections of $\Col_{\sharp,F_\infty},\Col_{\flat,F_\infty}$ in $\Lambda(\Gamma^{\rm ac})$, such that the following factorization holds for all $z\in H^1_\Iw(K^{\rm ac}_\fp,T)$
		\begin{align*}
			\cL_{T}^{\ac}(z) = (v_1\ v_2)\overline{M}_{\log} \begin{pmatrix}
				\Col_{\sharp}^{\ac}(z)\\
				\Col_{\flat}^{\ac}(z)\\
			\end{pmatrix}.
		\end{align*}
		Here, $\overline{M}_{\log}$ denotes the image of $\Mlog$ in $M_2(\sH_{k-1,\fF}(\tilde{\Gamma}^\cyc)/\mathfrak{J})=M_2(\sH_{k-1,\fF}(\Gamma^{\rm ac}))$.
	\end{theorem}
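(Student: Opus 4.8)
The plan is to descend the two-variable factorization \eqref{eq:2-var-factorization} along the surjection $\Lambda(U\times\tilde\Gamma^{\cyc})\twoheadrightarrow\Lambda(\Gamma^{\ac})$ induced by the natural quotient $U\times\tilde\Gamma^{\cyc}\to\Gal(\KK_{\infty,\fp}/K_\fp)\to\Gamma^{\ac}$, exactly as in \cite[Theorem~5.1]{CastellaHsieh}. First I would make the source side explicit: the completion $\KK_{\infty,\fp}=F_\infty\Q_p^{\cyc}$ contains $K^{\ac}_\fp$ as the subextension cut out by $\mathfrak J$, and corestriction along $\KK_{\infty,\fp}/K^{\ac}_\fp$ gives a map $H^1_\Iw(\KK_{\infty,\fp},T)\to H^1_\Iw(K^{\ac}_\fp,T)$ which is surjective (indeed the relevant $H^2$-term vanishes, or one argues as in \emph{loc.\ cit.}); moreover $\cL_{T,F_\infty}$, $\Mlog$ and the two Coleman maps are all $\Lambda(U)$-linear, hence $\Lambda(U\times\tilde\Gamma^{\cyc})$-linear after the obvious identifications, so they all factor through $-\otimes_{\Lambda(U\times\tilde\Gamma^{\cyc})}\Lambda(\Gamma^{\ac})$. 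Reducing \eqref{eq:2-var-factorization} modulo $\mathfrak J$ and naming the images $\cL_T^{\ac}$, $\overline M_{\log}$, $\Col^{\ac}_{\sharp}$, $\Col^{\ac}_{\flat}$ then yields the asserted factorization for all $z$ in the image of corestriction; by surjectivity this is all of $H^1_\Iw(K^{\ac}_\fp,T)$.

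Next I would check that the targets are the stated ones. On the distribution side, $\sH_\infty(\tilde\Gamma^{\cyc})\widehat\otimes S_{F_\infty/\Qp}$ reduces modulo $\mathfrak J$ to $\sH_{\infty,\hat F_\infty}(\Gamma^{\ac})$ once one observes that the Yager module $S_{F_\infty/\Qp}\subset\Lambda_{\cO_{\hat F_\infty}}(U)$ becomes, after killing the augmentation-type ideal in the $U$-direction, a rank-one $\Zp$-module whose base change introduces precisely the coefficient ring $\cO_{\hat F_\infty}$; this is where the ``$\hat F_\infty$'' in $\sH_{\infty,\hat F_\infty}(\Gamma^{\ac})$ comes from, and it matches \cite[Remark~\ref{remark:image-basechange}]{} after projection. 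For the Coleman maps, Remark~\ref{remark:image-basechange} already identifies $\image(\Col_{\star,F_\infty})=\image(\Col_{\star,F})\otimes\cY\Lambda(U)$, and since $\Col_{\star,F}$ lands in $\Lambda(\tilde\Gamma^{\cyc})=\Lambda(\Gal(\Q_p^{\cyc}/\Q_p))$, its projection $\Col^{\ac}_\star$ lands in $\Lambda(\Gamma^{\ac})$ as claimed — the $\cY\Lambda(U)$ factor is exactly what $\mathfrak J$ collapses. Finally $\overline M_{\log}\in M_2(\sH_{k-1,\fF}(\tilde\Gamma^{\cyc})/\mathfrak J)=M_2(\sH_{k-1,\fF}(\Gamma^{\ac}))$ since $\Mlog$ is independent of $m$ and has entries in $\sH_\infty(\tilde\Gamma^{\cyc})$.

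The main obstacle I anticipate is not the formal diagram-chase but verifying that the descent is \emph{well-defined on the nose}, i.e.\ that $\cL_{T,F_\infty}$ genuinely kills $\mathfrak J\cdot H^1_\Iw$ (equivalently, that the two sides of \eqref{eq:2-var-factorization} are compatible with the inverse-limit transition maps in a way that survives the quotient) and that corestriction $H^1_\Iw(\KK_{\infty,\fp},T)\to H^1_\Iw(K^{\ac}_\fp,T)$ is surjective so that the factorization holds for \emph{every} $z$ rather than just a dense/saturated submodule. Both points are handled in \cite[\S5]{CastellaHsieh} for the relevant weights, and under our hypotheses (in particular $f$ of weight $2$, $p$ split in $K$, and the big-image assumption guaranteeing $H^0(K_{\infty,\fp},A[\varpi])=0$, cf.\ Lemma~\ref{lem:projection}) the local cohomology groups in question have the expected ranks and no stray torsion, so the surjectivity goes through. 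I would therefore present this theorem as a direct application of \cite[Theorem~5.1]{CastellaHsieh} combined with Remark~\ref{remark:image-basechange}, spelling out only the identifications of the source and target modules and the vanishing input needed for surjectivity of corestriction.
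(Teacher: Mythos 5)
Your proposal is correct and takes essentially the same route as the paper: the paper's proof is simply the observation that $V^{\Gal(K^{\rm ac}_\fp/K)}=0$ (citing \cite[Lemma~2.7]{KobOta}) followed by a direct appeal to \cite[Theorem~5.1]{CastellaHsieh}, which is exactly the descent-plus-vanishing argument you spell out in more detail (your vanishing input via Lemma~\ref{lem:projection} plays the same role as the paper's cited $V^{\Gal(K^{\rm ac}_\fp/K)}=0$).
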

	\begin{proof}
    Recall from \cite[Lemma 2.7]{KobOta} that $V^{\Gal(K^{\rm ac}_\fp/K)}=0$. Thus, the theorem follows from \cite[Theorem 5.1]{CastellaHsieh}.
	\end{proof}
	\begin{remark}\label{rmk:relative-position}
		As explained in \cite[Remark 5.2]{leisprung}, we may identify $\Gal(k_{\infty,\fp}/K_\fp)$ with $\Z_p^2$ such that $U$ is the first component and $\tilde{\Gamma}^{\cyc}$ is the second component. As such, $\Gamma^{\rm ac}$ corresponds to anti-diagonal elements
		\begin{align*}
			\{(a,-a)\in \Z_p^2: a\in \Z_p\}.
		\end{align*}
	\end{remark}

	\subsection{Evaluating the logarithm matrix}
	
	In what follows, we fix a family of primitive $p^n$-th roots of unity $\zeta_{p^n}$ and write $\epsilon_n = \zeta_{p^n}-1$. We may choose topological generators $\gamma^{\ur}\in U$ and $\gamma^\cyc\in \tilde{\Gamma}^\cyc$ resulting in a topological generator $\gamma$ of $ \Gamma^{\rm ac}$ given by
	\begin{align*}
		\gamma = (\gamma^\cyc/\gamma^\ur)^{1/2},
	\end{align*}
	which is possible by Remark \ref{rmk:relative-position} and that $p\ne 2$. By these choices, for $G\in \{U,\tilde{\Gamma}^{\cyc},\Gamma^{\rm ac}\}$, we regard $\sH_{\infty}(G)$ as the set of power series convergent on the open unit disc centered at 0 with variable $X_G$. To simplify notation, we write $Y=X_U$, $Z = X_{\tilde{\Gamma}^\cyc}$ and $X=X_{\Gamma^\ac}$. As $\gamma = (\gamma^\cyc/\gamma^\ur)^{1/2}$, the natural projection
	\begin{align*}
		\sH_\infty(U\times \tilde{\Gamma}^\cyc) = \sH_\infty(U)\hat{\otimes}\sH_\infty(\tilde{\Gamma}^\cyc) \to \sH_\infty(\Gamma^{\rm ac})
	\end{align*}
	is given by sending $f(Y,Z)$ to $f((1+X)^{-1}-1,X)$. 
	
	We now turn to the explicit description of $M_{\log}$ and thus $\overline{M}_{\log}$, following \cite[§2]{BL-mathZ}. Denote by $\Phi_{p^n}(Z)$ the $p^n$-th cyclotomic polynomial $\frac{(Z+1)^{p^n}-1}{(Z+1)^{p^{n-1}}-1}$.
	Additionally, recall the matrices
	\begin{align*}
		A=
		\begin{pmatrix}
			0 & -1/p\\
			1 & a_p/p
		\end{pmatrix}\quad and \quad
		Q_n(Z) =
		\begin{pmatrix}
			a_p & 1\\
			-\Phi_{p^{n+1}}(Z) & 0\\
		\end{pmatrix}\ (n\ge 0).
	\end{align*} 
	Proposition~2.5 \textit{ibid.} tells us that
	\begin{align*}
		M_{\log}(Y,Z) = M_{\log}(Z) = \lim_{n\to \infty} A(Z)^{n+1}Q_{n-1}(Z)\cdots Q_0(Z).
	\end{align*}
	Consequently,
	\begin{align*}
		\overline{M}_{\log}(X) = M_{\log}(X) = \lim_{n\to \infty} A(X)^{n+1}Q_{n-1}(X)\cdots Q_0(X).
	\end{align*}
	Henceforward we shall not distinguish $\overline{M}_{\log}$ from $M_{\log}$.
	
	Next, given $\phi=\begin{pmatrix}
		a & b \\ c & d\\
	\end{pmatrix}$ with entries valued in $\overline{\Q_p}$, following the notation introduced in \cite[Definition~4.4]{sprung13},  we write
	\begin{align*}
		\ord_p(\phi) = \begin{pmatrix}
			\ord_p(a) & \ord_p(b)\\
			\ord_p(c) & \ord_p(d)\\
		\end{pmatrix}.
	\end{align*}
	Further, we denote $\ord_p(a_p)$ by $v$. To state the result below, we recall from  \cite[Theorem 2.1]{colemanedixhoven} that the two roots of $X^2 - a_p X + p$ are distinct since $f$ is of weight 2.
	\begin{lemma}\label{lem:log-matrix}
		 Let $\alpha\ne \beta$ be the two roots of the Hecke polynomial $X^2 - a_p X + p$. Also let $S$ denote the matrix
		\begin{align*}
		    S = \begin{pmatrix}
		        1 & 1\\
		        -\alpha & -\beta
		    \end{pmatrix}.
		\end{align*}
		Then $M_{\log}(\epsilon_n)$ is of the form
		\begin{align*}(\alpha-\beta)^{-1}S
		    \begin{pmatrix}
		        -s_1/\beta^n & -s_2/\beta^n\\
		        s_1/\alpha^n & s_2/\alpha^n
		    \end{pmatrix},
		\end{align*}
		for some $s_1,s_2\in \overline{\Qp}$ of $p$-adic valuations $v\boldsymbol{1}_{2\nmid n}+\sum_{1\le i\le n/2}p^{-2i+1}$ and $v\boldsymbol{1}_{2\mid n} + \sum_{1\le i< n/2}p^{-2i}$ respectively.
	\end{lemma}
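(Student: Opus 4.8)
The plan is to compute $M_{\log}(\epsilon_n)$ directly from the limiting formula $M_{\log} = \lim_{n\to\infty} A^{n+1}Q_{n-1}\cdots Q_0$, exploiting the fact that when $Z$ is specialized to $\epsilon_n$, the cyclotomic polynomials $\Phi_{p^{k}}(\epsilon_n)$ behave in a controlled way: $\Phi_{p^{k}}(\epsilon_n) = 0$ for $k \le n$ (since $\epsilon_n = \zeta_{p^n}-1$) and $\Phi_{p^{k}}(\epsilon_n)$ has explicit $p$-adic valuation for $k > n$. So I would first observe that in $Q_k(\epsilon_n) = \begin{pmatrix} a_p & 1 \\ -\Phi_{p^{k+1}}(\epsilon_n) & 0 \end{pmatrix}$, the bottom-left entry vanishes precisely when $k+1 \le n$, i.e. for $k = 0, 1, \dots, n-1$; so in the product $Q_{n-1}(\epsilon_n)\cdots Q_0(\epsilon_n)$ all but possibly none of the factors are upper-triangular with zero lower-left entry, hence the whole product truncated at $Q_{n-1}$ is upper-triangular. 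The tail $A^{n+1}Q_\infty \cdots Q_n$ then has to be analyzed separately, and this is where the eigenbasis of $A$ enters.

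\medskip

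The key computational device is diagonalizing $A$. Since $A$ has characteristic polynomial related to $X^2 - a_p X + p$ (one checks $A = \begin{pmatrix} 0 & -1/p \\ 1 & a_p/p\end{pmatrix}$ has eigenvalues $1/\alpha$ and $1/\beta$ where $\alpha,\beta$ are the Hecke roots), we can write $A = S \cdot \mathrm{diag}(1/\alpha, 1/\beta) \cdot S^{-1}$ for a suitable change-of-basis matrix; I expect $S = \begin{pmatrix} 1 & 1 \\ -\alpha & -\beta \end{pmatrix}$ (or its inverse-transpose, up to scaling by $(\alpha-\beta)^{-1}$) to be the right choice matching the statement. Then $A^{n+1} = S \, \mathrm{diag}(\alpha^{-n-1}, \beta^{-n-1}) \, S^{-1}$, and the powers $\alpha^{-n}, \beta^{-n}$ account for the $1/\alpha^n$, $1/\beta^n$ appearing in the claimed form. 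The remaining finite product $Q_{n+j}(\epsilon_n)\cdots Q_n(\epsilon_n)$ (as $j \to \infty$), conjugated appropriately, converges; its entries, together with the normalization from $S^{-1}$, produce the scalars $s_1, s_2$. To extract the valuations of $s_1, s_2$ I would track $\ord_p$ through the product: the valuation contributions come from (i) the entries $\Phi_{p^{k+1}}(\epsilon_n)$ for $k \ge n$, which satisfy $\ord_p(\Phi_{p^{k+1}}(\epsilon_n)) = p^{n-k-1}$ for $k \ge n$ (so the first such term contributes $\ord_p(\Phi_{p^{n+1}}(\epsilon_n)) = 1/p$, next $1/p^2$, etc.), alternating with the $a_p$ entries contributing $v = \ord_p(a_p)$. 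Keeping track of parity — whether $n$ is even or odd determines whether the product of $Q$-matrices starts/ends on an $a_p$ or a $\Phi$ term when sorted into the two "channels" — yields the two different sums $v\mathbf{1}_{2\nmid n} + \sum_{1\le i\le n/2} p^{-2i+1}$ and $v\mathbf{1}_{2\mid n} + \sum_{1\le i < n/2} p^{-2i}$. The alternating structure and the geometric decay $p^{-k}$ in the $\Phi$-valuations is exactly what separates the odd-indexed from the even-indexed powers of $p$.

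\medskip

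The main obstacle, I expect, is the bookkeeping in the tail product: pinning down the exact valuations $s_1, s_2$ requires showing that no cancellation occurs among the leading terms as one multiplies out $\prod_{k \ge n} Q_k(\epsilon_n)$, i.e. that the minimal-valuation term in each matrix entry genuinely survives. This is the kind of argument that appears in Sprung's and Büyükboduk--Lei's work on logarithm matrices, and I would invoke or adapt their non-vanishing/valuation lemmas (e.g. the estimates in \cite{sprung13, BL-mathZ}) rather than redo them from scratch. A secondary subtlety is handling the limit carefully: $M_{\log}(\epsilon_n)$ is the value at $\epsilon_n$ of the limit function $M_{\log}(Z) = \lim_n A(Z)^{n+1}Q_{n-1}(Z)\cdots Q_0(Z)$, and one must justify that evaluation commutes with this limit (uniform convergence on the relevant disc, which is standard for these Perrin-Riou-type matrices) so that the truncation-at-$n$ reasoning above is legitimate. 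Once these two points are secured, collecting terms into the form $(\alpha-\beta)^{-1}S \begin{pmatrix} -s_1/\beta^n & -s_2/\beta^n \\ s_1/\alpha^n & s_2/\alpha^n\end{pmatrix}$ is a matter of matching up the diagonalization with the column structure, and reading off the valuations completes the proof.
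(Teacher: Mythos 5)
Your overall architecture --- specialize the limit formula for $M_{\log}$ at $\epsilon_n$, diagonalize $A$ via $S$ to produce the factors $\alpha^{-n},\beta^{-n}$, and quote the Lei--Loeffler--Zerbes/Sprung valuation estimates for the product of $Q$-matrices --- is the right one and matches the paper. But your analysis of $\Phi_{p^{k+1}}(\epsilon_n)$ is backwards, and this derails the computation. Since $\Phi_{p^{k+1}}(Z)=\sum_{j=0}^{p-1}(1+Z)^{jp^{k}}$ and $\epsilon_n=\zeta_{p^n}-1$, one has $\Phi_{p^{k+1}}(\epsilon_n)=0$ only for $k=n-1$; for $k\ge n$ it equals $p$ exactly (not an element of valuation $p^{n-k-1}$), and for $k\le n-2$ it is nonzero of valuation $p^{-(n-k-1)}$. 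Consequently: (i) the tail is trivial, because $\Phi_{p^{i+1}}(\epsilon_n)=p$ gives $Q_i(\epsilon_n)=A^{-1}$ for all $i\ge n$, so the partial products stabilize and $M_{\log}(\epsilon_n)=A^{n+1}Q_{n-1}(\epsilon_n)\cdots Q_0(\epsilon_n)$ on the nose --- there is no convergence or interchange-of-limits issue and no infinite tail product to estimate; (ii) the head product $Q_{n-1}(\epsilon_n)\cdots Q_0(\epsilon_n)$ is \emph{not} a product of matrices with vanishing lower-left entries (only the leftmost factor $Q_{n-1}(\epsilon_n)=\left(\begin{smallmatrix} a_p & 1\\ 0&0\end{smallmatrix}\right)$ degenerates, which already forces the product to have the form $\left(\begin{smallmatrix} s_1 & s_2\\ 0 & 0\end{smallmatrix}\right)$), and it is precisely this head product that carries all the fractional valuations $\sum p^{-2i+1}$, $\sum p^{-2i}$, via \cite[Proposition~4.6]{leiloefflerzerbes15}. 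Your proposal assigns the geometric decay of valuations to the factors with $k\ge n$ and treats the factors with $k<n$ as contributing nothing, which is exactly the opposite of what happens; carried out as written, the bookkeeping would not produce the stated $s_1,s_2$.

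Once these points are corrected, the rest goes through as you indicate: writing $A=p^{-1}S\,\mathrm{diag}(\alpha,\beta)\,S^{-1}$ and multiplying $A^{n+1}$ against $\left(\begin{smallmatrix} s_1&s_2\\0&0\end{smallmatrix}\right)$ yields the stated form by a short explicit calculation, and the valuations of $s_1,s_2$ (including the parity-dependent appearance of $v$) are read off directly from the cited proposition rather than re-derived from a non-cancellation argument.
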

	\begin{proof}
		Note that for $i\ge n$, we have $\Phi_{p^{i+1}}(\epsilon_n) = p$,  which implies that $A = Q_i(\epsilon_n)^{-1}$. This implies that $M_{\log}(\epsilon_n) = A^{n+1}Q_{n-1}\cdots Q_0(\epsilon_n)$. By \cite[Proposition 4.6]{leiloefflerzerbes15},  we have
		\begin{align*}
			\ord_p(Q_{n-1}\cdots Q_0(\epsilon_n))=
			\begin{cases}
				\begin{pmatrix}
		    	v+\sum_{i=1}^{\frac{n-1}{2}} \frac{1}{p^{2i-1}} & \sum_{i=1}^{\frac{n-1}{2}} \frac{1}{p^{2i}}\\
					\infty & \infty\\
				\end{pmatrix}
				& \text{if }n\text{ is odd,}\\
				\begin{pmatrix}
					\sum_{i=1}^{\frac{n}{2}} \frac{1}{p^{2i-1}} & v+\sum_{i=1}^{\frac{n}{2}-1} \frac{1}{p^{2i}}\\
					\infty & \infty\\
				\end{pmatrix}
				& \text{if }n\text{ is even}.\\
			\end{cases}
		\end{align*}
		
		For the matrix $A$, we have the diagonalization
		\begin{align*}
			A = p^{-1}S\begin{pmatrix}
				\alpha & 0\\ 0 & \beta \\
			\end{pmatrix}S^{-1}.
		\end{align*}
		Write $Q_{n-1}\cdots Q_0(\epsilon_n) = \begin{pmatrix}
			s_1 & s_2 \\ 0 & 0\\
		\end{pmatrix}$, we have
		\begin{align*}
			S^{-1}A^{n+1}Q_{n-1}\cdots Q_0(\epsilon_n)
			&= p^{-n-1}
				\begin{pmatrix}
					\alpha^{n+1} & 0 \\ 0 & \beta^{n+1}
				\end{pmatrix} S^{-1}
				\begin{pmatrix}
					s_1 & s_2 \\ 0 & 0\\
				\end{pmatrix}\\
			&=p^{-n-1}
				\begin{pmatrix}
					\alpha^{n+1} & 0 \\ 0 & \beta^{n+1}
				\end{pmatrix}
				\begin{pmatrix}
					-\beta s_1 & -\beta s_2\\
					\alpha s_1 & \alpha s_2
				\end{pmatrix}
				\frac{1}{(\alpha-\beta)}\\
			&=\frac{p^{-n-1}}{\alpha-\beta}
				\begin{pmatrix}
					-p\alpha^n & 0\\
					0 & p\beta^n\\
				\end{pmatrix}
				\begin{pmatrix}
					s_1  & s_2\\
					s_1 & s_2\\
				\end{pmatrix}\\
			&=\frac{1}{\alpha-\beta}
			\begin{pmatrix}
					-s_1/\beta^n  & -s_2/\beta^n\\
					s_1/\alpha^n & s_2/\alpha^n\\
			\end{pmatrix}.
		\end{align*}
	\end{proof}

	\subsection{Evaluation of Coleman maps}
	
	We shall evaluate the images of the Coleman maps at $\epsilon_n$ using \cite[Proposition 2.2]{LL-IJNT}. Write $\overline{\cY}$ as the anticyclotomic projection of $\cY$, and we define $\underline{\Col} = (\Col_\sharp^\ac,\Col_\flat^\ac): \HIw(K_\fp^\ac,T)\to \Lambda(\Gamma^{\ac})^{\oplus 2}$.
	\begin{proposition}
		Let
		\begin{align*}
		    I_v = \{(G_1,G_2)\in \overline{\cY}\Lambda(\Gamma^{\ac})^{\oplus 2}\subseteq \Lambda_{\cO_{\hat{F}_\infty}}(\Gamma^{\ac})^{\oplus 2}: (p-1)G_1(0) = (2-a_p)G_2(0)\}.
		\end{align*}
		Then $\mathrm{im}(\underline{\Col})=I_v$.
	\end{proposition}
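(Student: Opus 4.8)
The plan is to compute the image of $\uCol$ by first describing the image of the two-variable Coleman maps $\Col_{\star,F}$ over the base layer $F=\QQ_p$, then passing to $F_\infty$ via Remark~\ref{remark:image-basechange}, and finally projecting to the anticyclotomic line. First I would invoke the known description of $\image(\Col_{\sharp,F})$ and $\image(\Col_{\flat,F})$ inside $\Lambda(\tilde\Gamma^\cyc)$ coming from \cite[Proposition~2.2]{LL-IJNT} (together with \cite{leiloefflerzerbes15}): the pair $(\Col_{\sharp,F},\Col_{\flat,F})$ lands in $\Lambda(\tilde\Gamma^\cyc)^{\oplus 2}$ and is characterized by a single congruence relating the evaluations of the two components at the trivial character, namely one of the form $(p-1)\Col_\sharp(0)=(2-a_p)\Col_\flat(0)$. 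The factorization $\cL_{T,F}=(v_1\ v_2)\Mlog\left(\begin{smallmatrix}\Col_{\sharp,F}\\ \Col_{\flat,F}\end{smallmatrix}\right)$ together with the fact that $\cL_{T,F}$ is surjective onto its natural target (a free module) forces the image of $\left(\begin{smallmatrix}\Col_{\sharp,F}\\ \Col_{\flat,F}\end{smallmatrix}\right)$ to be exactly the saturation determined by $\det\Mlog$; the relevant relation at $X=0$ is read off from the value $\Mlog(0)$, which by the recursion $\Mlog=\lim A^{n+1}Q_{n-1}\cdots Q_0$ specializes (using $\Phi_{p^{n+1}}(0)=p$ for all but finitely many terms, cf.\ the proof of Lemma~\ref{lem:log-matrix}) to an explicit matrix whose kernel/cokernel yields the coefficients $p-1$ and $2-a_p$.

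Next I would base-change to $F_\infty$. By Remark~\ref{remark:image-basechange}, $\image(\Col_{\star,F_\infty})=\image(\Col_{\star,F})\otimes \cY\Lambda(U)$ inside $\Lambda(\tilde\Gamma^\cyc)\widehat\otimes\Lambda_{\cO_{\hat F_\infty}}(U)$, so the two-variable image $\image\big((\Col_{\sharp,F_\infty},\Col_{\flat,F_\infty})\big)$ is the $\cY\Lambda(U)$-span of the one-variable image; the defining congruence at $Y$ arbitrary but $Z=0$ persists because the relation is $\Lambda(U)$-linear and the extra variable $Y$ only scales by the Yager element $\cY$. Then I would apply the anticyclotomic projection $\sH_\infty(U\times\tilde\Gamma^\cyc)\to\sH_\infty(\Gamma^\ac)$, which sends $f(Y,Z)$ to $f((1+X)^{-1}-1,X)$ and $\cY\mapsto\overline\cY$. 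Under this specialization the trivial character of $\Gamma^\ac$ corresponds to $X=0$, which maps to $(Y,Z)=(0,0)$; hence the single congruence $(p-1)G_1(0)=(2-a_p)G_2(0)$ survives the projection verbatim, giving $\image(\uCol)\subseteq I_v$.

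For the reverse inclusion, I would argue that projection is surjective onto the submodule cut out by this one linear condition: the anticyclotomic specialization of a two-variable Coleman map can realize any prescribed value at $X=0$ subject only to the congruence, because $\image(\Col_{\star,F_\infty})$ is a \emph{free} $\overline\cY\Lambda(U)$-stable module of the expected rank and the projection $\Lambda(U\times\tilde\Gamma^\cyc)\to\Lambda(\Gamma^\ac)$ is surjective with the relevant evaluation functional factoring through it. Concretely, pick classes $z\in\HIw(\KK_{\infty,\fp},T)$ whose $\Col_{\sharp,F_\infty},\Col_{\flat,F_\infty}$ generate the one-variable image over $\Lambda(U)\widehat\otimes\Lambda(\tilde\Gamma^\cyc)$; their anticyclotomic projections then generate $I_v$ by a Nakayama argument (the target is finitely generated over the local ring $\Lambda(\Gamma^\ac)$, and modulo the maximal ideal the congruence is the only obstruction). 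The main obstacle I anticipate is keeping careful track of the Yager/unramified variable through the base change and the projection — in particular verifying that the congruence coefficients are exactly $p-1$ and $2-a_p$ rather than unit multiples thereof, which requires pinning down $\Mlog(0)$ and the precise normalization of $\cY$ and $\overline\cY$; this is where \cite[Proposition~2.2]{LL-IJNT} does the essential work and must be quoted in the right form.
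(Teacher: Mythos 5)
Your proposal follows essentially the same route as the paper: quote \cite[Proposition~2.2]{LL-IJNT} for the image of $(\Col_{\sharp,F},\Col_{\flat,F})$ over the base layer, base-change to $F_\infty$ via Remark~\ref{remark:image-basechange}, and project to the anticyclotomic line. The extra machinery you propose (re-deriving the coefficients $p-1$ and $2-a_p$ from $\Mlog(0)$, and the Nakayama argument for surjectivity of the projection) is not needed, since the cited proposition already gives the congruence in the stated form and the anticyclotomic Coleman maps are by definition the projections of the two-variable ones.
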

	\begin{proof}
		It follows from \cite[Proposition 2.2]{LL-IJNT} that 
		\begin{align*}
		    \mathrm{im}(\Col_{\sharp,F},\Col_{\flat,F}) = \{(G_1,G_2)\in \Lambda(\Gal(\Q_p^{\cyc}/\Q_p))^{\oplus 2}: (p-1)G_1(0) = (2-a_p)G_2(0)\}.
		\end{align*}
		Thus, the affirmation on $\mathrm{im}(\underline{\Col})$ follows from Remark \ref{remark:image-basechange}.
	\end{proof}
	
	\begin{lemma}\label{lem:period-invertible}
    The period $\cY\in \Lambda_{\cO_{\hat{F}_\infty}}(U)$ is invertible, and thus so is $\overline{\cY}\in \Lambda_{\cO_{\hat{F}_\infty}}(\Gamma^{\ac})$.
	\end{lemma}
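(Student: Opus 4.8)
The plan is to show that $\cY$, as an element of the completed group ring $\Lambda_{\cO_{\hat F_\infty}}(U)$, lies outside the maximal ideal, so that it is a unit; the claim about $\overline{\cY}$ then follows since the anticyclotomic projection $\Lambda_{\cO_{\hat F_\infty}}(U)\to \Lambda_{\cO_{\hat F_\infty}}(\Gamma^{\ac})$ is a ring homomorphism and hence sends units to units. To analyze $\cY$ I would first recall that $\cY$ is a fixed basis of the Yager module $S_{F_\infty/\Qp}$ over $\Lambda_{\Zp}(U)$; by \cite[\S3.2]{loefflerzerbes14}, $S_{F_\infty/\Qp}$ is the inverse limit $\varprojlim_m \cO_{F_m}$ along the trace maps, where $U=\Gal(F_\infty/\Qp)$ acts naturally, and the Yager map identifies this with a rank-one free $\Lambda_{\Zp}(U)$-submodule of $\Lambda_{\cO_{\hat F_\infty}}(U)$.

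First I would reduce the invertibility of $\cY$ to a statement modulo the maximal ideal of $\Lambda_{\cO_{\hat F_\infty}}(U)$, i.e.\ modulo $(\vpi, Y)$ after identifying $\Lambda_{\cO_{\hat F_\infty}}(U)$ with $\cO_{\hat F_\infty}[[Y]]$ via a topological generator $\gamma^{\ur}$ of $U$. Concretely, a power series in $\cO_{\hat F_\infty}[[Y]]$ is a unit precisely when its constant term is a unit in $\cO_{\hat F_\infty}$, equivalently when it is nonzero modulo the maximal ideal. So the task becomes: show the image of $\cY$ under the augmentation-type map ``evaluate at the trivial character of $U$'' is a $p$-adic unit. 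Under the Yager isomorphism, evaluating a Yager element at a (possibly trivial) character $\eta$ of $U$ recovers, up to a Gauss-sum/period normalization, the corresponding component in $\cO_{F_m}$; for the trivial character this is governed by the different/discriminant of the unramified extension $F_\infty/\Qp$, which is trivial since $F_\infty/\Qp$ is unramified.

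The cleanest route is therefore to invoke the known fact that for an \emph{unramified} $\Z_p$-extension the Yager module basis is a unit: the Yager module $S_{F_\infty/\Qp}$ over $\Lambda_{\Zp}(U)$ is free of rank one and, because $F_\infty/\Qp$ is unramified, a normal basis generator of $\cO_{F_m}$ over $\Z_p[\Gal(F_m/\Qp)]$ exists at each finite level (by normal basis theorems for unramified extensions of local fields, the ring of integers is free of rank one over the group ring), and these can be chosen compatibly under the trace maps; the resulting limit is exactly a $\Lambda_{\Zp}(U)$-generator $\cY$ whose image in $\Lambda_{\cO_{\hat F_\infty}}(U)$ has unit constant term. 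I expect the main obstacle to be pinning down this last normalization precisely — i.e.\ checking that the specific basis $\cY$ fixed earlier in the excerpt (rather than some twist of it by a unit times a non-unit) does have unit image under augmentation, which amounts to the statement that the ``period'' attached to the unramified extension is a unit. Once that is granted, invertibility of $\cY$ is immediate, and invertibility of $\overline{\cY}$ follows by applying the projection homomorphism, completing the proof.
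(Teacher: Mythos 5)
Your approach is essentially the paper's: both reduce invertibility to showing that the constant term of $\cY$ (its image under augmentation on $U$) is a unit, and both obtain this from the fact that $\cY$ arises from a trace-compatible system of integral normal basis generators $(x_{F_m})$ of the unramified tower. The step you flag as the remaining obstacle is closed in one line in the paper: under the Yager map $x\mapsto\sum_{\sigma}x^{\sigma}[\sigma^{-1}]$ the augmentation of the level-$m$ term is $\Tr_{F_m/F}(x_{F_m})$, which by trace-compatibility equals $x_F$, a unit of $\cO_F=\Zp$ since $\cO_F\cdot x_F=\cO_F$.
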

	\begin{proof}
	    The period $\cY$ is constructed from choosing a compatible system of integral normal basis generator $(x_{F_m})_{m\ge 0}\in \varprojlim_m \cO_{F_m}$, which is identified with an element of $\Lambda_{\cO_{\hat{F}_\infty}}(U)$ via the maps 
	    \begin{align*}
	        y_{F_m/F}: \cO_{F_m} \to \cO_{F_m}[\Gal(F_m/F)],\quad x\mapsto \sum_{\sigma\in\Gal(F_m/F)} x^{\sigma}[\sigma^{-1}],
	    \end{align*} (see \cite[\S3.2]{loefflerzerbes14}). 
	    Thus, the constant term of $\cY$ as a power series is $\lim_m \Tr_{F_m/F}(x_{F_m})=x_F$, which is a unit of $\cO_F$ since $\cO_F \cdot x_F = \cO_F$, from which  the lemma follows.
	\end{proof}
	Henceforth, we shall use the same notation for $\cY$ and $\overline{\cY}$ for presentational simplicity.
	\begin{corollary}\label{cor:image-basis-col}
	    There exists a $\Lambda(\Gamma^\ac)$-basis $(z_1,z_2)$ of $H^1_{\Iw}(K_\fp,T)$ such that
	    \begin{align*}
	        \underline{\Col}(z_1) = \cY(X,0), \  \underline{\Col}(z_2) = \cY(a',1),
	    \end{align*}
	    where $a'=\frac{2-a_p}{p-1}$.
	\end{corollary}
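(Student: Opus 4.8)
The goal is to produce an Iwasawa-module basis $(z_1,z_2)$ of $H^1_{\Iw}(K_\fp,T)$ whose image under $\uCol=(\Col_\sharp^\ac,\Col_\flat^\ac)$ realizes two prescribed generators of $I_v$. The plan is to first establish that $H^1_{\Iw}(K_\fp,T)$ is free of rank $2$ over $\Lambda(\Gamma^\ac)$ and that $\uCol$ is injective with image exactly $I_v$; the second fact is the preceding proposition, and the rank-two freeness follows from the non-ordinarity hypothesis together with the vanishing $V^{\Gal(K^\ac_\fp/K_\fp)}=0$ recalled from \cite[Lemma 2.7]{KobOta} (this kills the error terms in the local Euler characteristic / Tor computations, so $H^1_{\Iw}$ has no torsion and the rank is $\dim_\fF V=2$). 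Thus $\uCol$ identifies $H^1_{\Iw}(K_\fp,T)$ with the submodule $I_v\subseteq \cY\Lambda(\Gamma^\ac)^{\oplus 2}$, and it suffices to exhibit a $\Lambda(\Gamma^\ac)$-basis of $I_v$ of the required shape.

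Next I would analyze $I_v$ directly as a $\Lambda(\Gamma^\ac)$-module. By Lemma~\ref{lem:period-invertible}, $\cY=\overline{\cY}$ is a unit in $\Lambda_{\cO_{\hat F_\infty}}(\Gamma^\ac)$, so dividing by $\cY$ identifies $I_v$ with
\[
\widetilde I_v=\{(G_1,G_2)\in\Lambda(\Gamma^\ac)^{\oplus2}:(p-1)G_1(0)=(2-a_p)G_2(0)\}.
\]
Here the "evaluation at $0$" means the augmentation $\Lambda(\Gamma^\ac)\to\cO$, $G\mapsto G(0)$; note $p-1\in\cO^\times$ since $p\ge5$. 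Setting $a'=\tfrac{2-a_p}{p-1}\in\cO$, the defining relation reads $G_1(0)=a'G_2(0)$. I claim the two elements
\[
w_1=(X,0),\qquad w_2=(a',1)
\]
form a $\Lambda(\Gamma^\ac)$-basis of $\widetilde I_v$: both lie in $\widetilde I_v$ (for $w_1$, $X$ augments to $0$; for $w_2$, $a'=a'\cdot1$), they are $\Lambda(\Gamma^\ac)$-linearly independent because their second coordinates are $0$ and $1$, and given any $(G_1,G_2)\in\widetilde I_v$ one writes $(G_1,G_2)-G_2\cdot w_2=(G_1-a'G_2,0)$ whose first entry augments to $G_1(0)-a'G_2(0)=0$, hence lies in the augmentation ideal $X\Lambda(\Gamma^\ac)$, so equals $Q(X)\cdot w_1$ for a unique $Q\in\Lambda(\Gamma^\ac)$. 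Multiplying back through by the unit $\cY$ gives that $(\cY(X,0),\cY(a',1))$ is a $\Lambda(\Gamma^\ac)$-basis of $I_v$, which matches the asserted formulas after noting $\cY(X,0)$ and $\cY(a',1)$ are the two displayed elements (with $\cY=\cY(X)$ understood as the period regarded in the anticyclotomic variable).

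Finally I would pull this back: define $(z_1,z_2)$ to be the unique elements of $H^1_{\Iw}(K_\fp,T)$ with $\uCol(z_i)$ equal to the corresponding basis vector of $I_v$, using injectivity and the surjectivity onto $I_v$ from the preceding proposition; since $\uCol$ is a $\Lambda(\Gamma^\ac)$-module isomorphism onto $I_v$ and $(z_1,z_2)$ map to a basis, $(z_1,z_2)$ is a basis. The main obstacle is the input that $\uCol\colon H^1_{\Iw}(K_\fp,T)\xrightarrow{\sim}I_v$ is an \emph{isomorphism} of free rank-two $\Lambda(\Gamma^\ac)$-modules rather than merely a map with the stated image: one must rule out a kernel and confirm the source is free of rank two, which is exactly where non-ordinarity of $f$ at $p$ (hence the two-dimensional fundamental-character picture of $A[\varpi]$, as in the proof of Lemma~\ref{lem:projection}) is used to force $H^1_{\Iw}(K_\fp,T)$ to be torsion-free; the determinant comparison in Remark~\ref{remark:image-basechange} together with the factorization \eqref{eq:2-var-factorization} then pins down both the rank and the image.
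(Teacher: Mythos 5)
Your proposal is correct and follows essentially the same route as the paper: the paper's proof simply asserts that $(X,0)$ and $(a',1)$ form a $\Lambda(\Gamma^\ac)$-basis of the image of $\cY^{-1}\uCol$ (which you verify explicitly via the augmentation-ideal argument) and then invokes the injectivity of $\uCol$, citing \cite[Proof of Corollary 4.6]{BL2} rather than rederiving it from torsion-freeness and rank considerations as you do.
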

	\begin{proof}
	    It can be checked that $X\oplus 0,$ and $a'\oplus 1$ form a $\Lambda$-basis of the image of $\cY^{-1}\uCol$. Thus, the result follows from the injectivity of $\uCol$ (see \cite[Proof of Corollary 4.6]{BL2}).
	\end{proof}
	
	\vspace{3mm}
	
	Next we compare the maps $\cL_{T}^\ac$ and $\tilde{\Omega}^{\epsilon}_{V,1}$ constructed by Kobayashi \cite[\S10]{Kob}, using the explicit reciprocity law.

	\begin{lemma}\label{lem:exp-log-composition}
	There exists a unit in $u^\epsilon_T\in\Lambda_{\cO_{\hat F_\infty}}(\Gamma^\ac)$ such that 
	\[
	\cL^\ac_T\circ \tilde{\Omega}^{\epsilon}_{V,1}=  u^\epsilon_T\ell_0,
	\]
	where $\ell_0=\log(\gamma)/\log(\kappa(\gamma))$ and $\kappa$ is the Lubin--Tate character attached to the extension $K^\ac_\p/\Qp$.
	\end{lemma}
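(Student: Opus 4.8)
The plan is to recognize the asserted identity as the $h=1$ instance of Perrin--Riou's reciprocity law: the composition of the big exponential $\tilde{\Omega}^{\epsilon}_{V,1}$ with the big logarithm $\cL^{\ac}_T$ is, up to a unit, multiplication by Perrin--Riou's operator $\prod_{j=0}^{h-1}\ell_j$, and since $V_f$ is of weight $2$, with Hodge--Tate weights $\{0,-1\}$, one is in the range where $h=1$ suffices and this product degenerates to the single factor $\ell_0$. Both maps are $\Lambda(\Gamma^{\ac})$-linear and are determined by their specializations at characters of $\Gamma^{\ac}$: $\tilde{\Omega}^{\epsilon}_{V,1}$ by Kobayashi's construction in \cite[\S10]{Kob}, which rests on an explicit reciprocity law, and $\cL^{\ac}_T$ by \cite[Theorem~4.7]{loefflerzerbes14} together with the anticyclotomic projection of \cite[Theorem~5.1]{CastellaHsieh}. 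So the whole argument reduces to computing the specializations of the composite.

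First I would note that, after the semi-local identifications already recorded in this appendix (in particular \cite[Proposition~5.15]{Kob} and \cite[Theorem~5.1]{CastellaHsieh}) and the identification $\Dcris(T_f)\otimes_{\Zp}\Qp=\Dcris(V_f)$, the map $\cL^{\ac}_T\circ\tilde{\Omega}^{\epsilon}_{V,1}$ is an $\sH_{\infty,\hat{F}_\infty}(\Gamma^{\ac})$-linear endomorphism of $\sH_{\infty,\hat{F}_\infty}(\Gamma^{\ac})\otimes\Dcris(V_f|_{G_{K_\fp}})$; by the $\vp$-equivariance of the two maps this endomorphism is scalar, hence is multiplication by a single $F\in\sH_{\infty,\hat{F}_\infty}(\Gamma^{\ac})$. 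Since the locally algebraic characters of $\Gamma^{\ac}$ have no common zero in the rigid disc underlying $\sH_{\infty,\hat{F}_\infty}(\Gamma^{\ac})$, it is enough to prove $\chi(F)=\chi(u^{\epsilon}_T\ell_0)$ for all such $\chi$. For a character of finite order $p^n$ (and its Tate/Lubin--Tate twists) the interpolation formula for $\tilde{\Omega}^{\epsilon}_{V,1}$ expresses $\chi\circ\tilde{\Omega}^{\epsilon}_{V,1}$ through the Bloch--Kata exponential $\exp_{K_{\fp,n},V_f(\chi)}$ times explicit $\Gamma$- and Euler-factors, while the interpolation formula for $\cL^{\ac}_T$ expresses $\chi\circ\cL^{\ac}_T$ through the dual exponential $\exp^{*}$ (respectively the logarithm at the trivial twist); composing and using $\exp^{*}\circ\exp=\mathrm{id}_{\Dcris}$ (and $\log$ in place of the composite at the $j=0$ twist) shows $\chi(F)=\chi(\ell_0)$ times the ratio of the two packets of normalization factors.

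The remaining point, and the main obstacle, is to check that this ratio of normalization factors is an element $u^{\epsilon}_T$ of the \emph{integral} ring $\Lambda_{\cO_{\hat{F}_\infty}}(\Gamma^{\ac})$ which is moreover a \emph{unit}, rather than merely a nonzero element of $\sH_{\infty,\hat{F}_\infty}(\Gamma^{\ac})$. The factors that appear are: the $p$-adic $\Gamma$-factor $\Gamma^{*}(j)$ for the single index $j=0$ (harmless since $h=1$), Euler-type factors of the shape $(1-\vp)$ and $(1-p^{-1}\vp)^{-1}$ acting on $\Dcris(V_f)$, and the period $\cY$ relating Kobayashi's semi-local normalization over $\hat{H}$ and $\cG_{p^\infty}$ to the Loeffler--Zerbes/Castella--Hsieh normalization. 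Using that $f$ is non-ordinary at $p$, so that $\vp$ acts on $\Dcris(V_f)$ with eigenvalues of positive, non-integral valuation and the Euler factors are units, together with Lemma~\ref{lem:period-invertible} (which gives $\cY\in\Lambda_{\cO_{\hat{F}_\infty}}(\Gamma^{\ac})^{\times}$) and the weight-$2$ hypothesis, one verifies that every such factor is a unit; their product is the desired $u^{\epsilon}_T$. The genuinely delicate bookkeeping is the precise matching of the two normalizations, which is why the identifications set up earlier in the appendix are invoked throughout; granting those, the identity $\cL^{\ac}_T\circ\tilde{\Omega}^{\epsilon}_{V,1}=u^{\epsilon}_T\ell_0$ follows.
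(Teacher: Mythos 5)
Your guiding intuition---that the statement is the $h=1$ case of Perrin--Riou's reciprocity law, with $\prod_{j=0}^{h-1}\ell_j$ collapsing to $\ell_0$---is correct, but the argument you sketch has two genuine gaps. First, the identity $\exp^*\circ\exp=\mathrm{id}_{\Dcris}$ is false: the Bloch--Kato exponential lands in $H^1_f$, which is contained in the kernel of the dual exponential, so $\exp^*\circ\exp=0$. This is in fact \emph{consistent} with the lemma, since $\ell_0$ vanishes at every finite-order character of $\Gamma^\ac$ (a finite-order $\chi$ sends $\log(\gamma)$ to $\log$ of a root of unity, i.e.\ to $0$); but it means your character-by-character computation, as written, never produces the factor $\ell_0$. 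To extract it you would have to work at the twists $\chi\kappa^j$ with $j\ne 0$, where the regulator interpolates the Bloch--Kato logarithm and one uses $\log\circ\exp=\mathrm{id}$, and this requires the twisting compatibilities of $\tilde{\Omega}^\epsilon_{V,1}$ and $\cL^\ac_T$, none of which you set up. Second, and more seriously for this paper, interpolation at a dense set of characters can at best identify the composite with $G\cdot\ell_0$ for some $G\in\sH_{\infty,\hat{F}_\infty}(\Gamma^\ac)^\times$; the entire point of the lemma is that $G$ lies in $\Lambda_{\cO_{\hat F_\infty}}(\Gamma^\ac)^\times$, since Corollary~\ref{cor:explicit-matrices} uses it to compute exact $p$-adic valuations. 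Your sketch defers this to ``delicate bookkeeping,'' but that bookkeeping is the whole difficulty: the normalization factors in the two interpolation formulas depend on the conductor of $\chi$ (through powers of $\varphi$, Gauss sums and $\Gamma$-factors), and showing that their ratio is interpolated by a single \emph{bounded} unit is not addressed. Note also that the claim that the Euler-type factors are units is not automatic in the non-ordinary case: with the paper's normalization $\varphi(e_\lambda)=\lambda p^{-1}e_\lambda$, the operator $1-\varphi$ has eigenvalues $1-\alpha/p$, $1-\beta/p$ of negative valuation, so it does not even preserve an integral lattice in $\Dcris(V_f)$.

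The paper's proof avoids interpolation altogether and is worth comparing. It first matches $\cL^\ac_T$, up to a unit, with the map $\Col^\epsilon_e$ of \cite[Definition~3.3]{CH22} by a one-time comparison of interpolation formulas. It then uses the adjunction $[\Col^\epsilon_e(z),\eta]_V=\langle z,\Omega^\epsilon_{V,1}(\eta\otimes e)\rangle_{F_\infty}$ together with Kobayashi's explicit reciprocity law \cite[Theorem~10.13]{Kob}, which evaluates the pairing $\langle\Omega^\epsilon_{V,1}(x),\Omega^\epsilon_{V,1}(\eta\otimes e)\rangle_{F_\infty}$ as $\ell_0$ times the pairing $[-,-]_V$ on $\Dcris$; nondegeneracy of $[-,-]_V$ then yields the exact identity, with the integral unit coming out directly. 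If you want to salvage your approach, you would essentially have to reprove this explicit reciprocity law via interpolation, which is substantially harder than invoking it.
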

	\begin{proof}
	Let $\Col^\epsilon_e:\HIw(K_\p^\ac,T)\rightarrow\sH_\infty(\Gamma^\ac)\otimes \Dcris(T)$ be the map defined in \cite[Definition~3.3]{CH22}, where $e$ is some fixed unit in $\Lambda(\Gamma^{\ac})$ (note that we are taking $F=\Qp$ in \textit{loc. cit.}). Upon comparing the interpolation formulae given in Theorem~3.4 of \textit{op. cit.} and \cite[Theorem~5.1]{CastellaHsieh}, we see that $\cL_T^\ac$ and $\Col^\epsilon_e$ agree up to a unit. Therefore, it is enough to study the composition $\Col^\epsilon_e$ and $\tilde\Omega_{V,1}^\epsilon$.
	
	Let $[-,-]_V$ and $\langle-,-\rangle_{F_\infty}$ be the pairings defined in \cite[\S3.3]{CH22}. Then (3,7) in \textit{op. cit.} says that
	\[
	[\Col_e^\epsilon (z),\eta]_V=\langle z,\Omega_{V,1}^\epsilon (\eta\otimes e)\rangle_{F_\infty}
	\]
	for all $z\in \HIw(K_\p^\ac,T)$ and $\eta\in \Dcris(T)$. Thus, given any $x\in \sH_\infty(\Gamma^\ac)\otimes \Dcris(T)$, we deduce from the explicit reciprocity law (see \cite[Theorem~10.13]{Kob}) 
	\begin{align*}
	    [\Col_e^\epsilon \circ \Omega_{V,1}^\epsilon(x),\eta]_V&=\langle \Omega_{V,1}^\epsilon(x),\Omega_{V,1}^\epsilon (\eta\otimes e)\rangle_{F_\infty}\\
	    &=\langle\ell_0 \Omega_{V,0}^\epsilon(x),\Omega_{V,1}^\epsilon (\eta\otimes e)\rangle_{F_\infty}\\
	    &=[ \ell_0 x,\eta\otimes e]_V\\
	    &=[ \ell_0e^\iota x,\eta]_V.
	\end{align*}
	(Note that the image of $\delta_{-1}$ in \textit{loc. cit.} is sent to the trivial element in $\Gamma^\ac$.) Therefore, the result follows from the non-degeneracy of the pairing $[-,-]_V$.
	\end{proof}
	
	\begin{corollary}
	    Let $e_{\alpha},e_{\beta}$ be a $\varphi$-eigenbasis of $\DD_{\cris}(V)$ given by
	    \begin{align*}
	        (e_\alpha,e_\beta) = (\omega,\varphi(\omega))S = (\omega,\varphi(\omega)) \begin{pmatrix}
	            1 & 1\\ -\alpha & -\beta\\
	        \end{pmatrix}
	    \end{align*}
	    (with $\varphi(e_{\lambda})=\lambda p^{-1}e_{\lambda}$ for $\lambda\in\{\alpha,\beta\}$). The matrix of $\cL_{T}^\ac$ with respect to the bases $(z_1,z_2)$ and $(e_{\alpha},e_{\beta})$ is given by
    \[\cY S^{-1}
\Mlog\cdot\begin{pmatrix}
    X&a'\\0&1
\end{pmatrix}.
    \]
    The matrix of $\tilde{\Omega}^{\epsilon}_{V,1}$ with respect to the same bases is given by 
    \begin{align*}
        \frac{u^\epsilon_T \ell_0}{\cY X}\begin{pmatrix}
            1 & -a' \\ 0 & X\\
        \end{pmatrix}
        M_{\log}^{-1}S.
    \end{align*}
	\end{corollary}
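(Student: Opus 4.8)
The plan is to establish the two matrix formulas in turn, the first by direct composition of the factorizations already set up, and the second by inverting the first and applying Lemma~\ref{lem:exp-log-composition}.

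First I would prove the formula for the matrix of $\cL_T^\ac$. By Corollary~\ref{cor:image-basis-col}, the Coleman maps $\uCol = (\Col_\sharp^\ac,\Col_\flat^\ac)$ send the basis $(z_1,z_2)$ of $\HIw(K_\fp,T)$ to $\cY(X,0)$ and $\cY(a',1)$; arranging these as columns, the matrix of $\uCol$ with respect to $(z_1,z_2)$ is $\cY\left(\begin{smallmatrix} X & a'\\ 0 & 1\end{smallmatrix}\right)$. By the anticyclotomic factorization $\cL_T^\ac(z) = (v_1\ v_2)\Mlog\left(\begin{smallmatrix}\Col_\sharp^\ac(z)\\ \Col_\flat^\ac(z)\end{smallmatrix}\right)$, and since $(e_\alpha,e_\beta) = (v_1,v_2)S$ means $(v_1,v_2) = (e_\alpha,e_\beta)S^{-1}$, we get $\cL_T^\ac(z) = (e_\alpha,e_\beta)\, S^{-1}\Mlog\left(\begin{smallmatrix}\Col_\sharp^\ac(z)\\ \Col_\flat^\ac(z)\end{smallmatrix}\right)$. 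Substituting the matrix of $\uCol$ for $z\in\{z_1,z_2\}$ yields precisely $\cY S^{-1}\Mlog\left(\begin{smallmatrix} X & a'\\ 0 & 1\end{smallmatrix}\right)$, as claimed. (Here I am using Lemma~\ref{lem:period-invertible} only implicitly, to know $\cY$ is a genuine scalar one can factor out.)

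For the second formula, Lemma~\ref{lem:exp-log-composition} gives $\cL_T^\ac\circ\tilde\Omega_{V,1}^\epsilon = u_T^\epsilon\,\ell_0\cdot\mathrm{id}$ on $\sH_\infty(\Gamma^\ac)\otimes\Dcris(T)$. Hence the matrix of $\tilde\Omega_{V,1}^\epsilon$ with respect to the bases $(z_1,z_2)$ and $(e_\alpha,e_\beta)$ is $u_T^\epsilon\,\ell_0$ times the inverse of the matrix of $\cL_T^\ac$, i.e.
\[
u_T^\epsilon\,\ell_0\,\left(\cY S^{-1}\Mlog\begin{pmatrix} X & a'\\ 0 & 1\end{pmatrix}\right)^{-1}
= \frac{u_T^\epsilon\,\ell_0}{\cY}\begin{pmatrix} X & a'\\ 0 & 1\end{pmatrix}^{-1}\Mlog^{-1}S.
\]
Since $\det\left(\begin{smallmatrix} X & a'\\ 0 & 1\end{smallmatrix}\right) = X$, its inverse is $\tfrac{1}{X}\left(\begin{smallmatrix} 1 & -a'\\ 0 & X\end{smallmatrix}\right)$, and the expression becomes $\dfrac{u_T^\epsilon\,\ell_0}{\cY X}\left(\begin{smallmatrix} 1 & -a'\\ 0 & X\end{smallmatrix}\right)\Mlog^{-1}S$, which is the stated formula.

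The main point to be careful about — the only step that is more than bookkeeping — is the identification of the domain of $\tilde\Omega_{V,1}^\epsilon$ with $\sH_\infty(\Gamma^\ac)\otimes\Dcris(T)$ in the correct coordinates, so that the composition identity of Lemma~\ref{lem:exp-log-composition} genuinely says that the two matrices are mutually inverse up to the scalar $u_T^\epsilon\ell_0$; one must check that the $\varphi$-eigenbasis $(e_\alpha,e_\beta)$ used to write down the matrix of $\tilde\Omega_{V,1}^\epsilon$ is the same as the one appearing on the target side of $\cL_T^\ac$, and that the twist by $\iota$ (the $e^\iota$ appearing in the proof of Lemma~\ref{lem:exp-log-composition}, together with $\delta_{-1}\mapsto 1$ in $\Gamma^\ac$) does not introduce an extra sign or variable substitution. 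Granting the conventions fixed in the previous lemma and corollary, everything else is a two-by-two matrix inversion.
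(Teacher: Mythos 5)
Your proposal is correct and follows essentially the same route as the paper: read off the matrix of $\uCol$ on $(z_1,z_2)$ from Corollary~\ref{cor:image-basis-col}, insert it into the factorization $\cL_T^\ac = (v_1\ v_2)\overline{M}_{\log}\uCol$ together with the change of basis $(v_1,v_2)=(e_\alpha,e_\beta)S^{-1}$, and then obtain the matrix of $\tilde{\Omega}^\epsilon_{V,1}$ by inverting via Lemma~\ref{lem:exp-log-composition}. Your explicit $2\times 2$ inversion and your remark about checking that the $\iota$-twist and $\delta_{-1}$ normalization introduce no extra factor are exactly the details the paper leaves implicit.
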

    \begin{proof}
        By Corollary \ref{cor:image-basis-col}, we have a basis $z_1,z_2$ of $H^1_\Iw(K_\fp,T)$ such that
        \begin{align*}
            (\cL_T^\ac(z_1),\cL_T^\ac(z_2)) = (\omega, \varphi(\omega))M_{\log}\cY\begin{pmatrix}
                X & a'\\
                0 & 1\\
            \end{pmatrix}.
        \end{align*}
        The affirmation regarding $\cL_T^{\ac}$ now follows from the change of variable formula in the definition of $e_\alpha,e_\beta$. Taking the inverse of $\cL^\ac_T$ in Lemma \ref{lem:exp-log-composition} gives the matrix for $\tilde{\Omega}^\epsilon_{V,1}$.
    \end{proof}
    
    \begin{corollary}\label{cor:explicit-matrices}
        For large enough even integers $m$, the specialization of $\tilde{\Omega}^\epsilon_{V,1}$ at $\fP_m=(\Theta_m(X))$ has $p$-adic valuation matrix
        \begin{align*}
            \begin{pmatrix}
                -m\cdot\ord_p(\alpha) + \displaystyle\sum_{i=1}^{m/2}\frac{1}{p^{2i-1}} &  \displaystyle-m\cdot\ord_p(\beta) + \sum_{i=1}^{m/2}\frac{1}{p^{2i-1}} \\
                 \displaystyle-m\cdot\ord_p(\alpha) + \sum_{i=1}^{ m/2}\frac{1}{p^{2i-1}}+\frac{1}{p^{m-1}(p-1)} & \displaystyle -m\cdot\ord_p(\beta) + \sum_{i=1}^{m/2}\frac{1}{p^{2i-1}}+ \frac{1}{p^{m-1}(p-1)}\\
            \end{pmatrix}
        \end{align*}
    \end{corollary}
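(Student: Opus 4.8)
The plan is to combine the explicit matrix description of $\tilde\Omega^\epsilon_{V,1}$ from the previous corollary with the valuation of the logarithm matrix $M_{\log}(\epsilon_n)$ computed in Lemma~\ref{lem:log-matrix}, then specialize everything at the height-one prime $\fP_m = (\Theta_m(X))$. Recall that the corresponding character $\chi_{\fP_m}$ sends the topological generator $\gamma$ to $\zeta_{p^m}$, so that the variable $X$ specializes to $\epsilon_m = \zeta_{p^m}-1$, which has $p$-adic valuation $\frac{1}{p^{m-1}(p-1)}$. Since $m$ is assumed to be even, I will use the even-case entries of the valuation matrices throughout. The key input is the formula
\begin{align*}
\tilde\Omega^\epsilon_{V,1} = \frac{u^\epsilon_T\,\ell_0}{\cY X}\begin{pmatrix} 1 & -a' \\ 0 & X \end{pmatrix} M_{\log}^{-1} S
\end{align*}
with respect to the bases $(z_1,z_2)$ and $(e_\alpha,e_\beta)$. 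First I would record that $u^\epsilon_T$ and $\cY$ are units (Lemma~\ref{lem:period-invertible}), hence contribute $0$ to all valuations, and that $\ell_0 = \log(\gamma)/\log(\kappa(\gamma))$ specialized at $\chi_{\fP_m}$ has a well-understood valuation that will ultimately cancel against a factor of $X^{-1}$ — this bookkeeping needs care.

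\textbf{Carrying out the computation.} Next I would compute $M_{\log}(\epsilon_m)^{-1}$ from Lemma~\ref{lem:log-matrix}: since $M_{\log}(\epsilon_m) = (\alpha-\beta)^{-1}S\,\mathrm{diag\text{-ish}}$ with the off-basis structure $\begin{pmatrix} -s_1/\beta^m & -s_2/\beta^m \\ s_1/\alpha^m & s_2/\alpha^m \end{pmatrix}$, its inverse (up to the unit $\alpha-\beta$) will reintroduce the matrix $S^{-1}$ and a diagonal matrix with entries $\alpha^m/s_1$-type and $\beta^m/s_2$-type quantities, where $\ord_p(s_1) = \sum_{1\le i\le m/2} p^{-2i+1}$ (for $m$ even, no $v\boldsymbol{1}$ term since $2\mid m$) and $\ord_p(s_2) = v + \sum_{1\le i < m/2}p^{-2i}$. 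Then I would multiply out $\begin{pmatrix} 1 & -a' \\ 0 & X \end{pmatrix} M_{\log}^{-1} S$, observing that the rightmost $S$ cancels the $S^{-1}$ coming from $M_{\log}^{-1}$, leaving a product of the row-operation matrix, a diagonal matrix built from $\alpha^m/s_1$ and $\beta^m/s_2$, and the conjugating factor. The $\frac{1}{X}$ prefactor and the bottom-row factor $X$ then interact so that the first row picks up $-\ord_p(X) = -\frac{1}{p^{m-1}(p-1)}$ relative to the second, which matches the stated matrix where the second row exceeds the first by exactly $\frac{1}{p^{m-1}(p-1)}$. Tracking the $\alpha^m, \beta^m$ factors against $s_1, s_2$ and noting $-m\cdot\ord_p(\alpha) + \sum_{i=1}^{m/2}p^{-2i+1}$ is precisely $\ord_p(\alpha^m/s_1)$ up to sign conventions (and similarly for $\beta$, using $v = \ord_p(a_p)$ and $\ord_p(\alpha)+\ord_p(\beta) = 1$, $\ord_p(\alpha\beta)=1$), I would verify each of the four entries equals the claimed value.

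\textbf{Specializing and concluding.} Finally I would feed $X \mapsto \epsilon_m$ into the combined matrix, use that $\ell_0$ evaluated at $\chi_{\fP_m}$ and divided by the unit $\cY X$ contributes a valuation that cancels the extra $p^{-(m-1)(p-1)^{-1}}$ discrepancy in all entries except for the shift between rows (this is exactly the asymmetry that produces the $+\frac{1}{p^{m-1}(p-1)}$ in the bottom row), and assemble the $2\times 2$ valuation matrix. The hypothesis that $m$ is \emph{even} is essential because it selects which of the two cases in Lemma~\ref{lem:log-matrix} applies and ensures the sums run to $m/2$ as written; the hypothesis that $m$ is \emph{large} is needed both so that $\Phi_{p^{i+1}}(\epsilon_m) = p$ for $i \ge m$ (making $M_{\log}(\epsilon_m)$ a finite product) and so that $\fP_m$ avoids the bad set $\Sigma$ and $\chi_{\fP_m}(\mathscr{L})\ne 0$. \textbf{The main obstacle I expect} is the sign and normalization bookkeeping — keeping straight the roots of $X^2 - a_p X + p$ versus $X^2 - a_p X + p^{2r-1}$ (here $r=1$ so they coincide), the Lubin--Tate normalization of $\kappa$, the relative-position identification of $\Gamma^{\rm ac}$ inside $\Z_p^2$ from Remark~\ref{rmk:relative-position}, and the change of variables $Y \mapsto (1+X)^{-1}-1$ — so that the valuation of $\ell_0/(\cY X)$ is computed correctly and the four entries land exactly as stated rather than off by a fixed additive constant; a careful separate lemma isolating $\ord_p$ of the $\ell_0$-contribution would be the cleanest way to handle this.
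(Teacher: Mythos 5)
Your overall strategy matches the paper's, but there are two genuine gaps, the first of which is fatal as written. You propose to ``compute $M_{\log}(\epsilon_m)^{-1}$'' and then specialize. But $M_{\log}(\epsilon_m)$ is \emph{singular}: by Lemma~\ref{lem:log-matrix} it equals $(\alpha-\beta)^{-1}S$ times the rank-one matrix $\begin{pmatrix} -s_1/\beta^m & -s_2/\beta^m \\ s_1/\alpha^m & s_2/\alpha^m \end{pmatrix}$, whose determinant is identically zero (consistent with $\det(M_{\log})\sim \ell_0/X$ vanishing at $X=\epsilon_m$, since $\log(\zeta_{p^m})=0$). So there is no ``diagonal matrix with entries $\alpha^m/s_1$-type and $\beta^m/s_2$-type'' to invert, and your hope that the valuation of $\ell_0/(\cY X)$ ``cancels the discrepancy'' entry-by-entry cannot be realized by valuation bookkeeping alone. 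The step you are missing is the exact algebraic identity from \cite[Proposition~2.5]{BL-mathZ} that $\det(M_{\log})$ and $\ell_0/X$ differ by a \emph{unit} in $\Lambda(\Gamma^{\ac})$, which converts $\frac{\ell_0}{X}M_{\log}^{-1}S$ into (a unit times) the \emph{adjugate} of $(\alpha-\beta)S^{-1}M_{\log}$; the adjugate is polynomial in the entries and specializes harmlessly at $\epsilon_m$ to $\begin{pmatrix} s_2/\alpha^m & s_2/\beta^m \\ -s_1/\alpha^m & -s_1/\beta^m \end{pmatrix}$. This is the linchpin of the proof, not a normalization nuisance to be deferred to a separate lemma.

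The second gap: once one multiplies on the left by $\begin{pmatrix} 1 & -a' \\ 0 & \epsilon_m \end{pmatrix}$, the first row of the product has entries $(s_2+a's_1)/\alpha^m$ and $(s_2+a's_1)/\beta^m$, so \emph{both} rows of the final valuation matrix are governed by $\ord_p(s_1)$ alone — which is why only $\sum_{i=1}^{m/2}p^{-2i+1}$ appears in the statement and $\ord_p(s_2)$ does not. To see that $\ord_p(s_2+a's_1)=\ord_p(s_1)$ you must check $\ord_p(s_2)>\ord_p(s_1)$ for large even $m$, and this is precisely where the running hypothesis $\ord_p(a_p)\ge \frac{1}{p+1}$ enters (via $v+\frac{1-1/p^m}{p^2-1}>\frac{p-1/p^{m-1}}{p^2-1}$). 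Your sketch tracks $\alpha^m,\beta^m$ against $s_1,s_2$ separately, which would put $\ord_p(s_2)$ into some entries and give the wrong matrix; it also never invokes hypothesis~(4) of Theorem~\ref{thmA}, which this corollary genuinely needs.
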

    \begin{proof}
        By \cite[Proposition~2.5]{BL-mathZ}, $\det(M_{\log})$ and $ \ell_0/X$ differ by a unit in  $\Lambda(\Gamma^\ac)$. It follows that $\frac{\ell_0}{X} M_{\log}^{-1}S$ is the adjugate matrix of $\det(S)S^{-1}M_{\log} = (\alpha-\beta)S^{-1}M_{\log}$, up to a unit. By Lemmas \ref{lem:log-matrix} and  \ref{lem:period-invertible}, we see that, up to a unit, the matrix of $\tilde{\Omega}^\epsilon_{V,1}$ given in Corollary~\ref{cor:explicit-matrices} specialized at $\Theta_m$, is of the form
        \begin{align*}
            \begin{pmatrix}
                1 & -a' \\ 0 & \epsilon_m
            \end{pmatrix}
            \begin{pmatrix}
                s_2/\alpha^m & s_2/\beta^m\\
                -s_1/\alpha^m & -s_1/\beta^m
            \end{pmatrix}.
        \end{align*}
        It follows from our assumption that $v\ge\frac{1}{p+1}$, for an even integer $m$ that is sufficiently large, 
        \begin{align*}
            \ord_p(s_2) = v+ \sum_{1\le i<m/2}\frac{1}{p^{2i}} = v + \frac{1-1/p^{m}}{p^2-1} > \frac{p-1/p^{m-1}}{p^2-1} = \ord_p(s_1).
        \end{align*}
        Since $a'$ is a $p$-adic unit, we have $\ord_p(s_2+a's_1) = \ord_p(s_1)$. Hence, the result follows.
    \end{proof}
\subsection*{Data availability statement}
Data sharing not applicable to this article as no datasets were generated or analysed during the current study.

\subsection*{Conflict of interest statement}
The authors have no conflicts of interest to declare that are relevant to the content of this article.

\bibliographystyle{amsalpha}
\bibliography{references}
\end{document}